\newtheorem{thmI}{Theorem}
\numberwithin{equation}{section}
\newtheorem{thm}[equation]{Theorem}
\newtheorem{prop}[equation]{Proposition}
\newtheorem{cor}[equation]{Corollary}
\newtheorem{lem}[equation]{Lemma}
\newtheorem{conj}[equation]{Conjecture}
\theoremstyle{definition}
\newtheorem{ex}[equation]{Example}
\newtheorem{rem}[equation]{Remark}
\renewcommand{\dim}{\operatorname{\mathsf{dim}}}
\renewcommand{\bmod}{\operatorname{\,\,\mathsf{mod}}\,\,}
\newcommand\ind{\operatorname{\mathsf{ind}}}
\renewcommand\exp{\operatorname{\mathsf{exp}}}
\newcommand\hh{\operatorname{\mathbb{H}}}
\newcommand\N{\operatorname{N}}
\newcommand{\car}{\mathsf{char}}
\newcommand{\vf}{\varphi}
\newcommand{\mg}[1]{{#1}^{\times}}
\newcommand{\sq}[1]{{#1}^{\times 2}}
\newcommand{\scg}[1]{\mg{#1}/\sq{#1}}
\newcommand{\s}{\sigma}
\newcommand{\nat}{\mathbb{N}}
\newcommand{\la}{\langle}
\newcommand{\ra}{\rangle}
\newcommand{\lla}{\la\!\la}
\newcommand{\rra}{\ra\!\ra}      
\newcommand{\an}{\mathsf{an}}
\renewcommand{\leq}{\leqslant}
\renewcommand{\geq}{\geqslant}
\newcommand{\I}{\mathsf{I}}
\newcommand{\mc}{\mathcal}
\renewcommand{\N}{\mathsf{N}}
\newcommand{\disc}{\mathsf{disc}}
\newcommand{\G}{\mathsf{G}}
\renewcommand{\H}{\mathsf{H}}
\newcommand{\Hyp}{\mathsf{Hyp}}
\renewcommand{\setminus}{\smallsetminus}
\renewcommand{\dim}{\mathsf{dim}}
\renewcommand{\leq}{\leqslant}
\renewcommand{\geq}{\geqslant}
\newcommand{\Sp}{\mathsf{Sp}}
\newcommand{\wi}{\mathsf{i}}
\newcommand{\mf}{\mathfrak}
\newcommand{\mfm}{\mf{m}}
\newcommand{\zz}{\mathbb{Z}}
\newcommand{\cc}{\mathbb{C}}
\newcommand{\sign}{\mathsf{sign}}
\renewcommand{\bmod}{\,\mathsf{mod}\,}
\renewcommand{\setminus}{\smallsetminus}
\newcommand{\bigperp}{%
  \mathop{\mathpalette\bigp@rp\relax}%
  \displaylimits
}
\newcommand{\bigp@rp}[2]{%
  \vcenter{
    \m@th\hbox{\scalebox{\ifx#1\displaystyle2.1\else1.5\fi}{$#1\perp$}}
  }%
}
\title[Rational connectedness for proper projective similitudes]{Rational connectedness for\\ groups of proper projective similitudes}
\date{26 June, 2025}
\author{M.~Archita}
\author{Karim Johannes Becher}
\address{University of Antwerp, Department of Mathematics, Antwerp, Belgium.}
\email{karimjohannes.becher@uantwerpen.be}
\email{archita.mondal@uantwerpen.be}
\begin{document}

\maketitle

\begin{abstract}
    For a quadratic form $\varphi$ over a field of characteristic different from $2$, we study whether its group of proper projective similitudes ${\bf PSim}^+(\vf)$ is rationally connected (i.e.~$R$-trivial).
    We obtain new sufficient conditions in terms of structure properties of $\varphi$. We further provide new examples of quadratic forms $\varphi$ belonging to a given power of the fundamental ideal in the Witt ring and such that ${\bf PSim}^+(\vf)$ is not rationally connected. 

\medskip\noindent
{\sc Keywords:} 
Classical adjoint  algebraic group, stably rational, $R$-equivalence, quadratic form, hyperbolic,  height, splitting pattern, Pfister form, multiquadratic field extension, henselian valuation, cohomological dimension

\medskip\noindent
{\sc Classification (MSC 2020):} 11E04, 
11E57, 
11E81, 
14E08, 
20G15 
\end{abstract}

\section{Introduction}

Determining whether a variety is rational is a central problem in algebraic geometry.
In this context, the notion of \emph{rational equivalence} (or \emph{$R$-equivalence}) was introduced by Y.~Manin around 1972 (see \cite[Chapter II, \S$14$]{Man86}). 
A systematic study of this notion for linear algebraic groups was then initiated by J.-L.~Colliot-Thélène and J.-J.~Sansuc \cite{CTS77}.
Around 1996, A.~Merkurjev \cite{Mer96} and Ph.~Gille \cite{Gil97} studied rational connectedness (also called \emph{$R$-triviality}) for the group of proper projective similitudes of a quadratic form or, more generally, an involution on a central simple algebra.
Merkurjev showed that rational connectedness is in this case determined by the quotient of two subgroups of the multiplicative group of the underlying field.
This led to the first examples of adjoint classical groups of type $D$ which are not rational.

In the present article, we investigate  rational connectedness for certain adjoint groups of type $D$, namely for the groups of proper projective similitudes of quadratic forms. 
We use standard notation and terminology in quadratic form theory. 
Typical references are \cite{EKM08} and \cite{Lam05}.

In this article, the term \emph{quadratic form} always refers to a nondegenerate quadratic form.
We use the sign $\simeq$ for isometry.
Let $K$ be a field of characteristic different from $2$.
For $n\in\nat$ and $a_1,\dots,a_n\in\mg{K}$, we denote by $\lla a_1,\dots,a_n\rra$ the $n$-fold Pfister form $\la 1,-a_1\ra\otimes\dots\otimes\la 1,-a_n\ra$ over $K$.
Let $\vf$ be a quadratic form over $K$. We denote by $\dim(\vf)$ its \emph{dimension} (number of variables).
If $n=\dim(\vf)$,
Then $\vf\simeq \la a_1,\dots,a_n\ra$ for some $a_1,\dots,a_n\in\mg{K}$.
We denote by $\disc(K)$ the \emph{discriminant of $\vf$}, given as the class $(-1)^{\binom{n}{2}}a_1\dots a_n\sq{K}$ in the square-class group $\scg{K}$ for an arbitrary diagonalisation $\la a_1,\dots,a_n\ra$ of $\vf$.
We further denote by $\mc{C}(\vf)$ the \emph{Clifford algebra of $\vf$} and by $\mc{C}_0(\vf)$
 its \emph{even part}.
We denote by  ${\bf PSim}(\varphi)$ the group of projective similitudes of $\varphi$, viewed as a linear algebraic group, and by ${\bf PSim}^+(\varphi)$ its connected component of the identity.
The elements of the latter are called \emph{proper projective similitudes of~$\varphi$}.

If $\dim(\vf)$ is odd, then  ${\bf PSim}^+(\vf)$ is rational; see \cite[p.~200]{Mer96}.
Therefore the study focuses on forms of even dimension.
If $\dim(\vf)<6$, then ${\bf PSim}^+(\vf)$ is rational, by \cite[Prop.~5]{Mer96}.
The first interesting cases occur in dimension $6$. 
When $\dim(\vf)=6$, \cite[Theorem 3]{Mer96} gives that
${\bf PSim}^+(\vf)$ is stably rational if and only if it is rationally connected, if and only if $\mc{C}_0(\vf)$ is a division algebra.
Using this, one finds examples of $6$-dimensional quadratic forms $\vf$ over many fields $K$ for which ${\bf PSim}^+(\vf)$ is not rationally connected.

The study of rational connectedness of ${\bf PSim}^+(\varphi)$ relies crucially on Merkurjev's characterization in terms of similarity factors of $\vf$ and norms in finite field extensions where $\varphi$ becomes hyperbolic.
This characterization will be revisited in \Cref{S:Merk}.

As a first application we obtain the following variation to an observation by
Merkurjev's in \cite[Prop.~7]{Mer96}, namely that ${\bf PSim}^+(\vf)$ is stably rational and hence rationally connected if $\varphi$ is a tensor product of a Pfister form with a quadratic form of odd dimension.

\begin{thmI}[\Cref{no-odd-zd-ratcon}]
Let $\psi$ be a quadratic form of odd dimension over $K$.
Then ${\bf PSim}^+(\vf\otimes\psi)$ is rationally connected if and only if ${\bf PSim}^+(\vf)$ is rationally connected.
\end{thmI}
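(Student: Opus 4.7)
The plan is to apply the characterisation of rational connectedness of $\PSim^+$ due to Merkurjev, to be recalled in \Cref{S:Merk}: namely, $\PSim^+(\vf)$ is rationally connected if and only if every similarity factor $a\in\mg{K}$ of $\vf$ lies in the subgroup of $\mg{K}$ generated by $\sq{K}$ together with the norms $N_{L/K}(\mg{L})$, where $L/K$ runs over the finite field extensions such that $\vf_L$ is hyperbolic. The goal is then to show that both pieces of data attached to $\vf$---the group of similarity factors, and the set of finite extensions over which the form becomes hyperbolic---coincide with those attached to $\vf\otimes\psi$.

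The key input is the following standard consequence of the Arason--Pfister Hauptsatz: \emph{for any quadratic form $\vf'$ and any odd-dimensional form $\psi$ over $K$, the form $\vf'\otimes\psi$ is hyperbolic if and only if $\vf'$ is hyperbolic}. The non-trivial direction I would prove by decomposing $\psi = (\dim\psi)\cdot\la 1\ra + \eta$ in $\W(K)$ with $\eta\in \I(K)$. As $\dim\psi$ is odd and $2\vf'\in \vf'\cdot\I(K)$, this gives $\vf'\otimes\psi \equiv \vf' \pmod{\vf'\cdot\I(K)}$. Hence if $\vf'\otimes\psi$ vanishes in $\W(K)$ then $\vf'\in\vf'\cdot\I(K)$; writing $\vf' = \vf'\alpha$ with $\alpha\in \I(K)$ and iterating yields $\vf'\in\bigcap_{k\geq 0}\I^k(K)$, so $\vf'$ is hyperbolic by the Hauptsatz.

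Applying this fact over every finite extension $L/K$ (note that $\psi_L$ remains odd-dimensional) shows that the finite extensions over which $\vf$ becomes hyperbolic are exactly those over which $\vf\otimes\psi$ does, so the norm subgroup generated from splitting extensions is the same for $\vf$ and for $\vf\otimes\psi$. Applying the same fact with $\lla a\rra\otimes\vf$ in place of $\vf'$ shows that $\lla a\rra\otimes\vf$ is hyperbolic if and only if $\lla a\rra\otimes\vf\otimes\psi$ is; by Witt cancellation this is exactly the statement that $a\in\mg{K}$ is a similarity factor of $\vf$ if and only if it is a similarity factor of $\vf\otimes\psi$. The theorem then follows at once from Merkurjev's criterion.

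The main technical step is the odd-dimensional hyperbolicity lemma above; once it is in hand, the rest is a direct comparison of the two invariants appearing in the criterion of \Cref{S:Merk}, and no real obstacle arises.
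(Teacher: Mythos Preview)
Your approach matches the paper's: both use that odd-dimensional forms are non-zero-divisors in the Witt ring (the paper cites \cite[Chap.~VIII, Cor.~8.5]{Lam05}, while you reprove it via the Hauptsatz) to identify $\G$ and $\H$ for $\vf$ and $\vf\otimes\psi$, and then conclude by Merkurjev's criterion. One correction: as you state it, Merkurjev's criterion concerns only the base field $K$, but the actual statement (\Cref{Mer:T1}) requires $\G(\vf_{K'})=\H(\vf_{K'})$ for \emph{every} field extension $K'/K$; your argument carries over verbatim since $\psi_{K'}$ remains odd-dimensional, but this step should be made explicit.
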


One central aim of this article is to obtain new sufficient conditions for rational connectedness of ${\bf PSim}^+(\vf)$.
In order to state our results, we need to recall a few terms.

We denote by $\vf_\an$ the \emph{anisotropic part} and by $\wi(\vf)$ the Witt index of $\vf$. Hence $\vf\simeq \vf_\an\perp \wi(\vf)\times\hh$ where $\hh$ denotes the hyperbolic plane.
For a field extension $L/K$, we denote by $\vf_L$ the quadratic form obtained from $\vf$ by extending scalars to $L$.
The set
$\Sp(\vf)=\{\wi(\vf_L)\mid L/K\mbox{ field extension}\}\subseteq\nat$
is called the \emph{splitting pattern of $\vf$}, and the natural number 
 $h(\vf)=|\Sp(\vf)|-1$
is called the \emph{height of $\vf$}; see \cite[Sect.~25]{EKM08}.
Note that $h(\vf)\leq \frac{1}2\dim(\vf)$, and further $h(\vf)\leq 1$ if and only if $\vf$ is Witt equivalent to a non-trivial scaled Pfister form; see \cite[Prop.~25.6]{EKM08}.

In \Cref{S:splitting} we establish the following result:

\begin{thmI}[\Cref{h2ratcon}, \Cref{2Pfister-ratcon} \& \Cref{no-odd-zd-ratcon}]\label{T2}
    Assume that either $h(\varphi)\leq 2$ or $\varphi$ is Witt equivalent to a sum of two scaled Pfister forms of different dimension.
Then ${\bf PSim}^+(\vf)$ is rationally connected.
More generally, ${\bf PSim}^+(\vf\otimes\psi)$ is rationally connected for any odd-dimensional quadratic form~$\psi$.
\end{thmI}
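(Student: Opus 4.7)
The plan is first to reduce the last assertion of the theorem to the untensored case via \Cref{no-odd-zd-ratcon} (Theorem I above), which ensures that tensoring with an odd-dimensional quadratic form $\psi$ does not affect rational connectedness of the group of proper projective similitudes. It thus suffices to show that ${\bf PSim}^+(\vf)$ itself is rationally connected under either of the two structural hypotheses. In both cases the approach runs through Merkurjev's criterion recalled in \Cref{S:Merk}, which expresses rational connectedness of ${\bf PSim}^+(\vf)$ as the triviality of a certain quotient of $\mg K$: one must verify that every similarity factor of $\vf$ lies in the subgroup generated by norms $N_{L/K}(\mg L)$ over finite extensions $L/K$ with $\vf_L$ hyperbolic.

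For the case $h(\vf)\leq 2$, I would argue via the generic splitting tower. When $h(\vf)\leq 1$, the form $\vf$ is Witt equivalent to a scaled Pfister form, where multiplicativity of Pfister forms identifies the similarity factors with the nonzero represented values and leads directly to the required norm description. When $h(\vf)=2$, the anisotropic part of $\vf$ over its function field $K(\vf)$ has height one and is thus Witt equivalent to a scaled Pfister form. I would lift a similarity factor of $\vf$ to this height-one situation over $K(\vf)$, apply the Pfister-form argument there, and then descend to $K$ using a transfer or norm computation associated with a suitable quadratic subform of $\vf$ that becomes hyperbolic over the function field.

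For the Pfister-sum case $\vf\sim a\pi_1\perp b\pi_2$ with $\dim\pi_1\neq\dim\pi_2$, I would pass to the function fields of $\pi_1$ and of $\pi_2$, over each of which one of the two Pfister components becomes hyperbolic while the other remains a scaled Pfister form of the smaller or larger dimension. The dimension inequality is essential to prevent Witt cancellation between the two summands, which would otherwise hide the discriminant information needed for the analysis. Combining the height-one argument over each function field with the standard norm formulas for function fields of Pfister forms should then yield a factorisation of any similarity factor of $\vf$ as a product of norms from finite extensions that genuinely hyperbolise $\vf$ over $K$.

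The main obstacle I anticipate is the $h(\vf)=2$ situation, since $\vf$ itself need not become hyperbolic at any obvious single extension of $K$; descending the height-one residue back to $K$ requires combining norms from $K(\vf)$ with norms from the splitting extension of the residual Pfister form, and one must check carefully that the composite step produces extensions which actually hyperbolise $\vf$ over $K$ rather than merely isotropise it. The Pfister-sum case should be more mechanical once the dimension inequality is used to cleanly separate the two components.
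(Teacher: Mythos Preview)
Your reduction to the untensored case via \Cref{no-odd-zd-ratcon} is correct and matches the paper. The problem lies in how you propose to handle the two main cases: in both you reach for function fields of quadrics or of Pfister forms, and this is where the argument breaks down. The function field $K(\vf)$ (or $K(\pi_i)$) is a purely transcendental-then-quadratic extension of infinite degree over $K$, so norms from finite extensions of $K(\vf)$ land in $\H(\vf_{K(\vf)})$, not in $\H(\vf)$ over $K$. There is no descent mechanism available here: the group $\H(\vf)$ is built from norms of \emph{finite} extensions of $K$, and nothing in the setup lets you trade a hyperbolising extension of $K(\vf)$ for one of $K$. Your own closing paragraph correctly flags this obstacle, but it is fatal rather than merely technical.

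The paper avoids function fields entirely. For $h(\vf)\leq 2$ (\Cref{h2ratcon}), the argument stays over $K$: given $a\in\G(\vf)$, one finds a binary subform of $\vf_\an$ on which $\lla a\rra$ becomes isotropic, yielding a quadratic extension $L=K(\sqrt{d})$ with $a\in\N_{L/K}^\ast$ and $\wi(\vf_L)>\wi(\vf)$. Writing $\vf_\an\simeq\lla d\rra\otimes\vartheta\perp\psi$ with $\psi_L$ anisotropic, one has $a\in\G(\psi)$ and repeats the step to get a second quadratic extension $L'/K$. The biquadratic norm trick then gives $a\in\sq{K}\cdot\N_{M/K}^\ast$ for $M=L\otimes_K L'$, and $h(\vf)\leq 2$ forces $\vf_M$ hyperbolic. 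For the Pfister-sum case (\Cref{2Pfister-ratcon}), the key step you are missing is an application of the Arason--Pfister Hauptsatz: the dimension inequality guarantees $\G(\pi\perp\rho)=\G(\pi)\cap\G(\rho)$, after which one applies the Pfister-form norm description separately to $\pi$ and $\rho$ \emph{over $K$ itself}, combining again via the biquadratic trick. No passage to function fields is needed or helpful.
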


 \Cref{T2} encompasses two special cases treated by R.~Parimala, J.-P.~Tignol and R.M.~Weiss in \cite[Theorem 1.1]{PTW12}, namely when $\vf$ has trivial discriminant and either $\dim(\vf)=8$ and $\mc{C}(\vf)$ is equivalent to a quaternion algebra, or  
$\dim(\vf)=12$ and $\mc{C}(\vf)$ is split.
Either of the two implies that $h(\vf)\leq 2$.
When $\dim(\vf)=6$, then it follows by \cite[Theorem 3]{Mer96} that \Cref{T2}
 gives precisely the cases where ${\bf PSim}^+(\vf)$ is rationally connected.
One may ask whether this is equally the case when $\dim(\vf)=8$.

One may speculate whether it is possible to extend the list in \Cref{T2} to a full characterization of rational connectedness of ${\bf PSim}^+(\vf)$ in terms of structure properties of $\vf$.
Note that there is no confirmed case of a quadratic form $\vf$ which is a  tensor product of two even-dimensional forms and for which ${\bf PSim}^+(\vf)$ is not rationally connected.

Let $n\in\nat$. We denote by $\I^nK$ the $n$th power of the fundamental ideal $\I K$ of the Witt ring of $K$.
We say that the quadratic form $\vf$ \emph{belongs to} $\I^nK$ and we write $\vf\in\I^n K$ if its Witt equivalence lies in $\I^n K$.
Note that $\vf\in\I^2K$ if and only if $\vf$ has even dimension and trivial discriminant. 

In \Cref{S:fui-conditions}, extending previous results from \cite{KP08} and \cite{AP22} on the case $n=2$, we show the following.

\begin{thmI}[\Cref{T:In-G-H}]\label{T5}
    Assume that $\I^{n+1} K(\sqrt{-1})=0$.
    Let $\varphi\in\I^nK$ and $a\in\mg{K}$.
    Then $a\varphi\simeq \varphi$ if and only if $a$ is a norm in some finite $2$-extension $L/K$ such that $\varphi_L$ is hyperbolic.
\end{thmI}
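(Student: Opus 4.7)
The proof splits into two directions. The easy direction is a direct consequence of Knebusch's norm principle: if $a = \N_{L/K}(b)$ with $\varphi_L$ hyperbolic, then $b$ is a similarity factor of $\varphi_L$ (since every nonzero scalar is a similarity factor of a hyperbolic form), so $\N_{L/K}(b) = a$ is a similarity factor of $\varphi$, giving $a\varphi \simeq \varphi$.

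For the hard direction, the plan is to induct on $\dim(\varphi_\an)$, starting from the hyperbolic base case (where $L = K$ works trivially). Given $a\varphi \simeq \varphi$, equivalently $\lla a\rra \otimes \varphi = 0$ in $\W K$, I would construct a quadratic extension $K_1 = K(\sqrt c)/K$ with $\wi(\varphi_{K_1}) > \wi(\varphi)$ together with an element $b_1 \in \mg{K_1}$ such that $b_1 \varphi_{K_1} \simeq \varphi_{K_1}$ and $\N_{K_1/K}(b_1)$ equals $a$ modulo similarity factors of $\varphi$. Since $K_1(\sqrt{-1})$ is a finite $2$-extension of $K(\sqrt{-1})$, the hypothesis $\I^{n+1} K(\sqrt{-1}) = 0$ is preserved for $K_1$, so the inductive hypothesis applies to the pair $(\varphi_{K_1}, b_1)$ and yields a finite $2$-extension $L/K_1$ with $\varphi_L$ hyperbolic and $b_1 \in \N_{L/K_1}(\mg L)$. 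Composition of norms then gives $a \in \N_{L/K}(\mg L) \cdot \G(\varphi)$, and the residual similarity factor is absorbed by a further iteration of the same argument applied to a smaller witness.

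The main obstacle is the construction of $K_1$ with a trackable norm relation. Using $\lla a\rra \otimes \varphi = 0$ and $\varphi \in \I^n K$, I would isolate, modulo $\I^{n+1} K$, an $n$-fold Pfister component of $\varphi_\an$ whose similarity group contains $a$. This extraction is where the hypothesis $\I^{n+1} K(\sqrt{-1}) = 0$ plays its essential role, by forcing the relevant Arason-type cohomological obstructions to vanish. A representation and linkage argument of Arason--Pfister type then produces the desired $c \in \mg K$ so that passage to $K_1 = K(\sqrt c)$ simultaneously raises the Witt index of $\varphi$ and expresses $a$ (modulo $\G(\varphi)$) as a norm from $K_1$. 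Generalising this construction from the explicit handling of Albert forms in \cite{KP08} and \cite{AP22}, where $n = 2$, to arbitrary $n$ via the abstract structure of forms in $\I^n K$ is the principal technical challenge.
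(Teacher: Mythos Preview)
Your easy direction via the norm principle is fine. The hard direction has a real gap precisely where you flag the main obstacle.

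\Cref{L:beta} does give a quadratic extension $K_1/K$ with $\wi(\varphi_{K_1}) > \wi(\varphi)$ and $a = \N_{K_1/K}(b_1)$ for some $b_1 \in \mg{K_1}$, but nothing forces $b_1 \in \G(\varphi_{K_1})$, so the induction hypothesis does not apply to the pair $(\varphi_{K_1}, b_1)$. Your fallback of absorbing a residual factor $g \in \G(\varphi)$ by ``a further iteration applied to a smaller witness'' does not terminate: $g$ is not smaller in any controlled sense, and the resulting inclusion $a \in \G(\varphi) \cdot \Hyp_2(\varphi) = \G(\varphi)$ is vacuous. The ``extraction of an $n$-fold Pfister component modulo $\I^{n+1}K$'' and the ``linkage argument'' remain slogans; you give no mechanism for producing $c$ with the required simultaneous properties, and the explicit Albert-form manipulations in \cite{KP08,AP22} for $n=2$ do not generalise in the way you suggest.

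The paper sidesteps the obstacle by first reducing to \emph{torsion} forms. One passes to $K' = K(\sqrt{-a})$: since $a \in \G(\varphi)$ forces $\sign_P(\varphi) = 0$ at every ordering $P$ with $-a \in P$, Pfister's Local-Global Principle makes $\varphi_{K'}$ torsion, while $a = \N_{K'/K}(\sqrt{-a})$. Now comes the key point (\Cref{P:virt-In-torsion-split}): under $\I^{n+1}K'(\sqrt{-1}) = 0$, the ideal $\I^{n+1}K'$ is torsion-free by \cite[Cor.~35.27]{EKM08}, so for any torsion $\psi \in \I^n K'$ and any $c \in \mg{K'}$ the form $\lla c \rra \otimes \psi \in \I^{n+1}K'$ is torsion and hence hyperbolic --- that is, $\G(\psi) = \mg{K'}$. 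The induction on $\dim(\psi_\an)$ then runs without tracking anything: apply \Cref{L:beta} to the current element, and \emph{any} lift to the next quadratic extension is automatically a similarity factor of a form that is still torsion in $\I^n$. Thus the hypothesis $\I^{n+1}K(\sqrt{-1}) = 0$ enters through torsion-freeness of $\I^{n+1}$, not through cohomological obstructions or Pfister-component surgery.
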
 

Results of this type shed a light on the limitations to the  role of cohomological invariants in determining rational connectedness.

Our other results concern the construction of new examples where ${\bf PSim}^+(\vf)$ is not rationally connected.
Ph.~Gille \cite{Gil97} produced examples of quadratic forms $\vf$ of dimension $8$ and trivial discriminant such that ${\bf PSim}^+(\varphi)$ not rationally connected. 
A closer look at those examples yields that there $\mc{C}(\vf)$ has index $8$.
The following theorem (see \Cref{8I2-followup}) produces examples of a different type over the field $K=\cc(t_1,t_2,t_3)$.
Since this field has cohomological dimension $3$, this sharpens the main result of \cite{Gil97}.

\begin{thmI}[\Cref{8I2-followup} \& \Cref{Gille-cd3examples-simplified}]\label{T3}
    Let $K$ be the rational function field in two variables $k(t_1,t_2)$ over a field $k$ with $\car(k)\neq 2$ and $|\scg{k}|\geq 8$.
    Then there exist $a_0,a_1,a_2\in \mg{K}$ such that ${\bf PSim}^+(\varphi)$ is not rationally connected for the $8$-dimensional quadratic form $\vf=\lla t_1,t_2\rra\perp -a_0\lla a_1t_1,a_2t_2\rra$ over $K$.
\end{thmI}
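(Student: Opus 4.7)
The plan is to apply Merkurjev's criterion from Section~\ref{S:Merk}: ${\bf PSim}^+(\vf)$ is rationally connected if and only if every proper similitude factor $a\in\mg{K}$ of $\vf$ lies in $N(\vf)\cdot\sq{K}$, where $N(\vf)\subseteq\mg{K}$ is generated by norms $N_{L/K}(\mg{L})$ over finite extensions $L/K$ making $\vf_L$ hyperbolic. The strategy decomposes into two parts: a general criterion for $8$-dimensional forms in $\I^2 K$ (carried out in \Cref{8I2-followup}) and a concrete construction of parameters $a_i$ meeting that criterion over $K=k(t_1,t_2)$ (carried out in \Cref{Gille-cd3examples-simplified}).

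First I would use $|\scg{k}|\geq 8$ to choose $a_0,a_1,a_2\in\mg{k}$ whose classes in $\scg{k}$ are $\zz/2$-linearly independent. For this choice, $\vf$ is a well-defined $8$-dimensional form in $\I^2 K$: both Pfister summands lie in $\I^2 K$, and scaling by $-a_0$ preserves even dimension and trivial discriminant. I would then verify the anisotropy of $\vf$ by computing iterated second residues at the $t_1$-adic and $t_2$-adic valuations of $K$: these produce, up to Witt equivalence, a non-trivial Pfister form over $k$ built from $a_0,a_1,a_2$, and Springer's theorem yields the anisotropy of $\vf$ itself.

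The heart of the argument is to exhibit an explicit proper similitude factor $a\in\mg{K}$ of $\vf$ that escapes $N(\vf)\cdot\sq{K}$. The candidate $a$ will arise from Pfister-form identities in $W(K)$ such as $\lla b,bc\rra=\lla b,c\rra$, which permit the construction of factors similar to both Pfister summands at once and exploit the fact that the two summands share the slot pattern $(t_1,t_2)$. The main obstacle is the upper bound on $N(\vf)$: while norms from the obvious quadratic splitting fields $K(\sqrt{t_i})$ are characterized by the vanishing of the symbol $(a)\cup(t_i)$ in $H^2(K,\zz/2)$, excluding $a$ from \emph{all} such norm images requires a cohomological argument tracking the residues of $\vf$ along the two independent discrete valuations of $K$. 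The $\zz/2$-independence of $a_0,a_1,a_2$ together with the structure of Milnor $K$-theory of the two-variable rational function field then forces the non-triviality of $\mg{K}/N(\vf)\sq{K}$ at the selected class of $a$, yielding the desired failure of rational connectedness.
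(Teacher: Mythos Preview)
Your outline misses the paper's actual mechanism and, as written, does not constitute a proof. Two concrete gaps:

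\textbf{(1) No link to triquadratic norms.} The paper does not try to bound $\H(\vf)$ via residues and Milnor $K$-theory over $k(t_1,t_2)$. Instead it passes to the henselian field $K^\ast=k(\!(t_1)\!)(\!(t_2)\!)$, rewrites
\[
\vf\;=\;\lla a_0\rra\;\perp\;-t_1\lla a_1\rra\;\perp\;-t_2\lla a_2\rra\;\perp\;t_1t_2\lla a_0a_1a_2\rra,
\]
and uses the machinery of \Cref{S:henselian-stuff} (specifically \Cref{GH-glued-families} together with \Cref{Tignol-group-Gille-translation}) to identify
\[
\G(\vf_{K^\ast})/\H(\vf_{K^\ast})\;\simeq\;\Lambda\bigl(k(\sqrt{a_0},\sqrt{a_1},\sqrt{a_2})/k\bigr).
\]
This reduction to the Tignol group $\Lambda$ of a \emph{triquadratic} extension is the whole point; your ``cohomological argument tracking residues'' and ``structure of Milnor $K$-theory'' are not a substitute for it, and you never name a candidate $a\in\G(\vf)$.

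\textbf{(2) Non-triviality is not automatic.} Even after the identification with $\Lambda$, it is \emph{not} true that mere $\zz/2$-independence of $a_0,a_1,a_2$ in $\scg{k}$ forces $\Lambda(k(\sqrt{a_0},\sqrt{a_1},\sqrt{a_2})/k)\neq 0$. The paper invokes Sivatski's theorem (\Cref{Sivatski-triquatshift}): there exists a field extension $k'/k$, linearly disjoint from $k(\sqrt{a_0},\sqrt{a_1},\sqrt{a_2})$, over which $\Lambda$ becomes non-trivial. Only over $K'=k'(\!(t_1)\!)(\!(t_2)\!)$ does one obtain $\G(\vf_{K'})\neq\H(\vf_{K'})$, which by \Cref{Mer:T1} suffices. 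Your plan asserts the failure already over $K=k(t_1,t_2)$ from independence alone; that step is unjustified and in general false. (Incidentally, your description of the roles of \Cref{8I2-followup} and \Cref{Gille-cd3examples-simplified} is inverted: the former is the corollary proving the statement, the latter is an illustrative example over three variables.)
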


For any $n\in\nat$, N.~Bhaskhar \cite{Bha14} obtained examples of fields $K$ with quadratic forms $\vf\in \I^nK$ such that ${\bf PSim}^+(\vf)$ is not rationally connected. 
The construction used in \cite{Bha14} produces such forms of relatively large dimension.
The following result provides such examples in dimension $2\cdot(2^n-1)$, when $n\geq 3$.

\begin{thmI}[\Cref{In-ex} \& \Cref{E:In-PSim-nrc}]\label{T4}
    Let $K=k(t_0,\ldots,t_n)$ for a field $k$ such that $\car(k)\neq 2$ and $|\scg{k}|\geq 4$.
    Then there exist elements $a_1,\ldots,a_n\in\mg{K}$ and a quadratic form $\vf\in\I^nK$ such that  ${\bf PSim}^+(\vf)$ is not rationally connected  and $\vf\perp\hh \simeq \lla a_1t_1,\ldots,a_nt_n\rra\perp -\lla t_1,\ldots,t_n\rra$.
\end{thmI}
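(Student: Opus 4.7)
The plan is to invoke Merkurjev's characterization, revisited in \Cref{S:Merk}: the group $\PSim^+(\vf)$ fails to be rationally connected precisely when there is an element of $\mg{K}$ that is a similarity factor of $\vf$ but does not lie in the subgroup of $\mg{K}$ generated by norms from finite extensions $L/K$ in which $\vf_L$ becomes hyperbolic. Note that \Cref{T5} cannot be invoked to yield rational connectedness automatically here, since its hypothesis $\I^{n+1}K(\sqrt{-1})=0$ visibly fails for a multivariable rational function field $K=k(t_0,\ldots,t_n)$, so an obstruction of the type described is a priori possible.

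First, I would construct $\vf$ by choosing elements $a_1,\ldots,a_n\in\mg{k}$ whose square classes, together with those of $t_1,\ldots,t_n$, make the two $n$-fold Pfister forms $\pi=\lla t_1,\ldots,t_n\rra$ and $\pi'=\lla a_1t_1,\ldots,a_nt_n\rra$ over $K$ both anisotropic and structurally distinct; the hypothesis $|\scg{k}|\geq 4$ supplies enough nonsquare classes in~$k$ for this choice. An iterated application of Springer's theorem along the $t_i$-adic valuations of $K$ shows that $\pi'\perp-\pi$ has Witt index exactly~$1$, so its anisotropic part~$\vf$ has the required dimension $2(2^n-1)$. That $\vf\in\I^nK$ follows immediately, since $\vf$ is Witt equivalent to the difference of two $n$-fold Pfister forms.

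Second, I would identify an explicit candidate $c\in G(\vf)\subseteq\mg{K}$ for the obstruction. Since Pfister forms satisfy $G(\pi)=D(\pi)$, the elements $-t_i$ lie in $G(\pi)$ and $-a_it_i$ in $G(\pi')$; products of these supply elements of $G(\pi)\cap G(\pi')\subseteq G(\vf)$, the last inclusion holding because scaling preserves the common hyperbolic plane in $\pi'\perp-\pi$. A natural candidate for~$c$ is an element built from a suitable combination of the $t_i$ and the $a_i$, whose independence from the defining slots hints that it evades the expected norm structure.

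The main obstacle is the third step: showing that $c$ does not belong to the subgroup generated by norms from hyperbolic splitting extensions of $\vf$. The plan is a residue-theoretic argument. Any finite extension $L/K$ with $\vf_L$ hyperbolic forces the symbol $\{a_1t_1,\ldots,a_nt_n\}-\{t_1,\ldots,t_n\}$ to vanish in $K^M_n(L)/2$, which via the tame symbol at each $t_i$-adic place of $K$ places rigid constraints on the ramification of $L$. Combining these with transfer formulas should project a hypothetical identity $c=\prod_i\N_{L_i/K}(\beta_i)$ into a quotient group in which the image of~$c$ is nonzero while every norm contribution is forced to vanish, producing the desired contradiction. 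Since \Cref{T4} is stated as the outcome of the more technical \Cref{In-ex} and \Cref{E:In-PSim-nrc}, the bulk of this residue-theoretic heavy lifting is naturally located in those preparatory results.
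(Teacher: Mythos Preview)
Your proposal has two concrete gaps, and the actual mechanism in the paper is different from what you sketch.

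First, choosing $a_1,\dots,a_n\in\mg{k}$ is too restrictive and does not use the extra variable $t_0$ at all. The hypothesis $|\scg{k}|\geq 4$ only guarantees two independent nonsquares in $k$; for $n\geq 3$ you cannot expect $n$ elements of $\mg{k}$ whose square classes are independent, and more importantly you cannot expect the norm obstruction you need to exist over $k$. In the paper the $a_i$ are taken in $k'=k(t_0)$, and the role of $t_0$ is precisely to manufacture, via Sivatski's construction (\Cref{Tignol-Sivatski-multiquad-nontriv}), a multiquadratic extension $k'(\sqrt{a_1},\dots,\sqrt{a_n})/k'$ of degree $2^n$ with nontrivial group $\Lambda$. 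Your proposal never mentions $\Lambda$ or Sivatski's input, and without them there is no source for the obstruction.

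Second, the obstruction element and the argument excluding it from $\H(\vf)$ are both misdescribed. The element $c$ is not ``built from a suitable combination of the $t_i$ and the $a_i$'': it is a constant $d\in\mg{k'}$, namely Sivatski's element lying in $\bigcap_i\N^\ast_{k'(\sqrt{a_i})/k'}$ but not in $\sq{k'}\N^\ast_{k'(\sqrt{a_1},\dots,\sqrt{a_n})/k'}$. And rather than a direct residue computation on the rational function field $K$, the paper passes to the field extension $K'=k'(\!(t_1)\!)\cdots(\!(t_n)\!)$ and uses the henselian machinery of \Cref{S:henselian-stuff} (specifically \Cref{GH-glued-families}) together with \Cref{Tignol-group-Gille-translation} to identify $\G(\vf_{K'})/\H(\vf_{K'})$ with $\Lambda(k'(\sqrt{a_1},\dots,\sqrt{a_n})/k')$. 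One then applies \Cref{Mer:T1} with this particular extension $K'/K$. Your final sentence defers the ``residue-theoretic heavy lifting'' to \Cref{In-ex} and \Cref{E:In-PSim-nrc}, but those are exactly the statements you are asked to prove, so this deference is circular.
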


In particular for $n=3$, since $12$-dimensional forms in $\I^3K$ are of height at most $2$, we find that $14$ is the smallest dimension where forms $\vf\in\I^3K$
can occur such that ${\bf PSim}^+(\vf)$ is not rationally connected.
Moreover, \Cref{T4} shows that \Cref{T2} cannot be extended to cover sums of two scaled Pfister forms in general without requesting them to be of different dimension.

\Cref{T3} and \Cref{T4} both follow from a general method, due to Ph.~Gille \cite{Gil97}, to obtain 
 quadratic forms $\varphi$ such that ${\bf PSim}^+(\varphi)$ is not rationally connected.
 It combines the construction of forms over iterated power series fields with a construction of triquadratic extensions with interesting norm conditions, due to J.-P.~Tignol \cite{Tig81}.
We combine it here with more recent constructions of such triquadratic extensions provided by A.~Sivatski in \cite{Siv10} and \cite{Siv18}, which we restate in \Cref{S:multiquad}.
In \Cref{S:henselian-stuff}, we give a comprehensive account of this method, and in \Cref{S:construction} we apply it to obtain our new examples.

While our study here is limited to the quadratic form case, it should be extended to cover adjoint classical groups in general,  replacing quadratic forms by algebras with involution.

\section{Merkurjev's criterion}
\label{S:Merk}

We now delve a bit more in the setup developed by Merkurjev in \cite{Mer96} that is at the basis of all results concerning rational connectedness of ${\bf PSim}^+(\vf)$.

For a finite field extension $L/K$, we write 
$\N_{L/K}:L\to K$ for the norm map and we abbreviate $$\N_{L/K}^\ast=\N_{L/K}(\mg{L})\,.$$

For a quadratic form $\varphi$ over $K$, we set
$$\G(\vf)=\{a\in\mg{K}\mid a\vf\simeq \vf\}\,,$$
and for a field extension $K'/K$ we denote by $\varphi_{K'}$ the quadratic form $\varphi$ when considered over the field $K'$.

More generally, we consider $r\in\nat$ and a tuple $\varphi=(\varphi_1,\dots,\varphi_r)$ of quadratic forms over $K$.
We set $\G(\varphi)=\bigcap_{i=1}^r\G(\varphi_i)$.
For a field extension $K'/K$, we set $$\varphi_{K'}=((\varphi_1)_{K'},\dots,(\varphi_r)_{K'})\,.$$

We write $\Hyp(\vf)$ (resp.~$\Hyp_2(\vf)$) for the subgroup of $\mg{K}$ generated by the sets $\N_{L/K}^\ast$ where $L/K$ ranges over all finite field extensions (resp.~all finite $2$-extensions) such that $(\varphi_i)_L$ hyperbolic for $1\leq i\leq r$.
We further set 
$$\H(\vf)=\sq{K}\cdot\Hyp(\vf)\,.$$
Trivially, we have the inclusions $\Hyp_2(\vf)\subseteq\Hyp(\vf)\subseteq\H(\vf)$ and $\sq{K}\subseteq\G(\vf)$.
In view of Scharlau's Norm Principle \cite[Chap.~VII, Sect.~4]{Lam05}, we further have  that $\Hyp(\vf)\subseteq \G(\vf)$ and hence $$\H(\vf)\subseteq \G(\vf)\,.$$

\begin{prop}\label{no-odd-zero-divisors}
Let $\varphi$ and $\psi$ quadratic forms over $K$ such that $\dim(\psi)$ is odd.
Then $\G(\vf\otimes\psi)=\G(\vf)$, $\Hyp(\vf\otimes\psi)=\Hyp(\vf)$ and $\H(\vf\otimes\psi)=\H(\vf)$.
\end{prop}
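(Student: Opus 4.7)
The plan is to reduce all three equalities to one key lemma: \emph{if $\tau$ is an even-dimensional quadratic form over~$K$ and $\psi$ is an odd-dimensional quadratic form over~$K$ such that $\tau\otimes\psi$ is hyperbolic, then $\tau$ is hyperbolic}. To prove the lemma, one may first scale $\psi$ to assume it represents~$1$, so that $\psi\simeq\la 1\ra\perp\psi''$ with $\dim\psi''$ even and $[\psi'']\in\I K$. The hypothesis then reads $[\tau]=-[\tau]\cdot[\psi'']$ in the Witt ring. Since $[\tau]\in\I K$, a straightforward induction yields $[\tau]\in\I^n K$ for every $n\in\nat$, and the Arason--Pfister Hauptsatz forces $[\tau]=0$, so $\tau$ is hyperbolic.

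Granted this lemma, the three equalities follow in a routine way. The inclusions $\G(\vf)\subseteq\G(\vf\otimes\psi)$ and $\Hyp(\vf)\subseteq\Hyp(\vf\otimes\psi)$ are immediate: an isometry $a\vf\simeq\vf$ tensored with $\psi$ gives $a(\vf\otimes\psi)\simeq\vf\otimes\psi$, and if $\vf_L$ is hyperbolic then so is $(\vf\otimes\psi)_L=\vf_L\otimes\psi_L$. For the reverse direction in~$\G$: given $a\in\G(\vf\otimes\psi)$, the form $\la 1,-a\ra\otimes\vf\otimes\psi$ is hyperbolic, and the lemma applied to the even-dimensional form $\la 1,-a\ra\otimes\vf$ yields $a\vf\simeq\vf$. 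For~$\Hyp$: for a finite extension $L/K$ with $(\vf\otimes\psi)_L$ hyperbolic, the lemma applied over~$L$ to $\vf_L$ (the case $\dim\vf$ odd being trivial, since then neither form is ever hyperbolic) shows that $\vf_L$ is hyperbolic; hence the generating sets of $\Hyp(\vf)$ and $\Hyp(\vf\otimes\psi)$ coincide. The equality $\H(\vf\otimes\psi)=\H(\vf)$ is then immediate from the definition $\H=\sq{K}\cdot\Hyp$.

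The crux of the argument is the key lemma, whose proof rests on the Arason--Pfister Hauptsatz; everything else amounts to formal manipulation of the definitions.
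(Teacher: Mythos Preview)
Your argument is correct and follows the same structure as the paper's proof: both reduce everything to the single fact that an odd-dimensional form cannot annihilate a nonzero Witt class, and then apply this fact with $\rho=\lla a\rra\otimes\vf$ (for $\G$) and with $\rho=\vf_L$ over finite extensions $L/K$ (for $\Hyp$). The paper simply cites this fact from \cite[Chap.~VIII, Cor.~8.5]{Lam05}, whereas you supply a proof via $\bigcap_n \I^nK=0$ and the Arason--Pfister Hauptsatz; your route is valid but invokes heavier machinery than the classical proof in \cite{Lam05}, which is more elementary.
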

\begin{proof}
Since $\dim(\psi)$ is odd, it follows by \cite[Chap.~VIII, Cor.~8.5]{Lam05} that,
for any field extension $L/K$ and any quadratic form $\rho$ over $L$,
$\rho\otimes \psi_L$ is hyperbolic if and only if $\rho$ is hyperbolic.
Applying this for arbitrary finite extensions $L/K$ and $\rho=\vf_L$ yields that $\Hyp(\vf\otimes\psi)=\Hyp(\vf)$, whereby $\H(\vf\otimes\psi)=\H(\vf)$.
Applying the same fact for an arbitrary $a\in\mg{K}$ to $L=K$ and $\rho=\lla a\rra\otimes\vf$ yields that $\G(\vf\otimes\psi)=\G(\vf)$.
\end{proof}

\begin{prop}
    Let $r\in\nat$ and $a_1,\dots,a_r\in\mg{K}$ and $M=K(\sqrt{a_1},\dots,\sqrt{a_r})$ be such that $[M:K]=2^r$.
    Let $\varphi=(\lla a_1\rra,\dots,\lla a_r\rra)$.
    Then we have:
    \begin{eqnarray*}
    \G(\varphi) & = & \quad \bigcap_{i=1}^r\N_{K(\sqrt{\vphantom{b}a_i})/K}^\ast\\
    \Hyp(\varphi) & = & \Hyp_2(\varphi) \,= \,\N_{M/K}^\ast\\
    \H(\varphi) & = & \sq{K}\cdot \N_{M/K}^\ast
    \end{eqnarray*}
\end{prop}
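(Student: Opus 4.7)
The proof breaks naturally into three pieces, one for each equality, and each piece rests on a standard fact from Pfister form theory together with basic properties of norms in towers of extensions. No step is deep; the main task is to put the right classical ingredients in the right place.

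For the first equality, I would use that for any $a\in\mg{K}$, the similarity group of the Pfister form $\lla a\rra=\la 1,-a\ra$ equals its set of represented nonzero values (true for any Pfister form), which in turn equals $\N_{K(\sqrt{a})/K}^\ast$. (If $a\in\sq{K}$ this reads $\mg{K}=\N_{K/K}^\ast$, so the formula is uniform.) Hence $\G(\lla a_i\rra)=\N_{K(\sqrt{a_i})/K}^\ast$ for every $i$, and intersecting over $i$ yields $\G(\vf)=\bigcap_{i=1}^r \N_{K(\sqrt{a_i})/K}^\ast$ by the very definition of $\G(\vf)$ for a tuple.

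For the second equality, the key observation is that a one-fold Pfister form $\lla a_i\rra$ becomes hyperbolic over a field extension $L/K$ if and only if $a_i\in\sq{L}$, i.e.~$\sqrt{a_i}\in L$. Consequently, each component of $\vf$ becomes hyperbolic over $L$ if and only if $M\subseteq L$. Now $M/K$ is itself a finite $2$-extension over which every $\lla a_i\rra$ is hyperbolic, so $\N_{M/K}^\ast\subseteq \Hyp_2(\vf)\subseteq \Hyp(\vf)$. Conversely, for any finite extension $L/K$ over which $\vf$ becomes hyperbolic componentwise, we have $M\subseteq L$, and the tower formula $\N_{L/K}=\N_{M/K}\circ\N_{L/M}$ gives $\N_{L/K}^\ast\subseteq \N_{M/K}^\ast$. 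Taking the subgroup generated by all such $\N_{L/K}^\ast$ yields $\Hyp(\vf)\subseteq\N_{M/K}^\ast$, and equality throughout.

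The third equality is then immediate from the definition $\H(\vf)=\sq{K}\cdot\Hyp(\vf)$ combined with the second equality. The hypothesis $[M:K]=2^r$ is not strictly needed for the argument as presented, but it ensures the statement is stated in its strongest form and that the quadratic extensions $K(\sqrt{a_i})/K$ appearing in the first formula are all nontrivial and linearly disjoint. Since every step reduces to a standard Pfister form fact or the transitivity of the norm, there is no real obstacle; the entire proof should fit in a few lines once the above three observations are recorded.
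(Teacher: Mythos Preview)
Your proposal is correct and follows essentially the same approach as the paper: identify $\G(\lla a_i\rra)$ with the norm group $\N_{K(\sqrt{a_i})/K}^\ast$, observe that all $\lla a_i\rra$ become hyperbolic over $L$ precisely when $M\subseteq L$, and deduce the remaining equalities (the paper leaves the transitivity-of-norm step implicit, whereas you spell it out). Your remark that the hypothesis $[M:K]=2^r$ is not strictly needed is also accurate.
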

\begin{proof}
We have $\G(\lla a_i\rra)=\N_{K(\sqrt{\vphantom{b}a_i})/K}^\ast$.
This yields the first equality.
For any field extension $K'/K$, the forms $\lla a_i\rra_{K'}$ are hyperbolic for $1\leq i\leq r$ if and only if $a_1,\dots,a_r\in\sq{K'}$, which is if and only if $M\subseteq K'$. This implies the remaining equalities in the statement.
\end{proof}

Let $\vf$ be a quadratic form over $K$.
Its group of proper projective similitudes ${\bf PSim}^+(\vf)$ is an example of a connected semisimple adjoint classical linear algebraic group.
Whether this group is rationally connected is characterised in \cite{Mer96} as follows.

\begin{thm}[Merkurjev]
\label{Mer:T1}
The group ${\bf PSim}^+(\vf)$ is rationally connected if and only if $\G(\vf_{K'})=\H(\vf_{K'})$ holds for every field extension $K'/K$.
\end{thm}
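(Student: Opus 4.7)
The plan is to reduce the theorem to Merkurjev's calculation of the $R$-equivalence class group of ${\bf PSim}^+(\vf)$ carried out in \cite{Mer96}. By the standard definition, the connected linear algebraic group ${\bf PSim}^+(\vf)$ is rationally connected (that is, $R$-trivial) over $K$ if and only if, for every field extension $K'/K$, the group ${\bf PSim}^+(\vf_{K'})(K')/R$ of $R$-equivalence classes is trivial. Thus it suffices to construct, functorially in $K'/K$, an isomorphism
$${\bf PSim}^+(\vf_{K'})(K')/R \;\cong\; \G(\vf_{K'})/\H(\vf_{K'})\,,$$
and then quantify over all extensions $K'/K$.

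To produce this isomorphism, I would work with the exact sequence of algebraic groups
$$1 \to \G_m \to {\bf Sim}^+(\vf) \to {\bf PSim}^+(\vf) \to 1\,,$$
together with the multiplier morphism $\mu\colon {\bf Sim}^+(\vf)\to \G_m$, defined by $\vf(gv)=\mu(g)\vf(v)$. Taking $K$-points and reducing modulo $\sq{K}$, this induces a homomorphism $\bar\mu\colon {\bf PSim}^+(\vf)(K)\to \G(\vf)/\sq{K}$, whose image is $\G(\vf)/\sq{K}$ after the usual bookkeeping with the discriminant and the spinor norm and whose kernel is covered by the image of ${\bf O}^+(\vf)(K)$. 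The core structural claim is that $\bar\mu$ descends to an isomorphism
$${\bf PSim}^+(\vf)(K)/R \;\cong\; \G(\vf)/\H(\vf)\,.$$

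The easier half of this identification — that every class in $\H(\vf)/\sq{K}$ is realised by an element $R$-equivalent to the identity — uses the observation that whenever $L/K$ is a finite extension with $\vf_L$ hyperbolic, the base change ${\bf PSim}^+(\vf)_L$ is isomorphic to a ${\bf PGL}_{n}$-type quotient and hence is $L$-rational. Scharlau's Norm Principle, combined with a transfer of rational paths from $L$ down to $K$, then produces $R$-trivial preimages of every element of $\N_{L/K}^\ast$ modulo squares. Running over all such $L$ covers $\Hyp(\vf)$, and the extension to $\H(\vf)=\sq{K}\cdot\Hyp(\vf)$ is immediate because squares land in the kernel of $\bar\mu$.

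The main obstacle, and the technical heart of the proof, is the converse inclusion: given $g\in {\bf PSim}^+(\vf)(K)$ which is $R$-equivalent to $1$ by way of a rational map $\mathbb{A}^1\dashrightarrow {\bf PSim}^+(\vf)$, show that $\bar\mu(g)\in\H(\vf)/\sq{K}$. I would lift the path generically to a morphism from an open subset of $\mathbb{A}^1$ into ${\bf Sim}^+(\vf)$, analyse the variation of the multiplier along this curve, and use that at each closed point where the lift fails to extend the specialisation occurs over a residue field which is a finite extension of $K$ hyperbolising $\vf$; the jump of the multiplier at such a point is, by a local computation, a norm from that residue field. Summing these contributions expresses $\bar\mu(g)$ as a product of norms from hyperbolising extensions times a square, which is precisely the assertion that $\bar\mu(g)\in \H(\vf)/\sq{K}$. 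Applying this local-to-global analysis uniformly over all $K'/K$ yields the theorem.
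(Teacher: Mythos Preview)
The paper does not prove this statement; its entire proof is the one-line citation ``See \cite[Theorem~1]{Mer96}.'' You are therefore reconstructing Merkurjev's argument, which the present paper treats as a black box.

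Your architecture is sound, and the easy inclusion is along the right lines (though ``transfer of rational paths from $L$ down to $K$'' hides a genuine norm-principle argument). The hard direction, however, is where the gap lies. The lift of the path to ${\bf Sim}^+(\vf)$ does not ``fail to extend'' on the domain of the original map --- the fibration ${\bf Sim}^+(\vf)\to{\bf PSim}^+(\vf)$ is a $\G_m$-torsor, so by Hilbert~90 the lift exists wherever the map to ${\bf PSim}^+(\vf)$ is defined --- and hence your phrase ``closed point where the lift fails to extend'' has no clear referent. What must actually be analysed are the irreducible factors of the multiplier $m(t)\in K(t)^\times$ of the lifted path. The substantive claim, which you assert without argument, is that at each monic irreducible $p$ occurring to \emph{odd} order in $m(t)$ the form $\vf$ becomes hyperbolic over $L=K[t]/(p)$. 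This requires the residue calculus for quadratic forms: comparing first and second residue forms of the isometry $m(t)\vf_{K(t)}\simeq\vf_{K(t)}$ at the $p$-adic place, with $\vf$ unramified there, forces $\vf_L$ to be hyperbolic. Once that is established, the identity $p(a)=\N_{L/K}(a-\bar t)$ for $a\in K$ places each factor of $m(1)/m(0)$ into $\Hyp(\vf)$ modulo squares, and the argument closes. Without this step made explicit, your sketch is incomplete at exactly the point where the content lies.
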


\begin{proof}
    See \cite[Theorem 1]{Mer96}.
\end{proof}

We obtain the following consequence.

\begin{cor}\label{no-odd-zd-ratcon}
Let $\varphi$ and $\psi$ quadratic forms over $K$ such that $\dim(\psi)$ is odd.
Then ${\bf PSim}^+(\vf\otimes \psi)$ is rationally connected if and only if ${\bf PSim}^+(\vf)$ is rationally connected.
\end{cor}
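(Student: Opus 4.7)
The plan is to combine Merkurjev's criterion (\Cref{Mer:T1}) with \Cref{no-odd-zero-divisors} in a straightforward way; the whole argument is essentially formal, and I do not expect any real obstacle.

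First I would invoke \Cref{Mer:T1} for the form $\vf\otimes\psi$: the group ${\bf PSim}^+(\vf\otimes\psi)$ is rationally connected if and only if $\G((\vf\otimes\psi)_{K'})=\H((\vf\otimes\psi)_{K'})$ holds for every field extension $K'/K$. Since scalar extension commutes with tensor product, $(\vf\otimes\psi)_{K'}\simeq \vf_{K'}\otimes\psi_{K'}$, and crucially $\dim(\psi_{K'})=\dim(\psi)$ remains odd.

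Next I would apply \Cref{no-odd-zero-divisors} over the field $K'$ to the pair $(\vf_{K'},\psi_{K'})$. This yields the two equalities $\G(\vf_{K'}\otimes\psi_{K'})=\G(\vf_{K'})$ and $\H(\vf_{K'}\otimes\psi_{K'})=\H(\vf_{K'})$. Hence the condition $\G((\vf\otimes\psi)_{K'})=\H((\vf\otimes\psi)_{K'})$ is equivalent to $\G(\vf_{K'})=\H(\vf_{K'})$.

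Finally, since this equivalence holds uniformly in $K'/K$, applying \Cref{Mer:T1} a second time, now to $\vf$, gives that ${\bf PSim}^+(\vf\otimes\psi)$ is rationally connected if and only if ${\bf PSim}^+(\vf)$ is. The only thing one needs to double-check is that \Cref{no-odd-zero-divisors} is genuinely formulated for an arbitrary base field (so it can be reapplied over each $K'$), which is clear from its statement and proof, since the key input from \cite[Chap.~VIII, Cor.~8.5]{Lam05} about odd-dimensional tensor factors is insensitive to the base field.
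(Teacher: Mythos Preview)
Your proposal is correct and follows essentially the same approach as the paper's own proof: apply \Cref{no-odd-zero-divisors} over each field extension $K'/K$ to identify $\G$ and $\H$ for $\vf\otimes\psi$ with those for $\vf$, and then invoke \Cref{Mer:T1}. The extra remarks you make (that $(\vf\otimes\psi)_{K'}\simeq\vf_{K'}\otimes\psi_{K'}$ and that $\dim(\psi_{K'})$ stays odd) are exactly the implicit checks behind the paper's application of \Cref{no-odd-zero-divisors} over $K'$.
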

\begin{proof}
Consider an arbitrary field extension $K'/K$.
By \Cref{no-odd-zero-divisors}, we have
$\G(\vf\otimes\psi)_K')=\G(\vf_{K'})$
and $\H((\vf\otimes\psi)_{K'})=\H(\vf_{K'})$.
In particular the equality $\G((\vf\otimes\psi)_{K'})=\H((\vf\otimes\psi)_{K'})$ holds if and only if $\G(\vf_{K'})=\H(\vf_{K'})$.
Having this equivalence for all field extensions $K'/K$, the statement follows by \Cref{Mer:T1}.
\end{proof}

\begin{rem}
        Merkurjev's statement \cite[Theorem 1]{Mer96} characterizes more generally rational connectedness for ${\bf PSim}^+(A,\s)$ for any central simple algebra with involution $(A,\s)$.
    Also the fact used in \Cref{no-odd-zero-divisors} about zero divisors in the Witt ring $\mathsf{W}K$ can be extended to that setting. The annihilator in $\mathsf{W}K$  of any element of the Witt group of $(A,\s)$ seen as a $\mathsf{W}K$-module is contained in the fundamental ideal $\I K$.  
    This can be seen, assuming in the unitary case that $\exp A\leq 2$, by a generic splitting argument based on the results of Karpenko and Tignol in \cite{Kar10}.
    Using this, one can extend \Cref{no-odd-zd-ratcon} to the case where $A$ is not split.
    Since here we bound ourselves to the quadratic form case, we will include details in another article.
\end{rem}

\begin{rem}
         Under the hypotheses of \Cref{no-odd-zd-ratcon}, one obtains using \cite[Prop.~4.5]{Mer98} that ${\bf PSim}^+(\vf\otimes \psi)\times \mathbb{A}^m$ and ${\bf PSim}^+(\vf)\times \mathbb{A}^{m'}$ are birationally isomorphic for some positive integers $m$ and $m'$.
     This yields an alternative proof of \Cref{no-odd-zd-ratcon}. 
\end{rem}

\section{Sufficient conditions for rational conectedness}
\label{S:splitting}

A quadratic form $\vf$ over $K$ is called \emph{split} if $\dim(\vf_\an)\leq 1$. For even-dimensional quadratic forms, \emph{split} is synonymous with \emph{hyperbolic}.

\begin{lem}\label{L:beta}
Let $\vf$ be a quadratic form over $K$ and $a\in\G(\vf)$.
Then  $\vf$ is split or there exists a quadratic field extension $L/K$ with $\wi(\vf_L)>\wi(\vf)$ and $a\in\N_{L/K}^\ast$.
\end{lem}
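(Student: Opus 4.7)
The plan is to exhibit $L$ explicitly as $K(\sqrt{d})$ for a suitable $d\in\mg K$, chosen so that a carefully selected two-dimensional subform of $\vf_\an$ becomes hyperbolic over $L$ while $a$ simultaneously lies in $\N_{L/K}^\ast$. The unifying observation is that whenever $\vf_\an$ contains a subform of the shape $b\la 1,a-x^2\ra$ with $b\in\mg K$ and $x\in K$ satisfying $x^2\neq a$, then $L=K(\sqrt{x^2-a})$ works: such a subform hyperbolizes over $L$, and the identity $a=x^2+(a-x^2)$ places $a$ in $D(\la 1,a-x^2\ra)=\N_{L/K}^\ast$.

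First I would reduce to the anisotropic case. If $\vf$ is split the lemma is immediate; otherwise Witt cancellation applied to $a\vf\simeq\vf\simeq\vf_\an\perp\wi(\vf)\times\hh$ yields $a\in\G(\vf_\an)$, and since $\wi(\vf_L)-\wi(\vf)=\wi((\vf_\an)_L)$ for every extension $L/K$, the problem reduces to finding a quadratic $L/K$ making $(\vf_\an)_L$ isotropic and containing $a$ as a norm. By definition of `not split', $\dim\vf_\an\geq 2$. When $a\in\sq K$, the argument is straightforward: any two-dimensional (automatically anisotropic) subform $\la c_1,c_2\ra$ of $\vf_\an$ yields a proper quadratic extension $L=K(\sqrt{-c_1c_2})$ hyperbolizing $\la c_1,c_2\ra$, while $a\in\sq K\subseteq\N_{L/K}^\ast$ is automatic.

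The main case is $a\notin\sq K$. I would pick $b\in D(\vf_\an)$, decompose $\vf_\an\simeq\la b\ra\perp\vf_1$, and use $ab\in D(\vf_\an)$ (which follows from $a\in\G(\vf_\an)$) to write $ab=bx^2+\vf_1(v)$ with $x\in K$ and $v\in K^{\dim\vf_1}$. The hypothesis $a\notin\sq K$ rules out $v=0$, and then anisotropy of $\vf_1$ gives $c:=\vf_1(v)=b(a-x^2)\neq 0$, so Witt's theorem provides the two-dimensional subform $\la b,c\ra\simeq b\la 1,a-x^2\ra\subseteq\vf_\an$. The crux will be verifying that $d:=x^2-a$ is not a square in $K$, so that $L=K(\sqrt d)$ is genuinely a proper extension: $d=y^2$ with $y\neq 0$ makes $c=-by^2$, forcing $\vf_1$ to represent $-b$ and embedding $\la b,-b\ra\simeq\hh$ into the anisotropic $\vf_\an$, while $y=0$ gives $a=x^2\in\sq K$. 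This non-square check, which is precisely where anisotropy of $\vf_\an$ enters crucially, is the main technical obstacle. The naive guesses $L=K(\sqrt{\pm a})$ need not hyperbolize $\vf$ at all, as already visible for the Pfister form $\vf=\lla e\rra$, whose only splitting field is $K(\sqrt e)$ and $e$ is typically unrelated to $a$ modulo squares.
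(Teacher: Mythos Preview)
Your proof is correct and essentially the same as the paper's: both locate a binary subform $\beta\subseteq\vf_\an$ and take $L=K(\sqrt{\disc\beta})$. The paper's version is more uniform, observing that $\lla a\rra\otimes\vf_\an$ is isotropic, extracting a binary $\beta$ with $\lla a\rra\otimes\beta$ isotropic, and concluding that $\lla a,\disc\beta\rra$ is hyperbolic and hence $a\in\N_{L/K}^\ast$; your explicit construction $\beta=b\la 1,a-x^2\ra$ via representing $ab$, together with the separate treatment of $a\in\sq K$, simply unpacks that same step.
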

\begin{proof}
Suppose that $\vf$ is not split. Since $a\in\G(\vf)=\G(\vf_\an)$, it follows in particular that $\lla a\rra\otimes\vf_\an$ is isotropic.
Hence there exists a binary subform $\beta$ of $\vf_\an$ such that $\lla a\rra\otimes \beta$ is isotropic.
Let $d=\disc(\beta)$. Note that $\beta$ is anisotropic, and hence $d\notin\sq{K}$. 
It further follows that $\lla a,d\rra$ is hyperbolic.
Set $L=K(\sqrt{d})$. Then $L/K$ is a quadratic field extension and $a\in\N_{L/K}^\ast$. 
Furthermore, $(\vf_\an)_L$ is isotropic, whereby $\wi(\vf_L)>\wi(\vf)$.
\end{proof}

\begin{prop}\label{P:pfiratcon}
Let $\pi$ be a Pfister form over $K$.
For every $a\in\G(\pi)\setminus\sq{K}$, there exists a field extension $L/K$ with $[L:K]\leq 2$ such that $\pi_L$ is hyperbolic and $a\in\N^\ast_{L/K}$. In particular $\G(\pi)=\Hyp_2(\pi)=\H(\pi)$.
\end{prop}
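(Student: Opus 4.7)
The plan is to apply \Cref{L:beta} to $\pi$ and $a$, and then to exploit the well-known roundness property: a Pfister form is either anisotropic or hyperbolic.

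First I would dispose of the trivial case where $\pi$ is hyperbolic by taking $L=K$; then $\pi_L$ is hyperbolic and $a\in\mg{K}=\N^\ast_{K/K}$. Otherwise $\pi$ is not split, so \Cref{L:beta} produces a quadratic field extension $L/K$ with $\wi(\pi_L)>\wi(\pi)=0$ and $a\in\N^\ast_{L/K}$. In this case $\pi_L$ is an isotropic Pfister form and is therefore hyperbolic, which establishes the first assertion.

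For the \emph{in particular} statement, the inclusions $\Hyp_2(\pi)\subseteq\H(\pi)\subseteq\G(\pi)$ are already recorded in the text, so only $\G(\pi)\subseteq\Hyp_2(\pi)$ remains to be verified. The first part of the proposition handles any $a\in\G(\pi)\setminus\sq{K}$ directly, placing such an $a$ into $\N^\ast_{L/K}\subseteq\Hyp_2(\pi)$ for a suitable extension of degree at most $2$. For $a\in\sq{K}$, writing $\pi\simeq\lla b_1,\ldots,b_n\rra$, the form $\pi$ becomes hyperbolic over the finite $2$-extension $M=K(\sqrt{b_1},\ldots,\sqrt{b_n})$, so $\sq{K}\subseteq\N^\ast_{M/K}\subseteq\Hyp_2(\pi)$.

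I do not foresee any real obstacle here: \Cref{L:beta} combined with the roundness of Pfister forms is essentially the whole content, and the only detail requiring separate attention is the treatment of square classes in proving $\G(\pi)\subseteq\Hyp_2(\pi)$, handled by the observation above.
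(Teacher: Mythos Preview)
Your approach is exactly the paper's: dispose of the hyperbolic case with $L=K$, then apply \Cref{L:beta} and use that an isotropic Pfister form is hyperbolic. There is, however, one slip in your handling of $\sq{K}\subseteq\Hyp_2(\pi)$. The inclusion $\sq{K}\subseteq\N^\ast_{M/K}$ for the full multiquadratic extension $M=K(\sqrt{b_1},\ldots,\sqrt{b_n})$ is not a general fact when $[M:K]\geq 4$: for $d\in\mg{K}$ one only gets $\N_{M/K}(d)=d^{[M:K]}$, and squares in $K$ need not lie in $\N^\ast_{M/K}$ (this is precisely why the paper distinguishes $\Hyp_2(\varphi)=\N^\ast_{M/K}$ from $\H(\varphi)=\sq{K}\cdot\N^\ast_{M/K}$ in \Cref{Tignol-group-Gille-translation}). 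The remedy is trivial: if $\pi$ is anisotropic with $\dim(\pi)\geq 2$, then some slot $b_i$ lies outside $\sq{K}$, and the \emph{quadratic} extension $L=K(\sqrt{b_i})$ already makes $\pi$ hyperbolic, giving $\sq{K}\subseteq\N^\ast_{L/K}\subseteq\Hyp_2(\pi)$. The paper itself leaves this step implicit, tacitly using that the quadratic $L$ produced by \Cref{L:beta} in the first part already yields $\sq{K}\subseteq\N^\ast_{L/K}\subseteq\Hyp_2(\pi)$.
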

\begin{proof}
Consider $a\in\G(\pi)\setminus\sq{K}$.
Hence $a\in\G(\pi)$ and as $a\notin\sq{K}$, we have $\dim(\pi)>1$.
If $\pi$ is hyperbolic, then we can choose $L=K$.
Assume that $\pi$ is not hyperbolic.
By \Cref{L:beta}, we can find a quadratic field extension $L/K$ such that $\pi_L$ is isotropic and $a\in\N_{L/K}^\ast$.
Since $\pi$ is a Pfister form, it follows that $\pi_L$ is hyperbolic. 
This argument shows in particular that $\G(\pi)\subseteq\Hyp_2(\pi)$.
Since $\Hyp_2(\pi)\subseteq\H(\pi)\subseteq\G(\pi)$, this completes the proof.
\end{proof}

A field extension $L/K$ is called \emph{biquadratic} if $[L:K]=4$ and $L=K(\sqrt{a},\sqrt{b})$ for certain $a,b\in\mg{K}$.
The following well-known statement is called in \cite{Bha14} the \emph{biquadratic norm trick}.

\begin{lem}\label{biquatrick}
    Let $L_1/K$ and $L_2/K$ two linearly disjoint quadratic extensions of $K$ and let $L=L_1\otimes_KL_2$.
    Then $\sq{K}\cdot\N_{L/K}^\ast= \N_{L_1/K}^\ast\cap\N_{L_2/K}^\ast$.
\end{lem}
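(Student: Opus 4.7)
The inclusion $\sq{K}\cdot\N_{L/K}^\ast\subseteq\N_{L_1/K}^\ast\cap\N_{L_2/K}^\ast$ is immediate: by transitivity of the norm, $\N_{L/K}=\N_{L_i/K}\circ\N_{L/L_i}$, so $\N_{L/K}^\ast\subseteq\N_{L_i/K}^\ast$ for $i=1,2$, and the relation $s^2=\N_{L_i/K}(s)$ for $s\in\mg{K}$ yields $\sq{K}\subseteq\N_{L_i/K}^\ast$ as well.

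For the reverse inclusion, my plan is to fix $a,b\in\mg{K}$ with $L_1=K(\sqrt{a})$ and $L_2=K(\sqrt{b})$, and to exploit the $K$-basis $\{1,\sqrt{a},\sqrt{b},\sqrt{ab}\}$ of $L$ provided by the linear disjointness hypothesis $[L:K]=4$. Given $c\in\N_{L_1/K}^\ast\cap\N_{L_2/K}^\ast$, I would pick $X,Y,U,V\in K$ with $c=X^2-aY^2=U^2-bV^2$ and consider
$$\mu=(X+U)+Y\sqrt{a}+V\sqrt{b}\in L.$$
The central step is a direct computation of $\N_{L/K}(\mu)$: first evaluate $\N_{L/L_2}(\mu)=\bigl((X+U)^2+bV^2-aY^2\bigr)+2(X+U)V\sqrt{b}\in L_2$ by pairing $\mu$ with its $\Gal(L/L_2)$-conjugate, and then apply $\N_{L_2/K}$. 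Substituting the relations $aY^2=X^2-c$ and $bV^2=U^2-c$ produces a clean cancellation that collapses the expression to the identity
$$\N_{L/K}(\mu)=4(X+U)^2c,$$
so that whenever $X+U\neq 0$ we read off $c\in\sq{K}\cdot\N_{L/K}^\ast$ directly.

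The residual case $X+U=0$ is handled by running the same computation on the variant $\mu'=(U-X)-Y\sqrt{a}+V\sqrt{b}$, which yields $\N_{L/K}(\mu')=4(U-X)^2c$. Both $X+U$ and $U-X$ vanish simultaneously only when $X=U=0$, in which case $c=-aY^2=-bV^2$ with $Y,V\in\mg{K}$; this forces $a/b\in\sq{K}$, hence $ab\in\sq{K}$, contradicting the hypothesis $[L:K]=4$.

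The main obstacle is the norm calculation itself: the pairing of the four Galois conjugates and the substitution-induced cancellation that reduces the expanded expression to a square multiple of $c$ are where all the content lies. The linear disjointness hypothesis is used in exactly one place, namely to exclude the degenerate configuration $X=U=0$, ensuring that at least one of the two candidate elements $\mu,\mu'$ produces the desired witness.
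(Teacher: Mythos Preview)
Your proof is correct. The paper itself does not prove this lemma but simply refers to \cite[Lemma 2.1]{PTW12}, so there is no in-paper argument to compare against. What you supply is a self-contained elementary computation: after the substitutions $aY^2=X^2-c$ and $bV^2=U^2-c$, the intermediate norm collapses to $\N_{L/L_2}(\mu)=2(X+U)(U+V\sqrt{b})$, from which $\N_{L/K}(\mu)=4(X+U)^2\,c$ is immediate, and your handling of the degenerate case $X=U=0$ via the linear-disjointness hypothesis is clean. The standing assumption $\car(K)\neq 2$ in the paper makes the factor~$4$ harmless.
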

\begin{proof}
See e.g.~\cite[Lemma 2.1]{PTW12} for a proof.
\end{proof}

\begin{lem}\label{L:AP-trick}
Let $m\in\nat$. Let $\pi$ and $\rho$ be quadratic forms over $K$ such that 
$\dim(\pi)<2^{m+1}$ and $\rho\in \I^{m+1}K$.
Then $\G(\pi\perp\rho)=\G(\pi)\cap\G(\rho)$.
\end{lem}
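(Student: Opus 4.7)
The plan is to prove the non-trivial inclusion $\G(\pi\perp\rho)\subseteq\G(\pi)\cap\G(\rho)$, since the reverse inclusion is immediate: any $a\in\mg{K}$ with $a\pi\simeq\pi$ and $a\rho\simeq\rho$ obviously satisfies $a(\pi\perp\rho)\simeq\pi\perp\rho$.

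For the forward inclusion, I would fix $a\in\G(\pi\perp\rho)$ and pass to the Witt ring $\W K$. The condition $a(\pi\perp\rho)\simeq\pi\perp\rho$ translates to $\lla a\rra\otimes(\pi\perp\rho)=0$ in $\W K$, equivalently
\[
\lla a\rra\otimes\pi \,=\, -\lla a\rra\otimes\rho \quad\text{in }\W K.
\]
The key observation is an ideal-theoretic upper bound and a dimension-theoretic lower bound that together force both sides to vanish. On the one hand, since $\rho\in\I^{m+1}K$, we have $\lla a\rra\otimes\rho\in\I^{m+2}K$, and hence also $\lla a\rra\otimes\pi\in\I^{m+2}K$. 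On the other hand, the form $\lla a\rra\otimes\pi$ has dimension $2\dim(\pi)<2^{m+2}$. By the Arason--Pfister Hauptsatz, the only element of $\I^{m+2}K$ represented by a form of dimension less than $2^{m+2}$ is zero; so $\lla a\rra\otimes\pi=0$ in $\W K$, and consequently $\lla a\rra\otimes\rho=0$ in $\W K$ as well.

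To conclude, I would translate these Witt-ring equalities back into isometries: $\lla a\rra\otimes\pi=0$ means that $\pi$ and $a\pi$ are Witt equivalent, and since they have the same dimension and are nondegenerate, we obtain $a\pi\simeq\pi$, i.e.\ $a\in\G(\pi)$. The same argument applied to $\rho$ yields $a\in\G(\rho)$.

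I do not anticipate a serious obstacle here: the only subtle point is checking that the dimension bound $2\dim(\pi)<2^{m+2}$ is strict so that the Arason--Pfister Hauptsatz applies (this uses the hypothesis $\dim(\pi)<2^{m+1}$ in its sharp form), and that Witt equivalence of forms of equal dimension upgrades to isometry. Both are standard.
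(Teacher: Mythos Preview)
Your proof is correct and follows essentially the same route as the paper's: both invoke the Arason--Pfister Hauptsatz on $\lla a\rra\otimes\pi$, using that it lies in $\I^{m+2}K$ (since it is Witt equivalent to $-\lla a\rra\otimes\rho$) and has dimension $<2^{m+2}$, to conclude that it is hyperbolic and hence $a\in\G(\pi)\cap\G(\rho)$.
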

\begin{proof}
Clearly, $\G(\pi)\cap\G(\rho)\subseteq \G(\pi\perp\rho)$.
To show the converse inclusion, consider $a\in\G(\pi\perp\rho)$ arbitrary.
Then $\lla a\rra\otimes \pi \perp\lla a\rra\otimes \rho$ is hyperbolic.
Hence $\lla a\rra\otimes\pi\in I^{m+2}K$.
Since $\dim(\lla a\rra\otimes\pi)<2^{m+2}$, we conclude from the Arason-Pfister Hauptsatz \cite[Theorem 23.7]{EKM08} that $\lla a\rra\otimes\pi$ is hyperbolic, whereby $a\in\G(\pi)$. As $a\in\G(\pi\perp\rho)$, we conclude that $a\in\G(\rho)$, whereby $a\in\G(\pi)\cap\G(\rho)$.
\end{proof}
 
\begin{thm}\label{2Pfister-ratcon}
Let $\vf$ be a quadratic form over $K$ which is Witt equivalent to a sum of two scaled Pfister forms over $K$ of different dimension.
Then $\G(\vf)=\H(\vf)$ and ${\bf PSim}^+(\varphi)$ is rationally connected.
\end{thm}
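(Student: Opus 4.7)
The plan is to combine three ingredients established above: \Cref{L:AP-trick}, \Cref{P:pfiratcon}, and the biquadratic norm trick \Cref{biquatrick}. Since $\vf$ has even dimension and both $\G$ and $\H$ depend only on the Witt class, I may first reduce to the case $\vf = a_1\pi_1 \perp a_2\pi_2$ with $\pi_1,\pi_2$ Pfister forms of dimensions $2^m$ and $2^n$ respectively, $m<n$, and $a_1,a_2\in\mg{K}$. Any scaled Pfister form of dimension $2^n$ lies in $\I^nK$, so $a_2\pi_2\in\I^nK\subseteq\I^{m+1}K$ while $a_1\pi_1$ has dimension $2^m<2^{m+1}$, and \Cref{L:AP-trick} then gives $\G(\vf)=\G(\pi_1)\cap\G(\pi_2)$. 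This reduction is the heart of the proof and is exactly what the hypothesis on different dimensions unlocks.

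Next, for a fixed $a\in\G(\vf)$ I would show $a\in\H(\vf)$; the reverse inclusion is automatic. I may assume $a\notin\sq{K}$. By \Cref{P:pfiratcon} applied to each $\pi_i$, I obtain a field extension $L_i/K$ with $[L_i:K]\leq 2$ such that $(\pi_i)_{L_i}$ is hyperbolic and $a\in\N_{L_i/K}^\ast$. I then proceed by a case split. If some $L_i=K$, then $\pi_i$ is hyperbolic over $K$, so $\vf$ is Witt equivalent to a single scaled Pfister form $a_j\pi_j$ and $a\in\G(\pi_j)=\H(\pi_j)=\H(\vf)$ by \Cref{P:pfiratcon} directly. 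If $L_1=L_2$ is a nontrivial quadratic extension, then $\vf_{L_1}$ is hyperbolic, so $a\in\N_{L_1/K}^\ast\subseteq\Hyp(\vf)\subseteq\H(\vf)$. Finally, if $L_1\neq L_2$ are distinct, hence linearly disjoint, quadratic extensions, I set $L=L_1\otimes_K L_2$; then $L$ splits both Pfister forms, so $\vf_L$ is hyperbolic and $\N_{L/K}^\ast\subseteq\Hyp(\vf)$. Invoking \Cref{biquatrick} yields $a\in\N_{L_1/K}^\ast\cap\N_{L_2/K}^\ast=\sq{K}\cdot\N_{L/K}^\ast\subseteq\H(\vf)$, completing the proof that $\G(\vf)=\H(\vf)$.

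For the rational connectedness assertion I will appeal to \Cref{Mer:T1}, which reduces the problem to verifying the same equality $\G(\vf_{K'})=\H(\vf_{K'})$ over every field extension $K'/K$. The hypothesis on $\vf$ is preserved under base change, since $(\pi_i)_{K'}$ remain Pfister forms of the same dimensions; in the degenerate case where some $(\pi_i)_{K'}$ becomes hyperbolic, $\vf_{K'}$ is Witt equivalent to a single scaled Pfister form and the equality follows directly from \Cref{P:pfiratcon}. Hence ${\bf PSim}^+(\vf)$ is rationally connected.

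The main obstacle is the dimensional asymmetry step: \Cref{L:AP-trick} requires exactly one summand to sit strictly below a power-of-two threshold that the other meets from above, and without the hypothesis $\dim\pi_1\neq\dim\pi_2$ a similarity factor of $\vf$ could mix contributions from the two Pfister forms in a manner not controlled by either one individually. Once that reduction is secured, the case analysis via the biquadratic norm trick is essentially formal.
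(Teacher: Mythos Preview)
Your proof is correct and follows essentially the same approach as the paper's: both use \Cref{L:AP-trick} (exploiting the dimensional asymmetry to get $\G(\vf)=\G(\pi_1)\cap\G(\pi_2)$), then \Cref{P:pfiratcon} to produce the two splitting extensions, and finally \Cref{biquatrick} to combine them into a single multiquadratic extension landing in $\H(\vf)$. The paper compresses your case split (``$L_i=K$'', ``$L_1=L_2$'', ``$L_1\neq L_2$'') into the single phrase ``there exists a multiquadratic field extension $M/K$ with $[M:K]\leq 4$'', but the underlying argument is identical.
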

\begin{proof}
Let $m,n\in\nat$ be such that $\vf$ is Witt equivalent to $\pi\perp\rho$ for an $m$-fold Pfister form $\pi$ and an $n$-fold Pfister form $\rho$ over $K$.
It follows by \Cref{L:AP-trick} that $\G(\vf)=\G(\pi)\cap\G(\rho)$.
Note that $ \sq{K}\Hyp_2(\vf)\subseteq\H(\vf)\subseteq \G(\vf)$. To show the opposite inclusion,
consider an element $a\in\G(\vf)\setminus\sq{K}$. 
Then $a\in\G(\pi)\cap\G(\rho)$.
Since $\pi$ and $\rho$ are scaled Pfister forms, it follows by \Cref{P:pfiratcon} that there exist field extensions $L/K$ and $L'/K$
of degree at most $2$ such that $\pi_L$ and $\rho_{L'}$ are hyperbolic and $a\in\N_{L/K}^\ast\cap\N_{L'/K}^\ast$.
In particular, in view of \Cref{biquatrick}  there exists a multiquadratic field extension $M/K$ with $[M:K]\leq 4$ such that $\vf_M$ is hyperbolic and $a\in\sq{K}\cdot\N^\ast_{M/K}$.
This  shows that $\G(\varphi)\subseteq\sq{K}\Hyp_2(\vf)$.
Hence we have $\G(\vf)=\sq{K}\Hyp_2(\vf)=\H(\vf)$.

Note that the hypothesis on $\vf$ is stable under field extensions. Hence we obtain also for every field extension $K'/K$ that $\G(\varphi_{K'})=\H(\varphi_{K'})$.
By \Cref{Mer:T1}, this shows that ${\bf PSim}^+(\vf)$ is rationally connected.
\end{proof}

\begin{thm}\label{h2ratcon}
    Let $\vf$ be a quadratic form over $K$ such that $h(\vf)\leq 2$.
    Then $\G(\vf)=\H(\vf)=\sq{K}\Hyp_2(\vf)$ and ${\rm\bf PSim}^+(\vf)$ is rationally connected.
\end{thm}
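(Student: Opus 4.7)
The plan is to establish the inclusion $\G(\vf)\subseteq\sq{K}\Hyp_2(\vf)$; the chain $\sq{K}\Hyp_2(\vf)\subseteq\H(\vf)\subseteq\G(\vf)$ is already immediate from the definitions and Scharlau's norm principle. Since the hypothesis $h(\vf)\leq 2$ is stable under arbitrary field extensions (splitting patterns can only shrink), once this inclusion is established over $K$ the same argument applies over every $K'/K$, and \Cref{Mer:T1} will yield the rational connectedness of ${\bf PSim}^+(\vf)$.

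I will argue by induction on $h(\vf)$. The base case $h(\vf)\leq 1$ is handled by \Cref{P:pfiratcon}: if $\vf$ is not hyperbolic, it is Witt equivalent to a scaled Pfister form $c\pi$, and since $\G$ and $\Hyp_2$ depend only on the Witt class of the form and are insensitive to the scalar $c$, one obtains $\G(\vf)=\G(\pi)=\Hyp_2(\pi)=\Hyp_2(\vf)$.

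For the inductive step with $h(\vf)=2$, fix $a\in\G(\vf)\setminus\sq{K}$ and apply \Cref{L:beta} to produce a quadratic extension $L_1/K$ with $a\in\N_{L_1/K}^\ast$ and $\wi(\vf_{L_1})>\wi(\vf)$, so that $h(\vf_{L_1})\leq 1$. If $\vf_{L_1}$ is hyperbolic, then $L_1/K$ itself witnesses $a\in\Hyp_2(\vf)$. Otherwise $\vf_{L_1}$ is Witt equivalent to a scaled Pfister form $c\pi$ over $L_1$, and $a\in\G(\pi)$; \Cref{P:pfiratcon} applied over $L_1$ then yields either $a\in\sq{L_1}$ (which forces $L_1=K(\sqrt{a})$) or a quadratic extension $M/L_1$ with $\pi_M$ (hence $\vf_M$) hyperbolic and $a\in\N_{M/L_1}^\ast$.

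The main obstacle is then to descend from the tower to the base field: from $a\in\N_{L_1/K}^\ast\cap\N_{M/L_1}^\ast$ with $\vf_M$ hyperbolic one needs $a\in\sq{K}\N_{M/K}^\ast\subseteq\sq{K}\Hyp_2(\vf)$, and this is not formal. My strategy, when $M/K$ is biquadratic containing $L_1$, is to invoke \Cref{biquatrick}, which reduces the question to whether $a$ is also a norm from the other quadratic subfield of $M$; this extra norm condition should be extracted from $a\in\G(\pi)$ together with the Pfister structure over $L_1$. When $M/K$ is cyclic of degree $4$, or in the subcase $L_1=K(\sqrt{a})$, I plan to either reselect the binary subform in \Cref{L:beta} so as to arrange biquadraticity of $M/K$, or to produce a further quadratic extension $L_2/L_1$ by a second application of \Cref{L:beta} to $\vf_{L_1}$ and then assemble everything through \Cref{biquatrick} on the multiquadratic compositum containing $M$.
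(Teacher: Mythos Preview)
Your plan has a genuine gap at the descent step. After passing to $L_1$ and applying \Cref{P:pfiratcon} there, you obtain $a\in\N_{L_1/K}^\ast$ and $a\in\N_{M/L_1}^\ast$ for some quadratic $M/L_1$ with $\vf_M$ hyperbolic. These two conditions do \emph{not} combine to give $a\in\sq{K}\N_{M/K}^\ast$, even when $M/K$ happens to be biquadratic. In that case \Cref{biquatrick} asks for $a\in\N_{L_2/K}^\ast$ where $L_2$ is the other quadratic subfield of $M$; but $a\in\N_{M/L_1}^\ast$ only says that $\lla a\rra_{L_1}$ is split by $M$, which says nothing about the $K$-form $\lla a\rra$ over $L_2$. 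Your proposed remedies (extracting the missing norm from the Pfister structure, or reselecting the binary subform) are not substantiated: the Pfister form $\pi$ lives over $L_1$ and need not descend, and a second application of \Cref{L:beta} to $\vf_{L_1}$ again produces an extension of $L_1$, not of $K$, so the same obstruction reappears.

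The paper avoids this entirely by never leaving $K$ for the second step. Writing $L_1=K(\sqrt{d})$, one decomposes $\vf_\an\simeq\lla d\rra\otimes\vartheta\perp\psi$ over $K$ with $\psi_{L_1}$ anisotropic (this is \cite[Cor.~22.12]{EKM08}); then $(\vf_{L_1})_\an\simeq\psi_{L_1}$, and crucially $\psi$ is a form over $K$. Since $a\in\G(\vf)\cap\G(\lla d\rra)$, one gets $a\in\G(\psi)$, and now \Cref{L:beta} applied to $\psi$ over $K$ produces a quadratic extension $L'/K$ with $a\in\N_{L'/K}^\ast$ and $\psi_{L'}$ isotropic. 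Anisotropy of $\psi_{L_1}$ forces $L_1$ and $L'$ to be linearly disjoint, so $M=L_1\otimes_K L'$ is biquadratic, \Cref{biquatrick} applies directly, and $h(\vf)\leq 2$ gives $\vf_M$ hyperbolic. The whole point is that both norm conditions are obtained from $K$, so no descent is needed.
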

\begin{proof}
    Since $\sq{K}\Hyp_2(\vf)\subseteq\H(\vf)\subseteq \G(\vf)$ holds in general, we need to show that $\G(\varphi)\subseteq\sq{K}\Hyp_2(\vf)$.
    We may assume that $\dim(\vf)$ is even. 
    If $\vf$ is hyperbolic, then $\G(\vf)=\mg{K}=\Hyp_2(\vf)$. Assume that $\vf$ is not hyperbolic.
    Consider an element $a\in\G(\vf)$. 
    By \Cref{L:beta}, there exists a quadratic field extension $L/K$ such that  $\wi(\vf_L)>\wi(\vf)$ and $a\in\N_{L/K}^\ast$.
    Let $d\in\mg{K}\setminus\sq{K}$ be such that $L=K(\sqrt{d})$.
    By \cite[Cor.~22.12]{EKM08}, we can write $\vf_\an\simeq \lla d\rra\otimes \vartheta\perp\psi$ for some quadratic forms $\vartheta$ and $\psi$ over $K$ such that $\psi_L$ is anisotropic. Then $(\vf_L)_\an=\psi_L$.
    We have that $a\in\G(\vf)\cap\G(\lla d\rra)\subseteq\G(\psi)$ and $\psi$ is even-dimensional.
    If $\psi$ is trivial ($0$-dimensional), then $\vf_L$ is hyperbolic, whereby $a\in\H(\vf)$.
    Assume that $\psi$ is not trivial.
    By \Cref{L:beta}, there exists a quadratic extension $L'/K$ such that $\psi_{L'}$ is isotropic and $a\in\N_{L'/K}^\ast$.
    Since $\psi_{L}$ is anisotropic, it follows that $L/K$ and $L'/K$ are linearly disjoint.
    We set $M=L\otimes_KL'$.
    By \Cref{biquatrick}, we have that $\N_{L/K}^\ast\cap\N_{L'/K}^\ast=\sq{K}\cdot\N_{M/K}^\ast$.
    Since $\psi_{L'}$ is isotropic, so is $\psi_M$.
    Since $\psi_L=(\vf_L)_\an$, we obtain that $\wi(\vf_M)>\wi(\vf_L)>\wi(\vf)$.
    As $h(\psi)\leq 2$, it follows that $\vf_M$ is hyperbolic.
    Hence 
    $a\in\N_{L/K}^\ast\cap\N_{L'/K}=\sq{K}\cdot\N_{M/K}^\ast\subseteq\sq{K}\Hyp_2(\vf)$.
    This argument shows that $\G(\varphi)\subseteq \sq{K}\Hyp_2(\vf)$.
Therefore we have the equalities $\G(\varphi)=\sq{K}\Hyp_2(\vf)=\H(\varphi)$.

For any field extension $K'/K$, using that $h(\varphi_{K'})\leq h(\vf)\leq 2$, we obtain that $\G(\varphi_{K'})=\H(\varphi_{K'})$. 
Having this for all field extensions, we conclude by \Cref{Mer:T1} that ${\bf PSim}^+(\vf)$ is rationally connected.
\end{proof}

\begin{thm}[Karpenko]\label{T:Karpenko}
    Let $n\in\nat$ and consider a quadratic form $\varphi\in\I^nK$.
    Then 
    $$\dim(\varphi)\geq (2^{h(\varphi)}-1)\cdot (2^{n+1-h(\varphi)})\,.$$
\end{thm}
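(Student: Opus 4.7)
The plan is to argue by induction on the height $h=h(\vf)$, using the generic splitting tower of $\vf$. Since $\dim(\vf)\geq\dim(\vf_\an)$ and $h(\vf)=h(\vf_\an)$, I may assume that $\vf$ is anisotropic. The base case $h\leq 1$ reduces to elementary considerations: for $h=0$ the form $\vf$ vanishes, so the bound is trivial; for $h=1$, the form $\vf$ is Witt equivalent to a nonzero scaled Pfister form by \cite[Prop.~25.6]{EKM08}, and $\vf\in \I^nK$ anisotropic then forces $\dim(\vf)\geq 2^n=(2^1-1)\cdot 2^n$ by the Arason-Pfister Hauptsatz.

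For the inductive step, I pass to the function field $K_1=K(\vf)$ of the projective quadric of $\vf$ and consider the anisotropic part $\vf_1$ of $\vf_{K_1}$. Since $\I^nK$ is stable under scalar extension, $\vf_1\in \I^nK_1$, and by the defining property of the generic splitting tower one has $h(\vf_1)=h-1$. The induction hypothesis yields
\[\dim(\vf_1)\geq (2^{h-1}-1)\cdot 2^{n+2-h}.\]
Writing $\dim(\vf)=\dim(\vf_1)+2\mf{i}_1(\vf)$, where $\mf{i}_1(\vf)$ denotes the first higher Witt index of $\vf$, and using the elementary identity $(2^h-1)\cdot 2^{n+1-h}-(2^{h-1}-1)\cdot 2^{n+2-h}=2^{n+1-h}$, the target inequality reduces to the single estimate $\mf{i}_1(\vf)\geq 2^{n-h}$.

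The main obstacle is precisely this lower bound on the first higher Witt index of an anisotropic form in $\I^nK$ of height $h$. This is the substantive content of Karpenko's theorem on first higher Witt indices of forms lying in a power of the fundamental ideal, and I do not see a path that avoids invoking it within elementary quadratic-form theory. Its proof rests on an analysis of the $2$-primary Chow motive of the projective quadric $X_\vf$, applying either Voevodsky's motivic Steenrod squares or Vishik's symmetric operations to extract a Rost-type summand of the required rank from the hypothesis $\vf\in\I^nK$. Once this bound is granted, the inductive packaging above is mechanical, resting only on the stability of $\I^nK$ under scalar extension and the recursive description of the anisotropic kernels in the generic splitting tower.
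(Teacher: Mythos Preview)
Your reduction is arithmetically sound, and you are honest that the crux---the lower bound $\mf{i}_1(\vf)\geq 2^{n-h}$ for anisotropic $\vf\in\I^nK$ of height $h$---cannot be reached by elementary means and must come from Karpenko's motivic machinery. But this is precisely where your proposal collapses back into the paper's own argument: the paper's proof is the single line ``This follows from \cite[Cor.~6.2]{Kar04}.'' In other words, the theorem is quoted as a black box from Karpenko, and your inductive wrapper does not circumvent that dependence.

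There is also a mild circularity risk in your presentation. The statement $\mf{i}_1(\vf)\geq 2^{n-h(\vf)}$ is not weaker than the theorem you are trying to prove; applied along the generic splitting tower it is equivalent to the full set of relative-index bounds $\mf{i}_j(\vf)\geq 2^{n-h+j-1}$, whose sum \emph{is} the dimension inequality. So your ``reduction'' is really a reformulation, and you should not describe it as isolating a smaller lemma. If you want to keep the inductive framing, the clean thing to do is cite the relative-index bounds from \cite{Kar04} directly (this is the content of Cor.~6.2 there) rather than invoking an unspecified ``theorem on first higher Witt indices''; otherwise a reader may look for a separate result of Karpenko's with that name and not find it stated in the form you need.

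In short: your outline is correct, but it is not a different proof---it is an expository unpacking of the same citation the paper makes, and the paper's one-line version is the more transparent choice for a result being imported wholesale.
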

\begin{proof}
    This follows from \cite[Cor.~6.2]{Kar04}.
\end{proof}

\begin{cor}
    Let $n\in\nat$ and let $\varphi$ be a quadratic form in $\I^nK$ such that $\dim(\varphi)<7\cdot 2^{n-2}$. Then
      ${\bf PSim}^+(\varphi)$ is rationally connected.
\end{cor}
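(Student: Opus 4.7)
The plan is to bound the height $h(\varphi)$ using Karpenko's theorem and then invoke \Cref{h2ratcon}. The whole argument is short and essentially arithmetic, with no real obstacle.

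First I would write $h=h(\varphi)$ and consider the function $f(h)=(2^h-1)\cdot 2^{n+1-h}$ that appears on the right-hand side of \Cref{T:Karpenko}. Observe that
$$\frac{f(h+1)}{f(h)}=\frac{2^{h+1}-1}{2(2^h-1)}=\frac{2^{h+1}-1}{2^{h+1}-2}>1$$
for all $h\geq 1$, so $f$ is strictly increasing on positive integers. In particular, $f(h)\geq f(3)=7\cdot 2^{n-2}$ whenever $h\geq 3$. Therefore, by \Cref{T:Karpenko}, if $h(\varphi)\geq 3$ then $\dim(\varphi)\geq 7\cdot 2^{n-2}$, contradicting the hypothesis on $\dim(\varphi)$. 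Hence $h(\varphi)\leq 2$.

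Having established that $h(\varphi)\leq 2$, the conclusion that ${\bf PSim}^+(\varphi)$ is rationally connected follows immediately from \Cref{h2ratcon}. The only point worth mentioning is that for small $n$ (namely $n\leq 2$) the threshold $7\cdot 2^{n-2}$ may be small or non-integral, but in those cases $\dim(\varphi)$ is forced to be very small and the conclusion already follows from the results recalled in the introduction (indeed, $h(\varphi)\leq 1$ trivially). No additional machinery is required.
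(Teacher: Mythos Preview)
Your proof is correct and follows exactly the same route as the paper, which simply says the statement is ``immediate from \Cref{T:Karpenko} and \Cref{h2ratcon}.'' You have merely made explicit the arithmetic step (the monotonicity of $h\mapsto (2^h-1)2^{n+1-h}=2^{n+1}-2^{n+1-h}$) that the paper leaves to the reader; the remark about small $n$ is harmless but unnecessary, since your main argument already covers those cases.
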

\begin{proof}
    This is immediate from \Cref{T:Karpenko} and \Cref{h2ratcon}.
\end{proof}

\begin{conj}
    For every integer $n\geq 3$, there exists a field $L$ such that $\car(L)\neq 2$ and a quadratic form $\varphi\in \I^nL$ with $\dim(\varphi)=7\cdot 2^{n-2}$ such that ${\bf PSim}^+(\varphi)$ is not rationally connected.
\end{conj}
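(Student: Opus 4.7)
The plan is to establish the conjecture by adapting the Gille--Tignol--Sivatski construction underlying \Cref{T3} and \Cref{T4} so as to produce forms of the smaller dimension $7\cdot 2^{n-2}$ matching Karpenko's lower bound in \Cref{T:Karpenko} with height exactly $3$.

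For $n=3$, the conjecture is already settled: \Cref{T4} specialised to $n=3$ yields, over $K=k(t_0,t_1,t_2,t_3)$ with $|\scg{k}|\geq 4$, a form $\varphi\in\I^3K$ of dimension $2(2^3-1)=14=7\cdot 2^{3-2}$ with ${\bf PSim}^+(\varphi)$ not rationally connected. By \Cref{h2ratcon} this forces $h(\varphi)=3$, and the underlying form is Witt-equivalent to a difference $\pi_1-\pi_2$ of two $3$-fold Pfister forms over $K$ sharing only the element $1$.

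For $n\geq 4$, the plan is to construct, over an iterated Laurent series field $L=k(\!(u_1)\!)\cdots(\!(u_m)\!)$ with $m$ suitably chosen, a form $\varphi$ Witt-equivalent to $\pi_1-\pi_2$ where $\pi_1=\sigma_1\otimes\rho$ and $\pi_2=\sigma_2\otimes\rho$ are $n$-fold Pfister forms sharing the common $(n-3)$-fold subform $\rho=\lla s_1,\ldots,s_{n-3}\rra$ built from some of the $u_i$'s, and $\sigma_1,\sigma_2$ are $3$-fold Pfister forms over $L$ engineered via a Sivatski-style triquadratic norm anomaly (see \Cref{S:multiquad}). A direct computation then gives $\dim\varphi=2\cdot 2^n-2\cdot 2^{n-3}=7\cdot 2^{n-2}$, anisotropy of $\varphi$ over $L$ being secured by Springer's theorem applied level by level; moreover $\varphi\in\I^nL$. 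The triquadratic anomaly provides an element $a\in\mg{L}$ such that $\lla a\rra\otimes\sigma_i=0$ in $\W L$ for $i=1,2$, hence $\lla a\rra\otimes\varphi=0$ in $\W L$, i.e.~$a\in\G(\varphi)$.

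The crux and main obstacle is to verify that $a\notin\H(\varphi)$, so that \Cref{Mer:T1} yields that ${\bf PSim}^+(\varphi)$ is not rationally connected. A finite extension $F/L$ hyperbolizes $\varphi$ precisely when $(\pi_1)_F\simeq(\pi_2)_F$ in $\W F$; such extensions split into two types: (i) those hyperbolizing both $(\pi_1)_F$ and $(\pi_2)_F$ simultaneously, and (ii) ``mixed'' extensions making the two Pfister forms isometric over $F$ without either being hyperbolic. For type (i), \Cref{P:pfiratcon} together with the biquadratic norm trick (\Cref{biquatrick}) and the henselian reduction of \Cref{S:henselian-stuff} should reduce the relevant norms, modulo $\sq{L}$, to norms over the residue field of $L$, where the triquadratic anomaly excludes~$a$. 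Type (ii) is the decisive difficulty: it is not captured by Pfister-form splitting arguments alone and demands a detailed analysis of the generic splitting of $\pi_1-\pi_2$ à la Karpenko. Notably, this is the same kind of obstacle underlying the open question recorded in the remark preceding \Cref{T2}, on whether a tensor product of two even-dimensional forms can admit a non-rationally-connected ${\bf PSim}^+$; a successful treatment of the mixed case for the $\varphi$ above would simultaneously shed light on that open question.
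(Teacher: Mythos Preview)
The statement in the paper is a \emph{conjecture}, not a theorem. The paper proves it only for $n=3$, via \Cref{14I3ex}, and explicitly leaves $n\geq 4$ open. Your treatment of $n=3$ by specialising \Cref{T4} is correct and coincides with the paper's own argument (both rest on \Cref{In-fork-construction}).

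For $n\geq 4$ your proposal is not a proof but a plan with a gap that you yourself identify. Concretely, your form is $\varphi\simeq\psi\otimes\rho$ with $\psi$ the $14$-dimensional anisotropic part of $\sigma_1\perp-\sigma_2$ and $\rho=\lla s_1,\dots,s_{n-3}\rra$. Since $\dim\psi=14$ and $\dim\rho=2^{n-3}$ are both even, $\varphi$ is a tensor product of two even-dimensional forms---precisely the situation which the paper flags (in the paragraph \emph{following} \Cref{T2}, not preceding it) as having \emph{no confirmed instance} with ${\bf PSim}^+$ not rationally connected. So your construction for $n\geq 4$ would, if it worked, simultaneously resolve that open problem.

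The specific obstruction is the one you call ``type~(ii)''. Any finite extension $F/L$ with $\rho_F$ anisotropic and $(\sigma_1-\sigma_2)_F$ lying in the annihilator of $\rho_F$ in $\mathsf{W}F$ (for instance, $(\sigma_1)_F\simeq(\sigma_2)_F$ while both remain anisotropic) hyperbolises $\varphi$ and hence contributes its norms to $\H(\varphi)$; these extensions are not governed by the henselian reduction of \Cref{S:henselian-stuff} for the family $(\lla a_I\rra)_I$, and the biquadratic norm trick together with \Cref{P:pfiratcon} gives no control over them. Without an argument excluding $a$ from the norms of all such $F$, the inclusion $a\notin\H(\varphi)$ is unproven, and so is the conjecture for $n\geq 4$. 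This matches the status in the paper.
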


For $n=3$, we will confirm this conjecture in \Cref{14I3ex}.

\section{Conditions on powers of the fundamental ideal}
\label{S:fui-conditions}

In this section, we extend the results from \cite[Sect.~4]{AP22}  concerning quadratic forms.
There, fields of virtual cohomological dimension $2$ are treated.
This condition implies that $\I^3K(\sqrt{-1})=0$, so \cite[Cor.~4.3 \& Theorem~4.4]{AP22} are recovered for $n=2$ from the two results below, respectively.

\begin{prop}\label{P:virt-In-torsion-split}
    Let $n\in\nat\setminus\{0\}$ be such that $\I^{n+1} K(\sqrt{-1})=0$.
    Let $\varphi$ be torsion form  over $K$ with $\varphi\in \I^nK$.
    Then for every $a\in\mg{K}$, there there exists a finite $2$-extension $L/K$ such that $\varphi_{L}$ is hyperbolic and $a\in\N_{L/K}^\ast$.
    In particular $\Hyp_2(\vf)=\mg{K}$.
\end{prop}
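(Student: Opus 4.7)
The plan is to proceed by induction on $\dim(\vf)$, where the inductive statement is formulated uniformly for all triples $(F,\psi,a)$ with $F$ a field satisfying $\I^{n+1}F(\sqrt{-1})=0$, $\psi$ a torsion form in $\I^n F$, and $a\in \mg{F}$. First I would replace $\vf$ by $\vf_\an$ (which is again a torsion form in $\I^nK$) to reduce to the case where $\vf$ is anisotropic. The base case $\vf=0$ is immediate: take $L=K$, so that $\vf_L$ is hyperbolic and $a \in \N_{K/K}^\ast=\mg{K}$.

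For the inductive step with $\vf$ anisotropic of positive (even) dimension, the aim is to produce a quadratic extension $L_1=K(\sqrt{d})$ of $K$ and an element $b\in\mg{L_1}$ such that (i) $\vf_{L_1}$ is isotropic (so $(\vf_{L_1})_\an$ has strictly smaller dimension than $\vf$) and (ii) $\N_{L_1/K}(b)=a$. Granting this, the inductive hypothesis applied over $L_1$ to the torsion form $(\vf_{L_1})_\an\in \I^n L_1$ with the element $b$ yields a finite 2-extension $L/L_1$ such that $\vf_L$ is hyperbolic and $b\in \N_{L/L_1}^\ast$. By transitivity of the norm map, $a=\N_{L_1/K}(b)=\N_{L_1/K}(\N_{L/L_1}(c))=\N_{L/K}(c)$ for some $c\in L$, and $L/K$ is a finite 2-extension as a tower of two such. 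For the inductive hypothesis to be applicable over $L_1$, one needs $\I^{n+1}L_1(\sqrt{-1})=0$; this holds because $L_1(\sqrt{-1})$ is a quadratic (hence 2-primary algebraic) extension of $K(\sqrt{-1})$, and $\I^{n+1}$-vanishing persists under algebraic extensions of non-formally-real fields (equivalently, $\mathrm{cd}_2\leq n$ is monotone for closed subgroups of the absolute Galois group).

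The crux, and where the hypothesis $\I^{n+1}K(\sqrt{-1})=0$ enters essentially, is the construction of $L_1$ and $b$. The starting observation is that $\lla a\rra\otimes \vf\in \I^{n+1}K$ is torsion, so by the hypothesis it becomes hyperbolic over $K(\sqrt{-1})$; equivalently, $a\in \G(\vf_{K(\sqrt{-1})})$. Applying \Cref{L:beta} over $K(\sqrt{-1})$ to $\vf_{K(\sqrt{-1})}$ with similarity factor $a$ produces either hyperbolicity of $\vf_{K(\sqrt{-1})}$, or a quadratic extension $M=K(\sqrt{-1})(\sqrt{d'})$ with $d'$ the discriminant of a binary anisotropic subform of $(\vf_{K(\sqrt{-1})})_\an$ and with $a\in \N_{M/K(\sqrt{-1})}^\ast$. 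The main obstacle is the descent to $K$: to choose $d$ in $\mg{K}$ (rather than only in $\mg{K(\sqrt{-1})}$) such that $\vf_{K(\sqrt{d})}$ becomes isotropic and $a\in \N_{K(\sqrt{d})/K}^\ast$ simultaneously. This should be effected by exploiting the exact sequence
\[
\W K \xrightarrow{\lla-1\rra\cdot} \W K \to \W K(\sqrt{-1})\to 0,
\]
whose kernel identifies forms hyperbolic over $K(\sqrt{-1})$ with multiples of $\lla-1\rra$, combined with the biquadratic norm trick of \Cref{biquatrick} to align the isotropy and norm conditions. Once $\Hyp_2(\vf)=\mg{K}$ is established for arbitrary $a$, the final assertion is immediate from the definition of $\Hyp_2(\vf)$ as a subgroup of $\mg{K}$.
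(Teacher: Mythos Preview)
Your inductive scaffolding matches the paper's exactly, but the construction of the quadratic step $L_1/K$ contains a genuine gap. You only observe that $\lla a\rra\otimes\vf$ becomes hyperbolic over $K(\sqrt{-1})$, i.e.\ that $a\in\G(\vf_{K(\sqrt{-1})})$, and then attempt to descend this information to $K$ via the exact sequence for $\W K\to\W K(\sqrt{-1})$ and the biquadratic norm trick. You acknowledge this descent as ``the main obstacle'' but do not actually carry it out, and there is no evident reason why the vague combination you sketch should succeed in producing a $d\in\mg{K}$ with both $\vf_{K(\sqrt{d})}$ isotropic and $a\in\N^\ast_{K(\sqrt{d})/K}$.

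The missing idea is that no descent is needed at all: the hypothesis $\I^{n+1}K(\sqrt{-1})=0$ already forces $\I^{n+1}K$ to be \emph{torsion-free} (this is \cite[Cor.~35.27]{EKM08}). Since $\lla a\rra\otimes\vf$ is a torsion element of $\I^{n+1}K$, it is therefore hyperbolic over $K$ itself, so $a\in\G(\vf)$ directly. Now \Cref{L:beta} applies over $K$ (not over $K(\sqrt{-1})$) and immediately yields the quadratic extension $K'/K$ with $\wi(\vf_{K'})>\wi(\vf)$ and $a\in\N^\ast_{K'/K}$; the induction then proceeds exactly as you describe. Once you insert this torsion-freeness fact, your argument becomes the paper's proof verbatim.
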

\begin{proof}
Consider $a\in\mg{K}$.
We prove by induction on $\dim(\vf_\an)$ that $a\in\N_{L/K}^\ast$ for some finite $2$-extension $L/K$, whereby $a\in\Hyp(\vf)$.
Note that $\dim(\vf)$ is even, because $\vf\in\I^{n}K\subseteq \I K$.
If $\dim(\vf_\an)=0$, then $\vf$ is hyperbolic, and the claim holds with $L=K$.
Assume now that $\dim(\vf_\an)>0$.
As $\I^{n+1}K(\sqrt{-1})=0$, we have by \cite[Cor.~35.27]{EKM08} that $\I^{n+1}K$ is torsion-free.
Since $\vf\in\I^nK$ and $\vf$ is torsion, $\lla a\rra\otimes \vf$ is hyperbolic, whereby $a\in\G(\vf)$.
By \Cref{L:beta}, there exists a quadratic field extension $K'/K$ and some $a'\in\mg{K'}$ such that $\wi(\vf_{K'})>\wi(\vf)$ and $a=\N_{K'/K}(a')$. Hence $\dim((\vf_{K'})_\an)<\dim(\vf_\an)$, and we have $\vf_{K'}\in\I^nK'$.
By \cite[Cor.~35.8]{EKM08}, since $\I^{n+1}K(\sqrt{-1})=0$, we have that $\I^{n+1}K'(\sqrt{-1})=0$.
Therefore, by the induction hypothesis, there exists a finite $2$-extension $L/K'$ such that $(\vf_{K'})_L$ is hyperbolic and $a'\in\N_{L/K'}^\ast$. Then $\vf_L$ is hyperbolic, $L/K$ is a finite $2$-extension and $a=\N_{K'/K}(a')\in\N_{K'/K}(\N_{L/K'}^\ast)=\N_{L/K}^\ast$.
\end{proof}

\begin{thm}\label{T:In-G-H}
Assume that $\I^{n+1} K(\sqrt{-1})=0$.
Let $\varphi$ be a quadratic form over $K$. 
If $\varphi\equiv\pi\bmod \I^nK$ for a scaled Pfister form $\pi$ over $K$, then
$\G(\varphi)=\H(\varphi)$. 
\end{thm}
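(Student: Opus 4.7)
Since $\H(\vf)\subseteq\G(\vf)$ holds by Scharlau's norm principle, the plan is to prove $\G(\vf)\subseteq\H(\vf)$. Fix $a\in\G(\vf)$; we may assume $a\notin\sq K$, for otherwise $a\in\H(\vf)$ trivially. Write $\pi=c\sigma$ with $\sigma$ an $m$-fold Pfister form over $K$. If $m<n$, then \Cref{L:AP-trick}---applied to $\pi$ and any form $\rho\in\I^nK$ Witt equivalent to $\vf-\pi$---gives $\G(\vf)=\G(\pi)\cap\G(\rho)$, whence $a\in\G(\pi)$; \Cref{P:pfiratcon} then provides an extension $L_1/K$ of degree at most $2$ with $\pi_{L_1}$ hyperbolic and $a=\N_{L_1/K}(a_1)$ for some $a_1\in\mg{L_1}$. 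If $m\geq n$, then $\vf\in\I^nK$; set $L_1=K$ and $a_1=a$. In either case $\vf_{L_1}\in\I^nL_1$ and $\I^{n+1}L_1(\sqrt{-1})=0$ by \cite[Cor.~35.8]{EKM08}.

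The core idea is to find a quadratic extension $F'=L_1(\sqrt d)$ satisfying both (i) $a_1\in\N_{F'/L_1}^\ast$ and (ii) $\vf_{F'}$ is torsion. Given such an $F'$, write $a_1=\N_{F'/L_1}(b)$ for some $b\in\mg{F'}$. Since $\vf_{F'}\in\I^nF'$ is torsion and $\I^{n+1}F'(\sqrt{-1})=0$, the argument of \Cref{P:virt-In-torsion-split} provides a finite $2$-extension $M/F'$ with $\vf_M$ hyperbolic and $b=\N_{M/F'}(c)$ for some $c\in M$. Then by norm transitivity, $a=\N_{M/K}(c)\in\Hyp_2(\vf)\subseteq\H(\vf)$, as $M/K$ is a finite $2$-extension splitting $\vf$.

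To construct $F'$, I would use the binary-subform construction of \Cref{L:beta}: any binary subform $\beta\subseteq(\vf_{L_1})_\an$ with $\lla a_1\rra\otimes\beta$ isotropic yields $d=\disc\beta$ satisfying $\lla a_1,d\rra=0$ in $\W L_1$, which is property (i). Property (ii) reduces to requiring $\beta$ definite at every real ordering $P$ of $L_1$ at which $\vf_{L_1}$ has non-trivial signature, so that $\disc\beta<_P 0$. The hard part is exhibiting such a $\beta$: locally at each such $P$, the positivity of $a_1$---propagated from $a>0$ at the restricted ordering of $K$, which is forced by $a\in\G(\vf)$---allows definite binary subforms of $(\vf_{L_1})_\an$ to satisfy the isotropy requirement over the real closure; the global construction of $\beta$ over $L_1$ with the prescribed local behaviour while preserving isotropy of $\lla a_1\rra\otimes\beta$ is the main technical difficulty, and is expected to use the hypothesis $\I^{n+1}L_1(\sqrt{-1})=0$ to glue local choices.
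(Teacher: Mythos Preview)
Your overall strategy is right --- pass to an extension where $\varphi$ lies in $\I^n$, then to a further extension where $\varphi$ becomes torsion while tracking $a$ through norms, and finally invoke \Cref{P:virt-In-torsion-split}. The gap is exactly where you locate it: the construction of $F'$. The binary-subform approach via \Cref{L:beta} requires $\lla a_1\rra\otimes(\varphi_{L_1})_\an$ to be isotropic, but $a_1$ is merely some norm preimage of $a$ and need not lie in $\G(\varphi_{L_1})$ nor interact with $\varphi_{L_1}$ at all. Your proposed local--global gluing of definite binary subforms is left as a hope rather than an argument, and your signature remark is inverted: $a\in\G(\varphi)$ forces $\sign_P(\varphi)=0$ precisely at orderings $P$ with $a<_P0$ (where $\lla a\rra$ is definite), not where $a>_P0$.

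The paper resolves this with a single clean observation you are missing: take $d=-a$. Over $K'=K(\sqrt{-a})$ one has $a=\N_{K'/K}(\sqrt{-a})\in\N^\ast_{K'/K}$ for free, and every ordering of $K'$ restricts to an ordering $P$ of $K$ with $-a>_P0$, hence $a<_P0$, hence $\sign_P(\varphi)=0$; so $\varphi_{K'}$ is torsion by Pfister's Local--Global Principle. No subform hunting, no gluing, and the hypothesis $\I^{n+1}K(\sqrt{-1})=0$ is not used here at all --- it enters only later via \Cref{P:virt-In-torsion-split}.

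The second point you should absorb is that the paper does \emph{not} proceed sequentially through $L_1$ and then search for $F'/L_1$. Instead it produces two quadratic extensions of $K$ in parallel --- $K'=K(\sqrt{-a})$ (making $\varphi$ torsion) and $K''$ (making $\pi$ hyperbolic, via \Cref{P:pfiratcon}) --- and then applies the biquadratic norm trick (\Cref{biquatrick}) to get $a\in\sq{K}\cdot\N^\ast_{L/K}$ with $L=K''(\sqrt{-a})$. Over $L$ one has both $\varphi_L$ torsion and $\varphi_L\in\I^nL$, so \Cref{P:virt-In-torsion-split} finishes. Your sequential route forces you to work with an uncontrolled $a_1\in L_1$, which is exactly what creates the difficulty.
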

\begin{proof}
If $\dim(\vf)$ is odd, then $\G(\vf)=\sq{K}=\H(\vf)$.
Assume now that $\dim(\vf)$ is even and that, for some $r\in\nat$, there is a scaled $r$-fold Pfister form $\pi$ over $K$ such that
$\vf\equiv \pi \bmod\I^nK$.
We can make this choice in such way that either $r<n$ or $\pi$ is hyperbolic.
Then $\G(\vf)\subseteq\G(\pi)$, by \Cref{L:AP-trick}.

Since $\H(\vf)\subseteq\G(\vf)$ holds unconditionally, it suffices to show the opposite inclusion.
Consider $a\in\G(\vf)$.
Hence $\lla a\rra\otimes \vf$ is hyperbolic, and it follows in particular that $\sign_P(\vf)=0$ for every ordering $P$ of $K$ containing $-a$.
Let $K'=K(\sqrt{-a})$. 
Note that $a\in\N^\ast_{K'/K}$. 
Since $-a\in\sq{K'}$, it follows that $\sign_{P'}\vf_{K'}=0$ for every ordering $P'$ of $K'$.
We conclude by Pfister's Local-Global Principle \cite[Chap.~VIII, Theorem 3.2]{Lam05} that $\vf_{K'}$ is a torsion form over $K'$.
Since $\pi$ is a scaled Pfister form and $a\in\G(\vf)\subseteq\G(\pi)$, we obtain by \Cref{P:pfiratcon} that there exists an extension $K''/K$ with $[K'':K]\leq 2$ such that $\pi_{K''}$ is hyperbolic and $a\in\N_{K''/K}^\ast$.
Let $L=K''(\sqrt{-a})$. 
By \Cref{biquatrick}, since $a\in\N^\ast_{K'/K}\cap \N^\ast_{K''/K}$, there exists $a'\in\mg{L}$ such that $a\in\sq{K}\N_{L/K}(a')$.
Furthermore, $\vf_{L}$ is a torsion form and $\pi_{L}$ is hyperbolic, whereby $\vf_{L}\in\I^{n}L$.
Since $\I^{n+1}K(\sqrt{-1})=0$, it follows by \cite[Cor.~35.8]{EKM08} (applied twice) that $\I^{n+1}L(\sqrt{-1})=0$.
Since $\vf_{L}\in\I^{n}L$, \Cref{P:virt-In-torsion-split} yields that $a'\in\N_{L'/L}^\ast$ for a finite $2$-extension $L'/L$ such that $\vf_{L'}$ is hyperbolic.
We conclude that $a\in \sq{K}\N_{L'/K}^\ast\subseteq\H(\vf)$.
\end{proof}

\begin{rem}
Note that every quadratic form over $K$ is congruent to a scaled Pfister form modulo $\I^2K$.
So for $n=2$, \Cref{T:In-G-H} says that, if $\I^3K(\sqrt{-1})=0$, then $\G(\vf)=\H(\vf)$ for every quadratic form $\vf$ over $K$.
\end{rem}

\section{Multiquadratic field extensions} 
\label{S:multiquad}

Let $L/K$ be a finite multiquadratic field extension of $K$, that is,
a finite Galois extension with an elementary $2$-abelian Galois group.
Let $\widetilde{\N}^\ast_{L/K}$ denote the intersection of the subgroups $\N^\ast_{K'/K}$ where $K'/K$ ranges over quadratic subextensions of $L/K$.
In the context of the study of $2$-torsion elements in the Brauer group, Tignol and Gille considered the quotient group
$$\Lambda(L/K)=\widetilde{\N}^\ast_{L/K}/\sq{K}\N_{L/K}^\ast\,.$$

For a set $S$ we denote by $\mc{P}(S)$ the power set of $S$.
Given $r\in\nat$, an $r$-tuple $(a_1,\dots,a_r)\in K^r$ and a subset $I\subseteq \{1,\dots,r\}$, we set $$a_I=\prod_{i\in I} a_i\,.$$ 

\begin{prop}\label{Tignol-group-Gille-translation}
Let $r\in\nat$, $a_1,\dots,a_r\in\mg{K}$ and $L=K(\sqrt{a_1},\dots,\sqrt{a_r})$.
Let $\varphi=(\lla a_1\rra,\dots,\lla a_r\rra)$.
Then $\widetilde{\N}^\ast_{L/K}=\G(\varphi)$,
$\N^\ast_{L/K}=\Hyp(\varphi)=\Hyp_2(\varphi)$,
$\sq{K}\N^\ast_{L/K}=\H(\varphi)$ 
and $$\Lambda(L/K)=\G(\varphi)/\H(\varphi)\,.$$
\end{prop}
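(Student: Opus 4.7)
The plan is to dispatch the four equalities in an order that propagates the preceding data cleanly: first $\N^\ast_{L/K}=\Hyp(\varphi)=\Hyp_2(\varphi)$ (from which $\H(\varphi)=\sq{K}\N^\ast_{L/K}$ is automatic via the definition $\H(\varphi)=\sq{K}\Hyp(\varphi)$), then the identification $\widetilde{\N}^\ast_{L/K}=\G(\varphi)$, and finally the observation that $\Lambda(L/K)=\G(\varphi)/\H(\varphi)$ is a formal consequence of the previous two.

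For the $\Hyp$-identities, I would use that $(\lla a_i\rra)_F$ is hyperbolic precisely when $a_i\in\sq{F}$, so all $r$ forms $(\lla a_i\rra)_F$ become hyperbolic simultaneously exactly when $L\subseteq F$. Transitivity of norms then gives $\N^\ast_{F/K}\subseteq\N^\ast_{L/K}$ for every such $F$, whence $\Hyp(\varphi)\subseteq\N^\ast_{L/K}$; the reverse inclusion $\N^\ast_{L/K}\subseteq\Hyp_2(\varphi)$ holds because $L/K$ is itself a finite multiquadratic (and hence $2$-)~extension over which all $r$ forms are hyperbolic, while $\Hyp_2(\varphi)\subseteq\Hyp(\varphi)$ is tautological.

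The core step is $\widetilde{\N}^\ast_{L/K}=\G(\varphi)$. From the standard identity $\G(\lla a_i\rra)=\N^\ast_{K(\sqrt{a_i})/K}$ one obtains $\G(\varphi)=\bigcap_{i=1}^r\N^\ast_{K(\sqrt{a_i})/K}$. The inclusion $\widetilde{\N}^\ast_{L/K}\subseteq\G(\varphi)$ is then immediate, since each nontrivial $K(\sqrt{a_i})$ is a quadratic subextension of $L/K$. For the opposite inclusion I would first observe, by Kummer theory, that every quadratic subextension of $L/K$ is of the form $K(\sqrt{a_I})$ for some nonempty $I\subseteq\{1,\dots,r\}$ with $a_I\notin\sq{K}$, and then prove by induction on $|I|$ that $a\in\bigcap_{i\in I}\N^\ast_{K(\sqrt{a_i})/K}$ implies $a\in\N^\ast_{K(\sqrt{a_I})/K}$. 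The inductive step writes $I=I'\sqcup\{j\}$, uses the inductive hypothesis to place $a$ in $\N^\ast_{K(\sqrt{a_{I'}})/K}$, and then invokes \Cref{biquatrick} to push $a$ into $\sq{K}\cdot\N^\ast_{K(\sqrt{a_{I'}},\sqrt{a_j})/K}\subseteq\N^\ast_{K(\sqrt{a_I})/K}$, where the last inclusion holds by transitivity of norms, since $K(\sqrt{a_I})$ sits inside the biquadratic compositum. The main obstacle will be the degenerate cases of the induction in which $K(\sqrt{a_{I'}})$ and $K(\sqrt{a_j})$ fail to be linearly disjoint over $K$, so that \Cref{biquatrick} does not apply verbatim; in each such case one of $a_{I'}$, $a_j$, or $a_I$ is a square in $K$, and the desired conclusion either follows from the inductive hypothesis or becomes trivial.
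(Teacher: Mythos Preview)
Your proposal is correct, and for the $\Hyp$-identities, the $\H$-identity, and the final quotient it matches the paper's argument essentially verbatim.

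The one place where you take a genuinely different route is the inclusion $\G(\varphi)\subseteq\widetilde{\N}^\ast_{L/K}$, i.e.~the claim that $\bigcap_{i\in I}\N^\ast_{K(\sqrt{a_i})/K}\subseteq\N^\ast_{K(\sqrt{a_I})/K}$ for every $I$. You prove this by induction on $|I|$, invoking \Cref{biquatrick} at each step and then carefully disposing of the degenerate (non--linearly-disjoint) cases. The paper instead dispatches this in one line via the symmetry of the relation ``$c\in\G(\lla a\rra)$'': if $c\in\G(\lla a_i\rra)$ for each $i\in I$, then each $a_i$ lies in $\N^\ast_{K(\sqrt{c})/K}$, hence so does the product $a_I$, whence $c\in\G(\lla a_I\rra)=\N^\ast_{K(\sqrt{a_I})/K}$. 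Your inductive argument is perfectly valid, but it imports \Cref{biquatrick} and a case analysis that the bilinearity trick avoids entirely; on the other hand, your approach has the minor virtue of staying purely on the ``norm side'' without passing through similarity factors.
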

\begin{proof}
    Let $\mc{P}=\mc{P}(\{1,\dots,r\})$.
     The quadratic subextensions of $L/K$ are given by $K(\sqrt{a_{I}})/K$ with $I\in\mc{P}$.
    For $I\in\mc{P}$, we have that $\N^\ast_{K(\sqrt{a_{I}})/K}=\G(\lla a_{I}\rra)$ and $\bigcap_{i\in I}\G(\lla a_i\rra)\subseteq \G(\lla a_{I}\rra)=\N^\ast_{K(\sqrt{a_{I}})/K}$.
    Hence $\widetilde{N}^\ast_{L/K}=\bigcap_{i=1}^r\G(\lla a_i\rra)=\G(\varphi)$.
    Note that $\lla a_{I}\rra_L$ is hyperbolic for any $I\in\mc{P}$.
    On the other hand, for any field extension $N/K$, we have 
    that $\lla a_1\rra_N,\dots,\lla a_r\rra_N$ are hyperbolic if and only if $a_1,\dots,a_r\in\sq{N}$, if and only if $L\subseteq N$.
    Hence $\Hyp(\varphi)=\Hyp_2(\varphi)=\N^\ast_{L/K}$.
    Now the other equalities now follow from the definitions of $\H(\varphi)$ and $\Lambda(L/K)$.
\end{proof}

A quadratic  extension is a multiquadratic extension of degree $2$. 
Note that a biquadratic extension is a multiquadratic extension of degree $4$.
A multiquadratic extension of degree $8$ is called a \emph{triquadratic extension}.
Trivially we have $\Lambda(L/K)=0$ for any quadratic field extension $L/K$, and it follows from \Cref{biquatrick} that the same is true for every biquadratic field extension $L/K$.
The smallest degree where $\Lambda(L/K)\neq 0$ can occur for a multiquadratic field extension $L/K$ is $8$, that is, for a triquadratic extension.
The first examples were constructed by J.-P.~Tignol \cite{Tig81}.
A fairly general construction of multiquadratic extensions $L/K$ with $[L:K]=2^r$ for any $r\geq 3$ with $\Lambda(L/K)\neq 0$ can be derived from a result by A.~Sivatski \cite{Siv10} concerning the case $r=3$.

\begin{prop}[Sivatski]\label{Tignol-Sivatski-multiquad-nontriv}
    Let $K=k(t)$ for a field $k$ with $\car(k)\neq 2$.
    Let $a_1,a_2\in\mg{k}$ be such that $[k(\sqrt{a_1},\sqrt{a_2}):k]=4$ and $\N_{k(\sqrt{a_1})/k}^\ast\cap \N_{k(\sqrt{a_2})/k}^\ast\not\subseteq\sq{k}$. Then for any integer $r\geq 3$, there exist a multiquadratic field extension $L/K$ with $a_1,a_2\in\sq{L}$, $[L:K]=2^r$ and $\Lambda(L/K)\neq 0$.
\end{prop}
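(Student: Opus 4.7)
The plan is to treat the case $r=3$ as Sivatski's original construction from \cite{Siv10} and then to pass to $r\geq 4$ by inductively adjoining additional square roots while preserving a single witness to $\Lambda(L/K)\neq 0$.

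For $r=3$, by hypothesis there exists $c\in\N^\ast_{k(\sqrt{a_1})/k}\cap \N^\ast_{k(\sqrt{a_2})/k}\setminus\sq{k}$. Following \cite{Siv10}, one chooses $a_3\in\mg{K}$, essentially of the form $c\cdot f(t)$ for a suitable polynomial $f(t)\in k[t]$, so that $L_3=K(\sqrt{a_1},\sqrt{a_2},\sqrt{a_3})$ is a triquadratic extension and admits a witness $w\in\mg{K}$ with $w\in\widetilde{\N}^\ast_{L_3/K}$ but $w\notin\sq{K}\N^\ast_{L_3/K}$. The seven norm relations $w\in\N^\ast_{K(\sqrt{\alpha})/K}$ over the quadratic subextensions of $L_3/K$ follow from explicit polynomial identities exploiting the given norm representations of $c$ over $k(\sqrt{a_i})$ for $i=1,2$. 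The non-triviality of the class of $w$ in $\Lambda(L_3/K)$ is detected by a residue computation at the $t$-adic valuation on $K=k(t)$: a hypothetical relation $w\in\sq{K}\N^\ast_{L_3/K}$ would specialize to a biquadratic norm relation over $k$ forcing $c\in\sq{k}\cdot \N^\ast_{k(\sqrt{a_1},\sqrt{a_2})/k}$, which by \Cref{biquatrick} contradicts $c\notin\sq{k}$.

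For $r>3$ the plan is to argue by induction on $r$. Assume $L_r$ has been constructed with $[L_r:K]=2^r$, $\sqrt{a_1},\sqrt{a_2}\in L_r$, and with the class of some $w\in\mg{K}$ nonzero in $\Lambda(L_r/K)$. Adjoin $\sqrt{b_{r+1}}$ for a carefully chosen $b_{r+1}\in\mg{K}$, say an irreducible polynomial in $k[t]$ whose square class is new modulo $\sq{L_r}$, so that $L_{r+1}=L_r(\sqrt{b_{r+1}})$ has degree $2^{r+1}$ over $K$. One then verifies (i) that $w\in\widetilde{\N}^\ast_{L_{r+1}/K}$, by checking $w\in\N^\ast_{K(\sqrt{\alpha b_{r+1}})/K}$ for every new quadratic subextension $K(\sqrt{\alpha b_{r+1}})\subseteq L_{r+1}$ (with $\alpha$ a product of some of the previous $a_i, b_j$), and (ii) that $w\notin \sq{K}\N^\ast_{L_{r+1}/K}$.

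The hardest point is condition (ii): enlarging $L_r$ to $L_{r+1}$ can only enlarge the norm group, so one must ensure that the new norms do not absorb $w$. The approach, in the spirit of Sivatski \cite{Siv18}, is to select a discrete valuation on $K$ (the $t$-adic one, or a place associated to one of the $b_j$) relative to which the $b_j$ behave as units and the obstruction in $\Lambda(L_3/K)$ survives at the residue-field level; a ramification-theoretic argument then transfers the non-triviality from $\Lambda(L_3/K)$ back to $\Lambda(L_{r+1}/K)$, closing the induction.
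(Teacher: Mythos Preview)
Your inductive strategy for $r>3$ contains a basic reversal. You assert that ``enlarging $L_r$ to $L_{r+1}$ can only enlarge the norm group,'' but the opposite holds: since $L_r\subseteq L_{r+1}$, the norm factorises as $\N_{L_{r+1}/K}=\N_{L_r/K}\circ\N_{L_{r+1}/L_r}$, whence $\N^\ast_{L_{r+1}/K}\subseteq \N^\ast_{L_r/K}$. Thus condition~(ii) is automatic once it holds for $L_r$, and no ramification-theoretic transfer argument is needed. The genuine issue is condition~(i): the group $\widetilde{\N}^\ast_{L_{r+1}/K}$ is an intersection over \emph{more} quadratic subextensions than $\widetilde{\N}^\ast_{L_r/K}$, so it can only shrink, and one must ensure that $w$ remains in it. Your proposed choice of $b_{r+1}$ as ``an irreducible polynomial in $k[t]$ whose square class is new modulo $\sq{L_r}$'' gives no reason whatsoever for $w$ to be a norm from $K(\sqrt{b_{r+1}})$, so step~(i) is left unaddressed.

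The paper avoids induction entirely and constructs all of $a_3,\dots,a_r$ at once. Having fixed $d\in(\N^\ast_{k(\sqrt{a_1})/k}\cap\N^\ast_{k(\sqrt{a_2})/k})\setminus\sq{k}$, one takes pairwise distinct $c_3^2,\dots,c_r^2\in\mg{k}$ with $c_3=1$ and sets $a_i=t^2-4dc_i^2$ for $3\leq i\leq r$. The identity $d=\bigl(\tfrac{t}{2c_i}\bigr)^{2}-a_i\bigl(\tfrac{1}{2c_i}\bigr)^{2}$ exhibits $d$ as a norm from each $K(\sqrt{a_i})$, so $d\in\widetilde{\N}^\ast_{L/K}$ by \Cref{Tignol-group-Gille-translation}. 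The non-triviality $d\notin\sq{K}\N^\ast_{L/K}$ then follows immediately from the triquadratic case $d\notin\sq{K}\N^\ast_{K(\sqrt{a_1},\sqrt{a_2},\sqrt{a_3})/K}$, which is \cite[Cor.~10]{Siv10}, together with the inclusion $\N^\ast_{L/K}\subseteq\N^\ast_{K(\sqrt{a_1},\sqrt{a_2},\sqrt{a_3})/K}$ just noted. No residue computation beyond what is already packaged in Sivatski's $r=3$ result is required.
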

\begin{proof}
    We fix $d\in(\N_{k(\sqrt{a_1})/k}^\ast\cap \N_{k(\sqrt{a_2})/k}^\ast)\setminus\sq{k}$ and $a_3=t^2-4d$. 
    Since $[k(\sqrt{a_1},\sqrt{a_2}):k]=4$, we have that $k$ is infinite, so we may fix $c_3,\dots,c_r\in\mg{k}$ with $c_3=1$ such that $c_3^2,\dots,c_r^2$ are distinct.
    We set $a_i=t^2-4dc_i^2$ for $3\leq i\leq r$ and $L=K(\sqrt{a_1},\dots,\sqrt{a_r})$.
    Then $d\in\bigcap_{i=1}^r\N^\ast_{K(\sqrt{a_i})/K}=\widetilde{\N}^\ast_{L/K}$, and 
    since $a_3,\dots,a_r$ are distinct monic irreducible quadratic polynomials in $k[t]$, we 
    obtain that $[L:K]=2^r$.
    By \cite[Cor.~10]{Siv10}, we have that 
    $d\notin \sq{K}\N^\ast_{K(\sqrt{a_1},\sqrt{a_2},\sqrt{a_3})/K}$.
    Hence $d\notin \sq{K}\N^\ast_{L/K}$, and
    we conclude that $\Lambda(L/K)\neq 0$.
\end{proof}

A.~Sivatski also showed that, for every triquadratic extension, the group $\Lambda$ is nontrivial at least in a weak sense, namely after combining the extension with a linearly disjoint extension of the base field.

\begin{thm}[Sivatski]\label{Sivatski-triquatshift}
Let $a_1,a_2,a_3\in\mg{K}$ with $[K(\sqrt{a_1},\sqrt{a_2},\sqrt{a_3}):K]=8$. 
Then there exists a field extension $K'/K$ such that 
$[K'(\sqrt{a_1},\sqrt{a_2},\sqrt{a_3}):K']=8$ and $\Lambda(K'(\sqrt{a_1},\sqrt{a_2},\sqrt{a_3})/K')\neq 0$.
\end{thm}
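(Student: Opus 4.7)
The plan is to construct $K'/K$ as the function field of a $K$-variety parametrizing common norms for the triquadratic extension, engineered so that a tautological element $d \in K'$ automatically lies in $\widetilde{\N}^\ast$ while, generically, avoiding $\sq{K'}\N^\ast$. Let $\mc{P}^\ast = \mc{P}(\{1,2,3\})\setminus\{\emptyset\}$ and let $V$ be the affine $K$-variety with coordinates $(d, \{x_I, y_I\}_{I \in \mc{P}^\ast})$ cut out by the equations $x_I^2 - a_I y_I^2 = d$ for each $I \in \mc{P}^\ast$. Set $K' = K(V)$ and $L' = K'(\sqrt{a_1}, \sqrt{a_2}, \sqrt{a_3})$.

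First I would verify the geometric setup. The variety $V$ is a complete intersection of norm-type quadrics sharing the common parameter $d$, and one expects to verify that it is geometrically integral over $K$ (for instance, by eliminating $d$ using the equation for $I=\{1\}$ and checking that the resulting six norm-hypersurfaces meet smoothly and transversely at generic points where all $y_I$ are nonzero). Hence $K'/K$ is a well-defined field extension with $K$ algebraically closed in $K'$. Since $a_I \notin \sq{K}$ for every $I \in \mc{P}^\ast$ by the hypothesis $[L:K]=8$, we then also have $a_I \notin \sq{K'}$, so $[L':K']=8$. By construction, the element $d$ lies in $\N^\ast_{K'(\sqrt{a_I})/K'}$ for each $I \in \mc{P}^\ast$, and hence $d \in \widetilde{\N}^\ast_{L'/K'}$.

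The heart of the argument is to show that $d \notin \sq{K'}\N^\ast_{L'/K'}$, which is precisely the condition guaranteeing $\Lambda(L'/K') \neq 0$. I would argue by contradiction: if $d = c^2\N_{L'/K'}(z)$ for some $c \in \mg{K'}$ and $z \in \mg{L'}$, then this rational identity on $V$ could be analysed via a discrete valuation on $K'$ attached to a carefully selected codimension-one subvariety of $V$. Taking successive residues, one expects to produce a cohomological obstruction (for instance, an unramified $H^3$-class, or a Merkurjev--Suslin-type symbol) incompatible with such a decomposition. An alternative formulation is to engineer a specialization of the coordinates of $V$ along which $d$ maps to a known non-trivial class in some $\Lambda(L_0/K_0)$, as furnished by \Cref{Tignol-Sivatski-multiquad-nontriv} or by Tignol's original constructions \cite{Tig81}, so that the supposed equality specialized becomes a witness of triviality in a context where non-triviality is already established.

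Realising such a residue computation or specialization faithfully --- in particular, choosing the valuation or specialization so that the triquadratic structure is preserved and so that the specialized $d$ actually realizes a non-trivial class in the specialized $\Lambda$ --- is the main technical obstacle, and is where the bulk of the work should concentrate.
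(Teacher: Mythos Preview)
The paper does not prove this theorem at all: its entire proof is the single line ``See \cite[Theorem 5.5]{Siv18}.'' So there is nothing to compare your argument against within the paper itself, and any genuine proof you supply goes beyond what the paper does.

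That said, your proposal is only a strategy, and the key step is not carried out. You correctly identify that showing $d \notin \sq{K'}\N^\ast_{L'/K'}$ is ``the main technical obstacle'', but neither of your two suggested routes is made to work. The specialization route in particular looks circular: to specialize your generic $d$ to a known non-trivial class in some $\Lambda(L_0/K_0)$, you would need a $K$-rational (or $K$-algebra) point of $V$ lying over a field $K_0 \supseteq K$ where $\Lambda(K_0(\sqrt{a_1},\sqrt{a_2},\sqrt{a_3})/K_0)$ is already known to be non-trivial --- but producing such a $K_0$ for the \emph{given} $a_1,a_2,a_3$ is exactly the content of the theorem. The examples in \Cref{Tignol-Sivatski-multiquad-nontriv} and in \cite{Tig81} are for \emph{specific} slot choices, not for arbitrary $a_1,a_2,a_3 \in \mg{K}$, so you cannot simply import them.

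The unramified-cohomology/residue route is more promising and is closer in spirit to what Sivatski actually does in \cite{Siv18}, where the obstruction is phrased via the homology of the Brauer complex associated to the triquadratic extension. But your sketch does not name which divisor of $V$ to take residues along, nor which class in $H^3$ (or which symbol) should appear, nor why it is non-zero. Until that is made explicit, the proposal remains a plausible outline rather than a proof. If you want to pursue this direction, you should consult \cite{Siv18} directly: the argument there is not long, but it uses the specific structure of the Brauer complex rather than a bare specialization.
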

\begin{proof}
    See \cite[Theorem 5.5]{Siv18}.
\end{proof}

\section{Hyperbolic norm groups over henselian valued fields} 
\label{S:henselian-stuff}

Let $v$ be a valuation on $K$.
We denote by $\mc{O}_v$ the corresponding valuation ring, by $\mfm_v$ the maximal ideal of $\mc{O}_v$, by $Kv$ the residue field $\mc{O}_v/\mfm_v$ and by
$vK$ the value group $v(\mg{K})$, which is an ordered abelian group with additive notation.
We say that $v$ is \emph{henselian} if 
every polynomial $f\in\mc{O}_v[X]$ having a simple root in $Kv$ has a root in $K$, or equivalently, if $v$ extends uniquely to a valuation on any algebraic field extension of $K$.
\medskip

In the sequel, let $v$ be a henselian valuation on $K$ with $\car(Kv)\neq 2$.

\begin{lem}\label{rigid-norms}
    For $t\in\mg{K}$ with $v(t)\notin 2vK$, we have $\N_{K(\sqrt{t})/K}^\ast=\sq{K}\cup t\sq{K}$.
\end{lem}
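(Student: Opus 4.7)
The plan is to prove the two inclusions separately, with the nontrivial direction coming from a valuation-theoretic case analysis combined with Hensel's lemma.

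First I would dispose of the easy inclusion. Every element of $\sq{K}$ is the norm of its square root taken in $K$, and the element $-t$ equals $\N_{K(\sqrt{t})/K}(\sqrt{t})$, so up to the square class of $-1$ we have $t\sq{K}\subseteq\N^{\ast}_{K(\sqrt{t})/K}$. In particular the coset of $t$ modulo $\sq{K}$ is realised by a norm, giving one inclusion.

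For the reverse inclusion I would extend $v$ uniquely to a valuation $w$ on $L=K(\sqrt{t})$, which is possible because $v$ is henselian. The hypothesis $v(t)\notin 2vK$ forces $w(\sqrt{t})=\tfrac{1}{2}v(t)\notin vK$, so $L/K$ is totally ramified of degree $2$ and $wL=vK\sqcup\bigl(w(\sqrt{t})+vK\bigr)$. Now take any $z=x+y\sqrt{t}\in L^{\times}$ and consider $\N_{L/K}(z)=x^{2}-ty^{2}$. The decisive observation is that when both $x$ and $y$ are nonzero, the valuations $v(x^{2})=2v(x)$ and $v(ty^{2})=v(t)+2v(y)$ lie in the two distinct cosets $2vK$ and $v(t)+2vK$ of $2vK$ in $vK$ and therefore cannot coincide; hence one of the two summands strictly dominates the other.

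The main step is then a single Hensel argument, applied in the two cases symmetrically. If $v(x^{2})<v(ty^{2})$, I factor $\N(z)=x^{2}\bigl(1-t(y/x)^{2}\bigr)$; the second factor is a unit congruent to $1$ modulo $\mfm_{v}$, and since $\car(Kv)\neq 2$, Hensel's lemma applied to $X^{2}-\bigl(1-t(y/x)^{2}\bigr)$ shows it lies in $\sq{\mc{O}_{v}^{\times}}$, so $\N(z)\in\sq{K}$. In the opposite case $v(x^{2})>v(ty^{2})$, the analogous factorisation $\N(z)=-ty^{2}\bigl(1-x^{2}/(ty^{2})\bigr)$ places $\N(z)$ in the nontrivial coset represented by $t$. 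The boundary cases $x=0$ and $y=0$ are handled directly.

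I do not expect a genuine obstacle: the argument is entirely standard once the henselian extension of $v$ is in place, and every step reduces to either comparing valuations or invoking Hensel's lemma in the tame residue characteristic. The only real bookkeeping is to see that the two case outcomes assemble into the single nontrivial coset claimed in the statement, which is forced by the fact that $\N^{\ast}_{K(\sqrt{t})/K}/\sq{K}$ has order at most $2$ and already contains the class determined by $\sqrt{t}$.
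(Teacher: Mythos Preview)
Your approach is essentially identical to the paper's: compare $v(x^{2})$ with $v(ty^{2})$, observe they cannot coincide because $v(t)\notin 2vK$, factor out the dominant term, and use $1+\mfm_{v}\subseteq\sq{K}$ (which is exactly your Hensel step). The detour through extending $v$ to $w$ on $L$ and computing the ramification is correct but unnecessary; the paper works entirely inside $K$.

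One point to tighten. Your own computation in the case $v(x^{2})>v(ty^{2})$ lands $\N(z)$ in $-t\sq{K}$, and you correctly note $\N(\sqrt{t})=-t$; so what your argument actually establishes is $\N^{\ast}_{K(\sqrt{t})/K}=\sq{K}\cup(-t)\sq{K}$. The phrase ``up to the square class of $-1$'' does not bridge this to $t\sq{K}$, since $-1$ need not lie in $\sq{K}$ (take $K=\mathbb{R}(\!(s)\!)$, $t=s$: then $-s$ is a norm but $-s\notin\sq{K}\cup s\sq{K}$). In fact the lemma as printed carries a sign slip --- the paper's own proof concludes with $\sq{K}\cup -t\sq{K}$, and the subsequent applications in \Cref{L:hens-rigid-simfactors} use that corrected form --- so the discrepancy lies in the statement, not in your reasoning.
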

\begin{proof}
    Since $v$ is henselian and $\car(Kv)\neq 2$, we have $1+\mfm_v\subseteq\sq{K}$.
    Let $t\in\mg{K}$ be such that $v(t)\notin 2vK$.
    For $x,y\in K$ not both zero, we obtain that $v(x^2)=2v(x)\neq v(t)+2v(y)=v(y^2t)$, whereby $v(x^2)<v(ty^2)$ or $v(x^2)>v(ty^2)$ and thus $x^2-ty^2\subseteq \sq{K}(1+\mfm_v)\cup -t\sq{K}(1+\mfm_v)=\sq{K}\cup t\sq{K}$.
    Therefore $\N_{K(\sqrt{t})/K}^\ast=\sq{K}\cup -t\sq{K}$.
\end{proof}

\begin{lem}\label{L:hens-rigid-simfactors}
    Let $\varphi$ be a quadratic form over $K$ and $t\in\G(\varphi)$ be such that 
    $v(t)\notin 2vK$. Then $\varphi_{K(\sqrt{-t})}$ is hyperbolic.
\end{lem}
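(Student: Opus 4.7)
The plan is to reduce to the case that $\varphi$ is anisotropic and then invoke Springer's residue decomposition for quadratic forms over the henselian valued field $(K,v)$. First I reduce to $\varphi$ anisotropic: since $t\hh\simeq\hh$, Witt cancellation applied to $t\varphi\simeq\varphi$ gives $t\varphi_{\an}\simeq\varphi_{\an}$, and $\varphi_{K(\sqrt{-t})}$ is hyperbolic if and only if $(\varphi_{\an})_{K(\sqrt{-t})}$ is. Note also that $v(-t)=v(t)\notin 2vK$ forces $-t\notin\sq{K}$, so $L=K(\sqrt{-t})$ is a proper quadratic extension of $K$. I then choose a set $S\subseteq vK$ of representatives of $vK/2vK$ containing $0$ and $v(t)$, together with distinguished uniformizers $\pi_s\in\mg{K}$ satisfying $v(\pi_s)=s$, $\pi_0=1$ and $\pi_{v(t)}=t$. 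By Springer's decomposition theorem, the anisotropic form $\varphi$ admits a unique (up to isometry) decomposition
$$\varphi\simeq\bigperp_{s\in S}\pi_s\varphi_s,$$
where each $\varphi_s$ has coefficients in $\mg{\mc{O}_v}$ and only finitely many $\varphi_s$ are nontrivial.

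Since $v(t)\notin 2vK$, translation by $v(t)+2vK$ induces a fixed-point-free involution $\sigma$ on $S$ with $\sigma(0)=v(t)$, partitioning $S$ into pairs $\{r,\sigma(r)\}$. For each $s\in S$, I write $t\pi_s=u_s^2c_s\pi_{\sigma(s)}$ with $u_s\in\mg{K}$ and $c_s\in\mg{\mc{O}_v}$. Regrouping then yields a Springer decomposition
$$t\varphi\simeq\bigperp_{r\in S}\pi_r(c_{\sigma(r)}\varphi_{\sigma(r)})\,,$$
so the hypothesis $t\varphi\simeq\varphi$ combined with the uniqueness assertion in Springer's decomposition theorem gives $\varphi_r\simeq c_{\sigma(r)}\varphi_{\sigma(r)}$ for every $r\in S$.

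Combining this identification with the congruence $\pi_rc_{\sigma(r)}\equiv t\pi_{\sigma(r)}\pmod{\sq{K}}$ (which follows from $t\pi_{\sigma(r)}=u_{\sigma(r)}^2c_{\sigma(r)}\pi_r$), each pair $\{r,\sigma(r)\}$ contributes
$$\pi_r\varphi_r\perp\pi_{\sigma(r)}\varphi_{\sigma(r)}\simeq\pi_{\sigma(r)}\la 1,t\ra\otimes\varphi_{\sigma(r)},$$
which is a multiple of the binary Pfister form $\la 1,t\ra$. Summing over the pairs gives $\varphi\in\la 1,t\ra\cdot\W K$, which is exactly the kernel of the restriction $\W K\to\W L$; hence $\varphi_L$ is hyperbolic. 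The main obstacle is invoking Springer's decomposition together with its uniqueness assertion and carefully tracking the unit scalars $c_s$ through the pairing; once this is in place, the identification of each pair as a multiple of $\la 1,t\ra$ is essentially formal.
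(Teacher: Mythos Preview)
Your proof is correct, but it follows a genuinely different route from the paper's argument.

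The paper first treats the binary case directly: writing $\varphi\simeq\la a,-ad\ra$ with $d\notin\sq{K}$, the hypothesis $t\in\G(\varphi)$ forces $d\in\N^\ast_{K(\sqrt{t})/K}\setminus\sq{K}$, and the preceding lemma on rigid norms (\Cref{rigid-norms}) gives $\N^\ast_{K(\sqrt{t})/K}=\sq{K}\cup -t\sq{K}$, so $d\in -t\sq{K}$ and $\varphi_{K(\sqrt{-t})}$ is hyperbolic. For the general case it invokes \cite[Cor.~2.3]{EL73} to split the anisotropic $\varphi$ into binary summands each admitting $t$ as a similarity factor, and applies the binary case to each.

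Your approach bypasses both the Elman--Lam decomposition and \Cref{rigid-norms}: you work directly with the Springer (residue) decomposition indexed by $vK/2vK$, observe that translation by $v(t)$ is a fixed-point-free involution on this index set, and use uniqueness of the decomposition to identify each pair of residue pieces as a scalar multiple of $\la 1,t\ra\otimes\varphi_{\sigma(r)}$. This is more structural and yields the slightly stronger conclusion $\varphi\in\la 1,t\ra\cdot\W K$ in one stroke. The trade-off is that you need the Springer/Durfee decomposition (existence \emph{and} uniqueness) for henselian valuations with arbitrary value group, not just the discrete case; this is valid (the paper itself later cites it as Durfee's Theorem \cite[Theorem~3.11]{BDM25}), but is perhaps less universally familiar than the Elman--Lam result. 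Either way, both arguments ultimately exploit the same rigidity phenomenon coming from $1+\mfm_v\subseteq\sq{K}$.
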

\begin{proof}  
   Let $K'=K(\sqrt{-t})$ and $K''=K(\sqrt{t})$.
   In view of \Cref{rigid-norms}, we have that 
   $\N_{K'/K}^\ast=\sq{K}\cup t\sq{K}$ and $\N_{K''/K}^\ast=\sq{K}\cup-t\sq{K}$.
    Since $\G(\varphi)=\G(\varphi_\an)$, we may replace $\varphi$ by $\varphi_\an$ and assume that $\varphi$ is anisotropic. 
   Suppose first that $\dim (\varphi)=2$.
   Then $\varphi\simeq \la a,-ad\ra$ for some $a,d\in\mg{K}$.
    Since $\la 1,-t\ra\otimes \varphi$ is hyperbolic and $\varphi$ is anisotropic, we obtain that $d\in\N^\ast_{K''/K}\setminus\sq{K}$. 
    Hence $d\sq{K}=t\sq{K}$, whereby $\varphi_{K'}$ is hyperbolic.  
   In the general case, we obtain by \cite[Cor.~2.3]{EL73} that $\varphi\simeq \beta_1\perp\ldots\perp\beta_r$ for $r=\frac{1}2\dim(\varphi)$ and certain binary quadratic forms $\beta_1,\dots,\beta_r$ over $K$ such that $t\in\G(\beta_i)$ for $1\leq i\leq r$.
   By the $2$-dimensional case we obtain that $(\beta_i)_{K'}$ is hyperbolic for $1\leq i\leq r$, whereby $\varphi_{K'}$ is hyperbolic.
\end{proof}

\begin{thm}\label{T:2henselian-odd-simfactor}
    Let $\varphi$ be a quadratic form over $K$ such that $v(\G(\varphi))\not\subseteq 2vK$.
    Then
    $$\G(\varphi)=\H(\varphi)=\Hyp_2(\varphi)\,.$$
\end{thm}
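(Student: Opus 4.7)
The chain of inclusions $\Hyp_2(\varphi)\subseteq \H(\varphi)\subseteq \G(\varphi)$ is available unconditionally (the first two by definition, the last by Scharlau's Norm Principle, as recalled in \Cref{S:Merk}), so the entire task reduces to proving $\G(\varphi)\subseteq \Hyp_2(\varphi)$. My strategy is to convert the rigidity furnished by \Cref{L:hens-rigid-simfactors} into honest norm membership by a well-chosen quadratic extension, and then to bootstrap from similarity factors of odd value to all similarity factors by multiplying with the fixed element $t\in\G(\varphi)$ with $v(t)\notin 2vK$ provided by the hypothesis.

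As a first step I would establish the auxiliary claim that every $b\in\G(\varphi)$ with $v(b)\notin 2vK$ lies in $\Hyp_2(\varphi)$. To see this, put $L=K(\sqrt{-b})$. Because $v(-b)=v(b)\notin 2vK$, the element $-b$ cannot be a square, so $L/K$ is a genuine quadratic (hence degree $2$) extension; \Cref{L:hens-rigid-simfactors} applied to $b\in\G(\varphi)$ yields that $\varphi_L$ is hyperbolic; and the computation $\N_{L/K}(\sqrt{-b})=b$ places $b\in \N_{L/K}^\ast\subseteq \Hyp_2(\varphi)$.

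Next I would handle a general $a\in\G(\varphi)$ by splitting into two cases according to the parity of $v(a)$ modulo $2vK$. If $v(a)\notin 2vK$, the claim applies directly to $b=a$. If instead $v(a)\in 2vK$, then $v(at)=v(a)+v(t)\notin 2vK$; since $\G(\varphi)$ is a subgroup of $\mg{K}$, both $t$ and $at$ belong to $\G(\varphi)$ and each has value outside $2vK$, so the claim applied to each places them in $\Hyp_2(\varphi)$. Because $\Hyp_2(\varphi)$ is itself a subgroup of $\mg{K}$, the factorisation $a=(at)\cdot t^{-1}$ concludes $a\in \Hyp_2(\varphi)$.

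I do not expect a substantial obstacle: the henselian hypothesis together with $\car(Kv)\neq 2$ does the real work through \Cref{rigid-norms} and \Cref{L:hens-rigid-simfactors}, and what remains is a short group-theoretic manipulation in $\mg{K}/2vK$. The only point demanding a moment's care is verifying that the auxiliary extension $K(\sqrt{-b})/K$ is non-trivial, which is ensured precisely by the condition $v(b)\notin 2vK$ singled out in the claim.
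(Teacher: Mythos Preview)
Your proposal is correct and follows essentially the same approach as the paper's proof: reduce to $\G(\varphi)\subseteq\Hyp_2(\varphi)$, handle elements $b$ with $v(b)\notin 2vK$ via the quadratic extension $K(\sqrt{-b})$ using \Cref{L:hens-rigid-simfactors}, and for the remaining case factor $a=(at)\cdot t^{-1}$ with both factors of value outside $2vK$. Your exposition is slightly more explicit (isolating the auxiliary claim, verifying the extension is nontrivial, and computing $\N_{L/K}(\sqrt{-b})=b$), but the argument is the same.
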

\begin{proof}
It suffices to show that $\G(\varphi)\subseteq\Hyp_2(\varphi)$.
Consider $a\in \G(\varphi)$ arbitrary.
By the hypothesis, there exists some $t\in\G(\varphi)$ with $v(t)\notin 2vK$.
If $v(a)\notin 2vK$, then \Cref{L:hens-rigid-simfactors} yields that $\varphi_{K(\sqrt{-a})}$ is hyperbolic, whereby $a\in\N_{K(\sqrt{-a})/K}^\ast\subseteq \Hyp_2(\varphi)$.
Assume now that $v(a)\in 2vK$.
Then $t,at\in\G(\varphi)$ and $v(t),v(at)\notin 2vK$. Hence $\varphi_{K(\sqrt{-t})}$ and $\varphi_{K(\sqrt{-at})}$  are both hyperbolic, by \Cref{L:hens-rigid-simfactors}.
Therefore $a=t^{-1}at\in\N_{K'(\sqrt{-t})}^\ast\cdot \N_{K'(\sqrt{-at})}^\ast\subseteq \Hyp_2(\varphi)$.
\end{proof}

\begin{lem}\label{L:even-ramified-norms}
    Let $L/K$ a finite field extension.
    Let $v'$ be the unique valuation extension of $v$ to $L$.
    Assume that $[v'L:vK]$ is even. 
    Then 
    $\N_{L/K}^\ast\subseteq\sq{K}\cup t\sq{K}$ for some  $t\in \mg{K}$ with $v(t)\notin 2vK$.
\end{lem}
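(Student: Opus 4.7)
The plan is to prove two sub-claims and combine them:
\begin{itemize}
\item[(a)] the image of $v(\N_{L/K}^\ast)$ in $vK/2vK$ has order at most $2$;
\item[(b)] every $y\in\N_{L/K}^\ast$ with $v(y)\in 2vK$ lies in $\sq K$.
\end{itemize}
Given (a) and (b), the image of $\N_{L/K}^\ast$ in $\mg K/\sq K$ is a subgroup of order at most $2$: for $y_1,y_2\in\N_{L/K}^\ast$ with $v(y_i)\notin 2vK$, part (a) gives $v(y_1/y_2)\in 2vK$, and then (b) applied to $y_1/y_2\in\N_{L/K}^\ast$ gives $y_1\equiv y_2\pmod{\sq K}$. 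Any representative $t$ of the non-trivial coset must satisfy $v(t)\notin 2vK$, for otherwise (b) would force $t\in\sq K$; and if the quotient is trivial, $\N_{L/K}^\ast\subseteq\sq K$ and any $t\in\mg K$ with $v(t)\notin 2vK$ works. Such a $t$ always exists because $[v'L:vK]$ even forces $vK\neq 2vK$: $2$-divisibility of $vK$ combined with the torsion-freeness of $v'L$ would make the finite group $v'L/vK$ have trivial $2$-torsion, contradicting even order.

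Claim (a) follows from cyclicity of the $2$-Sylow subgroup $S$ of the finite abelian group $v'L/vK$. The identity $v(\N_{L/K}(x))=[L:K]\cdot v'(x)$ shows that $v(\N_{L/K}^\ast)+2vK$ equals the image of the homomorphism
\[
\phi\colon v'L/vK \to vK/2vK,\qquad \delta+vK\mapsto [L:K]\delta+2vK,
\]
which is well-defined because $[v'L:vK]\cdot v'L\subseteq vK$ and $[L:K]$ is even. Since $vK/2vK$ is $2$-torsion, $\phi$ factors through $S$. The assumption $\car(Kv)\neq 2$ places the full $2$-part of the inertia group of the Galois closure of $L/K$ inside its tame quotient, which is cyclic by standard valuation theory; as $v'L/vK$ embeds into the value-group extension of that Galois closure, $S$ is cyclic. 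A cyclic $2$-group mapping to a $2$-torsion group has image of order at most $2$.

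For claim (b) I use the tame residue formula $\overline{\N_{L/K}(x)}=\N_{Lv'/Kv}(\bar x)^{[v'L:vK]}$, valid for units $x\in\mc O_{v'}^\times$ when $L/K$ is tame. Since $[v'L:vK]$ is even, the right-hand side lies in $\sq{Kv}$, whence $\N_{L/K}(x)\in\sq{\mc O_v^\times}\subseteq\sq K$ by Hensel's lemma (using $\car(Kv)\neq 2$). For general $y=\N_{L/K}(x)\in\N_{L/K}^\ast$ with $v(y)=2\delta\in 2vK$, pick $s\in K^\times$ with $v(s)=\delta$; then $y/s^2\in\mc O_v^\times$, and the identity
\[
y/s^2 \;=\; \N_{L/K}(xs^{-1})\cdot\bigl(s^{(n-2)/2}\bigr)^2 \qquad (n=[L:K]\text{ even})
\]
gives $y/s^2\equiv \N_{L/K}(xs^{-1})\pmod{\sq K}$, reducing the problem to the residue of a norm whose argument need not itself be a unit.

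The main obstacle is completing claim (b) when $\N_{L/K}(xs^{-1})$ is not a unit. I plan to handle this by first passing to the maximal tamely ramified subextension of $L/K$, whose complement has degree a power of $\car(Kv)$, odd and therefore invisible modulo squares; within the tame case, I would exploit the explicit description of tame totally ramified extensions as adjunctions of $e$-th roots of uniformizers, using cyclicity of $S$ from (a) to adjust representatives so that the residue computation ultimately factors through a unit-norm residue, at which point the tame residue formula applies.
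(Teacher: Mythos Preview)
Your decomposition into claims (a) and (b) is a legitimate reformulation of the lemma, but both parts have real problems.

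\textbf{Claim (a): the cyclicity argument fails.} The lemma is stated for an arbitrary henselian valuation $v$, and in the paper it is later applied (via \Cref{P:itpowser-hensval}) to valuations of rank $n$ on iterated power series fields. In that setting the $2$-Sylow of $v'L/vK$ need not be cyclic: for $K=k(\!(t_1)\!)(\!(t_2)\!)$ with its natural rank-$2$ valuation and $L=K(\sqrt{t_1},\sqrt{t_2})$, one has $v'L/vK\cong(\zz/2)^2$. Your appeal to ``the tame quotient of the inertia group is cyclic'' is a rank-$1$ phenomenon and does not survive in higher rank. Claim (a) is nonetheless \emph{true}, but for a different reason: with $n=[L:K]=efd$, $e=[v'L:vK]$, $m=n/2$, one checks that the image equals $m\cdot(v'L/vK)$, and for any finite abelian group $G$ of order $e$ with $2$-part $2^a$, the group $2^{a-1}G$ has order at most $2$ (it vanishes unless the $2$-Sylow of $G$ is cyclic). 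You would need this elementary group-theoretic fact, not cyclicity of $S$.

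\textbf{Claim (b): the acknowledged gap is the heart of the matter.} You reduce (b) to analysing $\N_{L/K}(xs^{-1})$, observe that this need not be a unit, and propose to fix this by passing to the maximal tame subextension and using explicit uniformizers. But in the general henselian setting (higher rank, possible odd defect, residue field not perfect) the ``explicit description of tame totally ramified extensions as adjunctions of $e$-th roots of uniformizers'' is not available, and your tame residue formula also requires justification beyond the rank-$1$ defectless case. As written, (b) is not proved.

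\textbf{Comparison with the paper.} The paper avoids both difficulties by a short Galois/Sylow argument that works uniformly: reduce to $L/K$ separable, sandwich $L$ between odd-degree and $2$-power pieces via Sylow subgroups in the Galois closure, locate a single quadratic step $K'''/K''$ inside the resulting $2$-tower at which the value group first grows, and apply \Cref{rigid-norms} there to get $\N_{K'''/K''}^\ast=\sq{K''}\cup -s\sq{K''}$. Taking $t=\N_{K''/K}(-s)$ and pushing norms down through the tower gives the conclusion directly, with $v(t)\notin 2vK$ following because the odd-degree piece $K''/K$ has odd ramification index. This sidesteps any structural claim about $v'L/vK$ and any residue-norm formula.
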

\begin{proof}
We may assume without loss of generality that $L/K$ is separable.
Let $N/K$ be the normal closure of $L/K$ and let $M$ be the fixed field of a $2$-Sylow subgroup of the Galois group of $N/L$. 
Then $M/L$ is a finite extension of odd degree and $N/M$ is a $2$-extension. 
It follows that $\mg{L}\subseteq \sq{L}\cdot \N_{M/L}^\ast$.
Let $w$ be the unique valuation extension of $v$ to $M$.
Then $v'=w|_L$.
Let $K'$ be the fixed field of a $2$-Sylow subgroup of the Galois group of $N/K$ that contains the Galois group of $N/M$.
Then $K'/K$ is an extension of odd degree contained in $M/K$ and $M/K'$ is a $2$-extension.
Since $[K':K]$ is odd, so is $[wK':wK]$.
Let $K''/K'$ be a maximal subextension of $M/K'$ with $wK''=wK'$.
    Since $[wL:vK]$ is even, we obtain that $wM\neq wK'=wK''$.
    Hence $M\neq K''$. Since $M/K'$ is a $2$-extension, it follows that $M/K''$ contains a quadratic extension $K'''/K''$.
    Then $K'''=K''(\sqrt{s})$ for some $s\in \mg{K''}$.
    By the choice of $K''$, we obtain that $wK'''\neq wK''$,
    whereby $w(s)\notin 2wK''$.
    Therefore $\N_{K'''/K''}^\ast= \sq{K''}\cup -s\sq{K''}$.
    Let $t=\N_{K''/K}(-s)$.
    Then $\N_{M/K}^\ast\subseteq 
    N_{K'''/K}^\ast=\N_{K''/K}(\N_{K'''/K''}^\ast)\subseteq\sq{K}\cup t\sq{K}$.
    Since $wK''=wK'$, we have that $[wK'':vK]$ divides $[K':K]$ and hence is odd.
    Therefore $v(t)\notin 2vK$.
\end{proof}

Given a finite field extension $L/K$, the valuation $v$ is called \emph{unramified in $L/K$} if for the unique extension $v'$ of $v$ to $L$, we have $v'L=vK$ and the residue field extension $Lv'/Kv$ is separable.

\begin{prop}\label{P:hens-hyp-norms-trichotomy}
    Let $M/K$ be a finite field extension and  $\varphi$ a quadratic form such that $\varphi_M$ is hyperbolic.
    Let $L/K$ be a maximal subextension of $M/K$ in which $v$ is unramified.
    Then one of the following holds:
    \begin{enumerate}[$(i)$]
        \item $\varphi_{L}$ is hyperbolic. 
        \item $\N_{M/K}^\ast\subseteq\sq{K}$.
        \item $\N_{M/K}^\ast\subseteq \sq{K}\cup t\sq{K}=\N^\ast_{K(\sqrt{-t})/K}$ for some $t\in \mg{K}$ with $v(t)\notin 2vK$ and such that $\varphi_{K(\sqrt{-t})}$ is hyperbolic.
    \end{enumerate}
\end{prop}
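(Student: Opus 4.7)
The plan is to split into cases based on the parity of the ramification index $e = [v'M : vK]$, where $v'$ is the unique extension of $v$ to $M$ and $v''=v'|_L$. Since $L/K$ is unramified, $v''L=vK$, whence $[v'M : v''L]=e$. The maximality of $L$ in $M/K$ moreover forces the residue extension $Mv'/Lv''$ to be purely inseparable: any nontrivial separable subextension would lift to an unramified extension of $L$ inside $M$, contradicting maximality.

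If $e$ is odd, I aim to establish (i) via Springer's theorem. Indeed, in the henselian decomposition of $[M:L]$ as the product of the ramification index, the residue degree and the defect, the ramification index equals $e$, while the residue degree and the defect are powers of $p=\car(Kv)$. By hypothesis $p\neq 2$, so these factors are all odd, hence so is $[M:L]$; Springer's theorem then transfers the hyperbolicity of $\varphi_M$ down to $\varphi_L$.

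If $e$ is even, \Cref{L:even-ramified-norms} applied to $M/K$ yields some $c\in\mg{K}$ with $v(c)\notin 2vK$ and $\N_{M/K}^\ast\subseteq\sq{K}\cup c\sq{K}$. If $\N_{M/K}^\ast\subseteq\sq{K}$, case (ii) holds. Otherwise, I pick $t\in\N_{M/K}^\ast$ with $v(t)\notin 2vK$; necessarily $t\sq{K}=c\sq{K}$, so $\N_{M/K}^\ast\subseteq\sq{K}\cup t\sq{K}$. Since $\varphi_M$ is hyperbolic, Scharlau's Norm Principle gives $\N_{M/K}^\ast\subseteq\G(\varphi)$, and in particular $t\in\G(\varphi)$. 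Then \Cref{L:hens-rigid-simfactors} yields that $\varphi_{K(\sqrt{-t})}$ is hyperbolic, while \Cref{rigid-norms} applied to $-t$ identifies $\N^\ast_{K(\sqrt{-t})/K}=\sq{K}\cup t\sq{K}$. This is case (iii).

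The main obstacle is the parity argument in the odd-$e$ case: one must invoke the henselian decomposition of $[M:L]$ into ramification index, residue degree and defect, and use $\car(Kv)\neq 2$ to guarantee that the latter two factors are odd. Once $[M:L]$ is known to be odd, Springer's theorem concludes that case; the even-$e$ case is essentially a direct packaging of the three preceding lemmas together with Scharlau's Norm Principle.
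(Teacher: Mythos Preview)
Your proof is correct and follows essentially the same route as the paper's: the paper assumes $(i)$ and $(ii)$ fail, deduces $[M:L]$ even via Springer's theorem, then $[wM:wL]$ even ``by the choice of $L/K$'', and concludes via \Cref{L:even-ramified-norms}, Scharlau's Norm Principle and \Cref{L:hens-rigid-simfactors}. Your case split on the parity of $e$ is the contrapositive organisation of the same argument, with the added detail (pure inseparability of $Mv'/Lv''$ and Ostrowski's bound on the defect) spelling out why $e$ odd forces $[M:L]$ odd, a step the paper leaves implicit.
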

\begin{proof}
    Assume that $(i)$ and $(ii)$ do not hold.
    Hence $\varphi_L$ is not hyperbolic and there exists $x\in \mg{K}$ such that $\N_{M/K}(x)\notin\sq{K}$. 
    Since $\varphi_M$ is hyperbolic, it follows by \cite[Chap.~VII, Cor.~2.6]{Lam05}
    that $[M:L]$ is even.
    By the choice of $L/K$, this implies that $[wM:wL]$ is even, where $w$ is the unique extension of $v$ to $M$.
    It follows by \Cref{L:even-ramified-norms} that $\N^\ast_{M/K}\subseteq \sq{K}\cup t\sq{K}$ for some $t\in\mg{K}$ with $v(t)\notin 2vK$.
    As $\N_{M/K}(x)\notin \sq{K}$, it follows that $\N_{M/K}(x)\in t\sq{K}$.
    Furthermore,  Scharlau's Norm Principle \cite[Chap.~VII, Theorem~4.3]{Lam05} yields that $\N_{M/K}(x)\in \G(\varphi)$.
    Hence $t\in\G(\varphi)$, and we conclude by \Cref{L:hens-rigid-simfactors} that $\varphi_{K(\sqrt{-t})}$ is hyperbolic.
    \end{proof}

The form $\varphi$ is called \emph{$v$-unimodular} if $\vf\simeq \la a_1,\dots,a_n\ra$ for $n=\dim(\vf)$ and certain $a_1,\dots,a_n\in\mg{\mc{O}}_v$.

\begin{cor}\label{C:equimodular-dec}
    Let $\varphi$ be an anisotropic quadratic form over $K$. 
    Let $r\in\nat$, $\varphi_1,\dots,\varphi_r$\,  $v$-unimodular quadratic forms  over $K$ and $t_1,\dots,t_r\in \mg{K}$ with $v(t_i)\not\equiv v(t_j)\bmod 2vK$ for $1\leq i<j\leq r$ such that $\varphi\simeq  t_1\varphi_1\perp\ldots\perp t_r\varphi_r$.
Assume that $v(\G(\varphi))\subseteq 2vK$.
Then $\H(\varphi)=\sq{K}\H(\vf_1,\dots,\vf_r)$. 
\end{cor}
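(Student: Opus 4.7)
The plan is to prove the two inclusions separately. The inclusion $\sq{K}\H(\vf_1,\dots,\vf_r)\subseteq \H(\vf)$ is immediate: any finite extension $L/K$ over which $(\vf_1)_L,\dots,(\vf_r)_L$ all become hyperbolic also makes $\vf_L\simeq t_1(\vf_1)_L\perp\dots\perp t_r(\vf_r)_L$ hyperbolic, so $\Hyp(\vf_1,\dots,\vf_r)\subseteq \Hyp(\vf)$ and the inclusion follows after multiplying by $\sq{K}$.

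For the reverse inclusion it suffices to show that $\N_{M/K}^\ast\subseteq \sq{K}\H(\vf_1,\dots,\vf_r)$ for every finite extension $M/K$ with $\vf_M$ hyperbolic. Given such $M/K$, I would take $L/K$ to be a maximal subextension of $M/K$ in which $v$ is unramified and apply \Cref{P:hens-hyp-norms-trichotomy} to $M/K$ and $\vf$. Case $(ii)$ gives $\N_{M/K}^\ast\subseteq \sq{K}$ and is trivial. Case $(iii)$ will be ruled out by the hypothesis $v(\G(\vf))\subseteq 2vK$: it would produce some $t\in\mg{K}$ with $v(t)\notin 2vK$ such that $\vf_{K(\sqrt{-t})}$ is hyperbolic, and then Scharlau's Norm Principle together with $t\in \sq{K}\cup t\sq{K}=\N_{K(\sqrt{-t})/K}^\ast$ would place $t$ into $\G(\vf)$, contradicting $v(t)\notin 2vK$.

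In case $(i)$, $\vf_L$ is hyperbolic and the task is to deduce that each $(\vf_i)_L$ is hyperbolic. Let $w$ be the unique extension of $v$ to $L$. Since $v$ is unramified in $L/K$, we have $wL=vK$, so the values $w(t_1),\dots,w(t_r)$ remain pairwise distinct modulo $2wL$ and each $(\vf_i)_L$ is still $w$-unimodular. The decisive input at this point is Springer's decomposition theorem for quadratic forms over a henselian valued field of residue characteristic different from $2$: an orthogonal sum $s_1\psi_1\perp\dots\perp s_r\psi_r$ of scaled unimodular forms whose scalars have pairwise distinct values modulo $2wL$ has Witt index equal to $\sum_{i=1}^r\wi(\psi_i)$. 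Applied to $\vf_L$, this forces each $(\vf_i)_L$ to be hyperbolic, whence $\N_{M/K}^\ast\subseteq \N_{L/K}^\ast\subseteq \Hyp(\vf_1,\dots,\vf_r)\subseteq \H(\vf_1,\dots,\vf_r)$, completing the argument. The main obstacle is precisely this last step, namely invoking Springer's theorem in the generality of arbitrary value groups; the rest is routine assembly of \Cref{P:hens-hyp-norms-trichotomy}, \Cref{rigid-norms}, and Scharlau's Norm Principle.
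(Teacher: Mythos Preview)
Your proof is correct and follows essentially the same approach as the paper: rule out case~$(iii)$ of \Cref{P:hens-hyp-norms-trichotomy} via the hypothesis $v(\G(\vf))\subseteq 2vK$, reduce to $v$-unramified extensions $L/K$, and then use the Springer-type decomposition over the henselian valued field $L$ to pass from hyperbolicity of $\vf_L$ to hyperbolicity of each $(\vf_i)_L$. The paper's write-up is more condensed (it packages both inclusions into the ``if and only if'' coming from the decomposition theorem) and cites the needed result for arbitrary value groups as \emph{Durfee's Theorem} \cite[Theorem 3.11]{BDM25}, which precisely addresses your stated concern about going beyond the discretely valued case.
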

\begin{proof}
Since $v(\G(\varphi))\subseteq 2vK$,
the situation $(iii)$ in \Cref{P:hens-hyp-norms-trichotomy} does not occur. So \Cref{P:hens-hyp-norms-trichotomy} yields that $\H(\varphi)$ is generated by $\sq{K}$ and the groups $\N_{L/K}^\ast$ where $L/K$ is a $v$-unramified finite field extension such that $\varphi_L$ is hyperbolic.

Consider a finite field extension $L/K$ in which $v$ is unramified.
Let $w$ denote the unique valuation extension of $v$ to $L$.  Then $w$ is henselian, and since $L/K$ is $v$-unramified, we have $wL=vK$.
    In particular $w(t_i)\not\equiv w(t_j)\bmod 2wL$ for $1\leq i<j\leq r$.
    Since $\varphi_L\simeq t_1(\varphi_1)_L\perp\ldots\perp t_r(\varphi_r)_L$ and $w$ is henselian, 
    it follows by Durfee's Theorem (cf. \cite[Theorem 3.11]{BDM25}) that $\varphi_L$ is hyperbolic if and only if $(\varphi_1)_L,\dots,(\varphi_r)_L$ are hyperbolic. This together yields the statement.
\end{proof}

\begin{rem}
Based on \Cref{P:hens-hyp-norms-trichotomy}, one can easily retrieve and extend \cite[Prop.~1]{Gil97} as well as \cite[Prop.~5.2 \& 5.3]{Bha14}, while removing the restriction to characteristic $0$ in both cases. 

In \cite[Sect.~5]{Bha14}, it is assumed that 
$\H(\vf)\neq \G(\vf)$.
The latter implies that $\vf$ is not Witt equivalent to any scaled Pfister form.
Hence  \cite[Prop.~5.2 \& 5.3]{Bha14} is covered by the following variation.
\end{rem}

\begin{cor}[Bhaskhar]\label{Bhakhar-S5}
    Let $K=k(\!(t)\!)$ for a field $k$ with \hbox{$\car(k)\neq 2$}.  
    Let $\pi$ be an anisotropic Pfister form over $k$ and $\vf$ a quadratic form over $k$ which is not Witt equivalent to any scalar multiple of $\pi$.
    Consider the quadratic form $\rho=\varphi\perp t\pi$ over $K$.
    Then $\H(\rho)\subseteq \H(\vf)\cdot \sq{K}$. 
\end{cor}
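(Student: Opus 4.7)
The plan is to apply \Cref{C:equimodular-dec} to the anisotropic part of $\rho$, taking $v$ to be the $t$-adic henselian valuation on $K=k(\!(t)\!)$, for which $Kv=k$ and $vK=\zz$. Since $\vf_\an$ and $\pi$ are anisotropic over $k$, the orthogonal sum $\vf_\an\perp t\pi$ is anisotropic over $K$ by Springer's theorem, the two summands lying in distinct cosets of $vK/2vK$. Hence $\rho_\an\simeq\vf_\an\perp t\pi$, and since $\rho$ and $\rho_\an$ differ only by hyperbolic planes, we have $\H(\rho)=\H(\rho_\an)$.

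The crux is to verify that $v(\G(\rho_\an))\subseteq 2vK$. Suppose for contradiction that $a\in\G(\rho_\an)$ has odd valuation; write $a=tu$ with $u\in\mg{\mc{O}_v}$, and let $\bar u\in\mg{k}$ denote its residue. Since $t^2\in\sq{K}$, scaling by $a$ gives $a\rho_\an\simeq u\pi\perp tu\vf_\an$, so the hyperbolic form $a\rho_\an\perp(-\rho_\an)$ is isometric to
$$(u\pi\perp(-\vf_\an))\perp t(u\vf_\an\perp(-\pi))\,.$$
Applying Durfee's theorem (as invoked in the proof of \Cref{C:equimodular-dec}) to this $v$-unimodular decomposition yields that $u\pi\perp(-\vf_\an)$ is hyperbolic over $K$. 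Since $1+\mfm_v\subseteq\sq{K}$, this form is isometric to $\bar u\pi\perp(-\vf_\an)$, a form descending from $k$. By Springer's theorem it is then hyperbolic over $k$, giving $\vf_\an\simeq\bar u\pi$ over $k$. Hence $\vf$ is Witt equivalent to the scalar multiple $\bar u\pi$ of $\pi$, contradicting the hypothesis.

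With the inclusion $v(\G(\rho_\an))\subseteq 2vK$ established, \Cref{C:equimodular-dec} applies to $\rho_\an$ with $r=2$, $\vf_1=\vf_\an$, $\vf_2=\pi$, $t_1=1$ and $t_2=t$, yielding $\H(\rho_\an)=\sq{K}\H(\vf_\an,\pi)$. Since every finite extension $L/K$ making both $(\vf_\an)_L$ and $\pi_L$ hyperbolic in particular makes $\vf_L$ hyperbolic, one has $\H(\vf_\an,\pi)\subseteq\H(\vf_\an)=\H(\vf)$, and hence $\H(\rho)\subseteq\sq{K}\cdot\H(\vf)$. The main obstacle is the dichotomy in the middle paragraph: the hypothesis that $\vf$ is not Witt equivalent to any scalar multiple of $\pi$ is exactly what rules out odd-valuation similarity factors, which would otherwise prevent \Cref{C:equimodular-dec} from applying and make the analogue of Bhaskhar's statement fail.
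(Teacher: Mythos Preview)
Your argument is correct and proceeds along the same lines as the paper's one-line proof ``This is immediate from \Cref{P:hens-hyp-norms-trichotomy}'': you route through its packaged consequence \Cref{C:equimodular-dec} and make explicit the key point that the hypothesis on $\vf$ excludes odd-valuation similarity factors, which is exactly the exclusion of case~(iii) of the trichotomy.

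One point needs tightening. Since \Cref{C:equimodular-dec} treats $\vf_\an$ and $\pi$ as $v$-unimodular forms over $K$, your chain $\H(\vf_\an,\pi)\subseteq\H(\vf_\an)=\H(\vf)$ lands in $\H(\vf_K)\subseteq\mg{K}$, whereas the statement's $\H(\vf)$ is the subgroup of $\mg{k}$ (note the explicit factor $\sq{K}$ in the conclusion, and compare the convention in the remark preceding the corollary). To descend, go back into the proof of \Cref{C:equimodular-dec}: it shows that $\H(\rho_\an)$ is generated by $\sq{K}$ together with the groups $\N_{L/K}^\ast$ for $v$-unramified finite extensions $L/K$ with $\rho_L$ hyperbolic. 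For such $L$, with residue field $\ell$, both $\vf_\ell$ and $\pi_\ell$ are hyperbolic; since $\pi$ is anisotropic over $k$, $[L:K]=[\ell:k]$ is even, whence $\N_{L/K}^\ast\subseteq\N_{\ell/k}^\ast\cdot\sq{K}\subseteq\Hyp(\vf)\cdot\sq{K}$ with $\Hyp(\vf)$ computed over $k$. Your inclusion $\H(\vf_\an,\pi)\subseteq\H(\vf_\an)$ discards the $\pi$-hyperbolicity, which is precisely what forces the even residue degree; keeping it gives the descent for free.
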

\begin{proof}
    This is immediate from \Cref{P:hens-hyp-norms-trichotomy}.
\end{proof}

\section{Constructing examples over iterated power series fields} 
\label{S:construction}

We will now construct, for some given $n\in\nat$, a quadratic form $\rho\in \I^nK$ for some field $K$ such that ${\bf PSim}^+(\rho)$ is not rationally connected. 
To obtain such examples, we will take an iterated power series field $K=k(\!(t_1)\!)\ldots(\!(t_r)\!)$ in $r$ variables over some base field $k$ and build a form 
$\rho=\bigperp_{I\in\mc{P}} t_{I}(\varphi_I)_K$ over $K$ starting from
a family of $(\varphi_{I})_{\mc{I}\in\mc{P}}$ over $k$, where $\mc{P}$ is the power set of $\{1,\dots,r\}$.
We will use the properties of the sets $\G((\varphi_I)_{I\in\mc{P}})$ and $\H((\varphi_I)_{I\in\mc{P}})$ over $k$ to obtain that $\G(\rho)\neq \H(\rho)$.

An obstruction  is indicated by the following observation.
If we consider a form over the power series field in one variable $K=k(\!(t)\!)$ of the shape $\lla t\rra\otimes \vf_K$ for some quadratic form $\vf$ defined over $k$, then we have the equality $\G(\lla t\rra\otimes \vf_K)=\H(\lla t\rra\otimes \vf_K)$, regardless of the relation between $\G(\vf)$ and $\H(\vf)$, as the next statement indicates.

\begin{prop}
    Assume that $K=k(\!(t)\!)$ for a field $k$ with $\car(k)\neq 2$ and let $\varphi$ be a quadratic form over $k$. 
    Consider the quadratic form $\rho=\varphi\perp t\varphi$ over $K$.
    Then $\H(\varphi_K)=\sq{K}\cdot\H(\varphi)$ and
    $\G(\rho)=\H(\rho)=\Hyp_2(\rho)=\G(\varphi_{K})\cup t\G(\varphi_{K})$. 
    Furthermore, if $\varphi$ is not hyperbolic, then $\G(\varphi_{K})=\sq{K}\cdot\G(\varphi)$.
\end{prop}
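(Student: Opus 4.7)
The plan is to handle the three equalities $\G(\rho)=\H(\rho)=\Hyp_2(\rho)$ in one stroke via the henselian criterion \Cref{T:2henselian-odd-simfactor}, and then to obtain the explicit descriptions of $\G(\rho)$, $\G(\varphi_K)$, and $\H(\varphi_K)$ by combining Springer/Durfee decompositions with the trichotomy of \Cref{P:hens-hyp-norms-trichotomy}. Throughout, let $v$ denote the $t$-adic henselian valuation on $K$, so $vK=\zz$ and $Kv=k$; one may assume that $\varphi$ is not hyperbolic, since otherwise $\rho$ is hyperbolic and the claims are trivial (with the formula for $\H(\varphi_K)$ to be interpreted accordingly).

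First I would observe that $t\in\G(\rho)$: indeed
$$t\rho\simeq t\varphi_K\perp t^2\varphi_K\simeq t\varphi_K\perp\varphi_K\simeq\rho,$$
using $t^2\in\sq K$. Since $v(t)=1\notin 2vK$, we have $v(\G(\rho))\not\subseteq 2vK$, and \Cref{T:2henselian-odd-simfactor} yields at once that $\G(\rho)=\H(\rho)=\Hyp_2(\rho)$, disposing of three of the five asserted equalities.

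Next I would compute $\G(\rho)$ and $\G(\varphi_K)$ explicitly. The decomposition $\rho\simeq\varphi_K\perp t\varphi_K$ is already a Springer decomposition into $v$-unimodular parts with scalars $1,t$ of distinct residues in $vK/2vK$. For $a\in\mg K$, writing $a=us^2$ or $a=tus^2$ with $u\in\mc{O}_v^\times$ and $s\in\mg K$ according to the parity of $v(a)$, one finds $a\rho\simeq u\varphi_K\perp ut\varphi_K$ in both cases (absorbing $t^2$ into a square). By Durfee's theorem (cited in the proof of \Cref{C:equimodular-dec}), $a\rho\simeq\rho$ iff $\bar u\varphi\simeq\varphi$ over $k$, i.e.\ $\bar u\in\G(\varphi)$. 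The same Durfee argument applied to $\varphi_K$ alone yields $\G(\varphi_K)=\sq K\cdot\G(\varphi)$: the inclusion $\supseteq$ is immediate, and for $\subseteq$ one checks that $a=ut^ns^2\in\G(\varphi_K)$ cannot have $n$ odd, since otherwise matching the Springer components of $a\varphi_K$ and $\varphi_K$ would force $\varphi$ to be hyperbolic. Combining gives $\G(\rho)=\G(\varphi_K)\cup t\G(\varphi_K)$.

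Finally, for $\H(\varphi_K)=\sq K\cdot\H(\varphi)$, the inclusion $\supseteq$ is clear by extending any $k'/k$ with $\varphi_{k'}$ hyperbolic to the unramified extension $k'((t))/K$. For $\subseteq$, I would apply the trichotomy of \Cref{P:hens-hyp-norms-trichotomy} to each finite $M/K$ with $\varphi_M$ hyperbolic, letting $L/K$ be a maximal unramified subextension. Case $(iii)$ would provide $t'\in\mg K$ with $v(t')\notin 2vK$ and $\varphi_{K(\sqrt{-t'})}$ hyperbolic; but $K(\sqrt{-t'})/K$ is ramified with unchanged residue field $k$, so Springer's theorem would force $\varphi$ hyperbolic, a contradiction. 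Case $(ii)$ contributes only $\sq K$. In case $(i)$, $L=k'((t))$ with $\varphi_{k'}$ hyperbolic, and $[k':k]$ is forced to be even by Springer's theorem in odd degree. A short Laurent-series computation, using $1+\mfm_v\subseteq\sq K$ and the isomorphism $\mg K/\sq K\simeq\langle t\rangle/\langle t^2\rangle\times\mg k/\sq k$, then gives $\N_{L/K}^\ast\subseteq\sq K\cdot\N_{k'/k}^\ast\subseteq\sq K\cdot\H(\varphi)$, since $t^{[k':k]}\in\sq K$. The hardest part is this last paragraph: one must simultaneously exclude case $(iii)$ (by exploiting that $\varphi$ descends to the residue field) and execute the unramified norm computation, which relies crucially on the even parity of $[k':k]$.
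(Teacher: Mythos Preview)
Your proof is correct and tracks the paper's argument closely for the bulk of the proposition: both of you obtain $\G(\rho)=\H(\rho)=\Hyp_2(\rho)$ by observing $t\in\G(\rho)$ and invoking \Cref{T:2henselian-odd-simfactor}, and both compute $\G(\rho)$ and $\G(\varphi_K)$ via Springer-type decomposition (the paper cites \cite[Chap.~VI, Theorem 1.4]{Lam05} where you cite Durfee, but these are the same input).

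The one genuine divergence is in the inclusion $\H(\varphi_K)\subseteq\sq{K}\cdot\H(\varphi)$. The paper argues directly: for any finite $L'/K$ with $\varphi_{L'}$ hyperbolic and $[L':K]$ even, it passes to the full residue field $\ell$ of $L'$ and asserts $\N_{L'/K}^\ast\subseteq\sq{K}\cdot\N_{\ell/k}^\ast$, with $\varphi_\ell$ hyperbolic by Springer. You instead route through the trichotomy of \Cref{P:hens-hyp-norms-trichotomy}, which lets you replace an arbitrary $M/K$ by its maximal unramified subextension $L=k'(\!(t)\!)$; the norm computation then becomes the transparent Laurent-series calculation you sketch. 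Your approach is a bit longer but has two mild advantages: it reuses machinery already in the paper, and it makes the norm containment completely explicit (the paper's asserted inclusion $\N_{L'/K}^\ast\subseteq\sq{K}\cdot\N_{\ell/k}^\ast$ for general $L'/K$ of even degree is true here but not entirely obvious without further comment, since one implicitly needs that the residue degree $f=[\ell:k]$ is even, which follows only after invoking Springer in odd degree). The paper's route is shorter once one accepts that containment. Your handling of case~(iii) --- ruling it out because $K(\sqrt{-t'})$ has residue field $k$ and $\varphi$ is not hyperbolic there --- is clean and correct.
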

\begin{proof}
    Let $v$ denote the $t$-adic valuation on $K=k(\!(t)\!)$. 
    Since $v$ is henselian,  $t\in\G(\rho)$, $vK=\zz$ and $v(t)=1$, \Cref{T:2henselian-odd-simfactor} yields that $\G(\rho)=\H(\rho)=\Hyp_2(\rho)$.

    Given a finite field extension $\ell/k$ such that $\varphi_\ell$ is hyperbolic, we have with $\ell(\!(t)\!)/k$ a finite field extension such that $\varphi_{\ell(\!(t)\!)}$ is hyperbolic and 
    $\N_{\ell/k}^\ast\subseteq\N_{\ell(\!(t)\!)/K}^\ast$.
    This argument shows that $\H(\varphi)\subseteq \H(\varphi_K)$.
    Consider now a finite field extension $L'/K$ such that $\vf_{L'}$ is hyperbolic.
    If $[L':K]$ is odd, then $\varphi_K$ is hyperbolic and consequently $\varphi$ is hyperbolic over $k$.
    Assume now that $[L':K]$ is even.
    Let $\ell$ denote the residue field of the unique  extension of $v$ to $L$.
    Then $\ell/k$ is a finite field extension, $\varphi_\ell$ is hyperbolic, and since $[L':K]$ is even, we  have that
    $\N_{L'/K}^\ast\subseteq\sq{K}\N_{\ell/k}^\ast$.
    This together shows that $\H(\vf_K)= \sq{K}\H(\vf)$.

    Consider $c\in\mg{k}$. We have 
    $\la 1,-c\ra\otimes \rho \simeq (\la 1,-c\ra\otimes \varphi)_K\perp t(\la 1,-c\ra\otimes \varphi)_K$.
    In view of \cite[Chap.~VI, Theorem 1.4]{Lam05}, this form is hyperbolic over $K$ if and only if $\la 1,-c\ra\otimes \varphi$ is hyperbolic over $k$. In other terms, $c\in\G(\rho)$ if and only if $c\in\G(\varphi)$.
    This argument shows that $\G(\rho)\cap\mg{k}=\G(\varphi)$.
    Furthermore $t\in\G(\rho)$.
    Since $\mg{K}=\mg{k}\sq{K}\cup t\mg{k}\sq{K}$, we conclude that $\G(\varphi_K)\cup t\G(\varphi_K)= \G(\rho)$.

    Assume that $\varphi$ is not hyperbolic. It follows by \cite[Chap.~VI, Theorem 1.4]{Lam05} that $v(\G(\varphi_K))\subseteq 2vK$. Hence $\G(\varphi_K)=\G(\rho)\cap \mg{k}\sq{K}=(\G(\rho)\cap\mg{k})\sq{K}=\G(\varphi)\sq{K}$.
\end{proof}

In order to apply \Cref{P:hens-hyp-norms-trichotomy} in the situation of an iterated power series field, we rely on the following well-known fact.
For convenience and by lack of a reference, we include a proof.

\begin{lem}\label{P:itpowser-hensval}
    Assume that $K=k(\!(t_1)\!)\ldots(\!(t_n)\!)$ for some $n\in\nat$ and a field $k$.
    Then $K$ carries a henselian valuation $v$ such that $\mg{\mc{O}}_v=\mg{k}+\mfm_v$ and $vK=(\zz^n,\leq_{\mathsf{lex}_n})$ where $\leq_{\mathsf{lex}_n}$ denotes the lexicographical ordering on $\zz^n$.
\end{lem}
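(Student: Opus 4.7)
The plan is to argue by induction on $n$. The base case $n=1$ is the standard $t_1$-adic valuation on $k(\!(t_1)\!)$, which is henselian with value group $\zz$; since every nonzero element of $k(\!(t_1)\!)$ has the form $t_1^m(c_0+c_1t_1+\cdots)$ with $c_0\in\mg{k}$, the $v$-units are precisely the elements of $\mg{k}+\mfm_v$.

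For the inductive step, I would set $K_0=k(\!(t_1)\!)\cdots(\!(t_{n-1})\!)$ and take, by induction, a henselian valuation $w$ on $K_0$ with value group $(\zz^{n-1},\leq_{\mathsf{lex}_{n-1}})$ satisfying $\mg{\mc{O}}_w=\mg{k}+\mfm_w$. Let $u$ denote the $t_n$-adic valuation on $K=K_0(\!(t_n)\!)$, which is henselian with value group $\zz$ and residue field $K_0$. I would then form the composite valuation $v$ whose ring $\mc{O}_v$ is the preimage of $\mc{O}_w\subseteq K_0$ under the residue map $\mc{O}_u\to K_0$. Explicitly, for $f\in\mg{K}$ write $f=t_n^{u(f)}g$ with $g\in\mg{\mc{O}}_u$ and put $v(f)=(u(f),w(\bar g))\in\zz\oplus\zz^{n-1}$, equipping the target with the lexicographic order where the first coordinate dominates; this ordered group is isomorphic to $(\zz^n,\leq_{\mathsf{lex}_n})$. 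The residue field of $v$ coincides with the residue field of $w$, namely $k$.

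It remains to verify the unit equality and henselianity of $v$. For the former, given $f\in\mg{\mc{O}}_v$, its image in the residue field $k$ is some $c\in\mg{k}\subseteq K$, and then $f-c\in\mfm_v$, giving $f\in\mg{k}+\mfm_v$; the reverse inclusion is immediate from the fact that $\mg{k}\subseteq\mg{\mc{O}}_v$. For henselianity, the key fact to invoke is that a composite of two henselian valuations (with the inner one acting on the residue field of the outer) is henselian: a polynomial $f\in\mc{O}_v[X]$ with a simple root $\alpha\in k$ is lifted in two stages, first to a simple root $\beta\in K_0$ of the reduction $\tilde f\in\mc{O}_w[X]$ using henselianity of $w$, and then to a root $\gamma\in K$ using henselianity of $u$ applied to $f$ with the simple root $\beta$ of its $u$-reduction. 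The step needing the most care is bookkeeping how the value groups and residue fields stack in the composite, so that the lexicographic order comes out on the nose; once this is set up correctly, all assertions follow directly.
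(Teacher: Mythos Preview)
Your proposal is correct and follows essentially the same approach as the paper: induction on $n$, forming the composite of the $t_n$-adic valuation on $K$ with the inductively obtained henselian valuation on $k(\!(t_1)\!)\cdots(\!(t_{n-1})\!)$, and then checking the value group, henselianity, and the unit description. The only cosmetic differences are that the paper starts the induction at $n=0$, cites \cite[Sect.~2.3 and Cor.~4.1.4]{EP05} for the composite construction and its henselianity rather than spelling out the two-stage lifting, and verifies $\mg{\mc{O}}_v=\mg{k}+\mfm_v$ via a chain of inclusions instead of your residue-field argument.
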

\begin{proof}
    We set $K_0=k$ and $K_r=K_{r-1}(\!(t_r)\!)$ for $1\leq r\leq n$.
    We prove the statement by induction on $n$.
    If $n=0$, then the statement is trivial. 
    Assume that $n>0$. By the induction hypothesis, $K_{n-1}$ carries a henselian valuation $v_{n-1}$ with value group $(\zz^{n-1},\leq_{\mathsf{lex}_{n-1}})$ and such that $\mg{\mc{O}}_{v_{n-1}}=\mg{k}+\mfm_{v_{n-1}}$.
    Let $w$ denote the $t_n$-adic valuation on $K=K_{n-1}(\!(t_n)\!)$.
    Then $\mg{\mc{O}}_w=\mg{K}_{n-1}+\mfm_{v_{n-1}}$ and $w$ is a complete discrete valuation on $K$, hence in particular henselian.
    We take $v$ to be the composition on $w$ with $v_{n-1}$ as defined in \cite[Sect.~2.3]{EP05}.
    Then $vK$ is isomorphic as an ordered abelian group to $wK\times v_{n-1}K_{n-1}$ lexicographically ordered, and hence to $(\zz^n,\leq_{\mathsf{lex}_n})$.
    Since $w$ and $v_{n-1}$ are henselian, so is $v$, by \cite[Cor.~4.1.4]{EP05}.
    Furthermore $\mg{\mc{O}}_v\subseteq \mg{\mc{O}}_{v_{n-1}}+\mfm_w
    \subseteq \mg{k}+\mfm_{v_{n-1}}+\mfm_{w}\subseteq\mg{k}+\mfm_v$, whereby $\mg{\mc{O}}_v=\mg{k}+\mfm_{v}$.
\end{proof}

The following statement captures everything we need to derive new examples from the phenomena for multiquadratic field extensions discussed in \Cref{S:multiquad}.

\begin{prop}\label{GH-glued-families}
    Let $K=k(\!(t_1)\!)\ldots(\!(t_n)\!)$ for a field $k$ with $\car(k)\neq 2$ and some $n\in\nat$.
    Let $r\leq 2^n$, $\varphi_1,\dots,\varphi_r$ quadratic forms over $k$ and $I_1,\dots,I_r$ different subsets of $\{1,\dots,n\}$.
    Over $K$, consider the quadratic form $$\rho=\bigperp_{j=1}^r t_{I_j}(\varphi_j)_K\,.$$
    If $\G(\rho)\subseteq \mg{k}\sq{K}$, then the following hold:
    \begin{eqnarray*}
    \G(\rho) & = & \sq{K}\G(\varphi_1,\ldots,\varphi_r)\\
    \H(\rho) & = & \sq{K} \H(\varphi_1,\ldots,\varphi_r)\\
    \G(\rho)/\H(\rho) & \simeq & \G(\varphi_1,\dots,\varphi_r)/\H(\varphi_1,\dots,\varphi_r)
    \end{eqnarray*}
    If $\G(\rho)\not\subseteq \mg{k}\sq{K}$, then $\G(\rho)=\H(\rho)$ and $\rho_L$ is hyperbolic for a quadratic field extension $L/K$. 
\end{prop}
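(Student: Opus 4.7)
The plan is to equip $K$ with the henselian valuation $v$ furnished by \Cref{P:itpowser-hensval}, which has value group $(\zz^n,\leq_{\mathsf{lex}_n})$, residue field $k$ and $\mg{\mc{O}}_v=\mg{k}+\mfm_v$. Since the subsets $I_1,\dots,I_r$ are pairwise distinct, the values $v(t_{I_j})$ lie in pairwise distinct cosets of $2vK$ and each $(\varphi_j)_K$ is $v$-unimodular; thus $\rho$ fits exactly the setup of \Cref{C:equimodular-dec}.

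First I would dispose of the case $\G(\rho)\not\subseteq\mg{k}\sq{K}$: any $a\in\G(\rho)\setminus\mg{k}\sq{K}$ satisfies $v(a)\notin 2vK$, so \Cref{T:2henselian-odd-simfactor} immediately yields $\G(\rho)=\H(\rho)$, and \Cref{L:hens-rigid-simfactors} gives that $\rho_{K(\sqrt{-a})}$ is hyperbolic, furnishing the desired quadratic extension $L=K(\sqrt{-a})$.

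For the remaining case $\G(\rho)\subseteq\mg{k}\sq{K}$, equivalently $v(\G(\rho))\subseteq 2vK$, I would begin with the $\G$-equality. The inclusion $\supseteq$ is trivial; for $\subseteq$, write $c\in\G(\rho)$ as $as^2$ with $a\in\mg{k}$ and $s\in\mg{K}$. Then $\lla a\rra\otimes\rho\simeq\bigperp_{j=1}^r t_{I_j}(\lla a\rra\otimes\varphi_j)_K$ is hyperbolic; Durfee's Theorem applied to this $v$-equimodular decomposition, together with Springer's Theorem, forces each $\lla a\rra\otimes\varphi_j$ to be hyperbolic over $k$, yielding $a\in\G(\varphi_1,\dots,\varphi_r)$. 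For the $\H$-equality, \Cref{C:equimodular-dec} identifies $\H(\rho)$ as the subgroup generated by $\sq{K}$ together with the norm groups $\N_{L/K}^\ast$ over finite $v$-unramified extensions $L/K$ hyperbolizing each $(\varphi_j)_K$. By henselianity, each such $L$ is the unique unramified lift $L\simeq \ell\otimes_kK$ of a finite separable extension $\ell/k$, with $\N_{L/K}^\ast=\N_{\ell/k}^\ast$, and $(\varphi_j)_L$ is hyperbolic if and only if $(\varphi_j)_\ell$ is hyperbolic, by Springer's Theorem. This produces $\H(\rho)=\sq{K}\H(\varphi_1,\dots,\varphi_r)$; the reverse inclusion follows by lifting any hyperbolizing $\ell/k$ to its unramified extension.

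For the quotient identification, the natural inclusion $\G(\varphi_1,\dots,\varphi_r)\hookrightarrow\G(\rho)$ composed with the projection onto $\G(\rho)/\H(\rho)$ is surjective by the $\G$-equality, and its kernel $\G(\varphi_1,\dots,\varphi_r)\cap\H(\rho)$ reduces to $\H(\varphi_1,\dots,\varphi_r)$ via the identity $\sq{K}\cap\mg{k}=\sq{k}$, which is itself immediate from $\mg{\mc{O}}_v=\mg{k}+\mfm_v$ by reducing a unit square root modulo $\mfm_v$. The principal technical delicacy I anticipate lies in the $v$-unramified/henselian correspondence combined with Springer's Theorem, which must faithfully transport both norm groups and hyperbolicity between $K$ and $k$; all other steps amount to a careful unwinding of \Cref{C:equimodular-dec}, \Cref{T:2henselian-odd-simfactor}, and \Cref{L:hens-rigid-simfactors}, exploiting the distinctness of the cosets of $v(t_{I_j})$ modulo $2vK$.
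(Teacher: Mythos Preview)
Your approach is essentially the paper's: both split on whether $v(\G(\rho))\subseteq 2vK$, invoke \Cref{L:hens-rigid-simfactors} and \Cref{T:2henselian-odd-simfactor} in the negative case, and in the positive case combine Durfee/Springer for $\G$ with \Cref{C:equimodular-dec} for $\H$, finishing via $\sq{K}\cap\mg{k}=\sq{k}$. One imprecision to fix: the equality $\N_{L/K}^\ast=\N_{\ell/k}^\ast$ is false as stated (the left side lives in $\mg{K}$ and contains non-constant elements); what you need and what holds is $\sq{K}\cdot\N_{L/K}^\ast=\sq{K}\cdot\N_{\ell/k}^\ast$, which follows because $\N_{L/K}(t_i)=t_i^{[L:K]}\in\sq{K}$ once $[L:K]=[\ell:k]$ is even---and it is even here, since some $\varphi_j$ is non-hyperbolic over $k$ (otherwise $\rho$ would be hyperbolic and $\G(\rho)=\mg{K}\not\subseteq\mg{k}\sq{K}$).
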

\begin{proof}
By \Cref{P:itpowser-hensval}, $K$ carries a henselian valuation with $\mg{\mc{O}}_v=\mg{k}+\mfm_v$ and $vK\simeq \zz^n$. 
In particular $\mg{K}\cap v^{-1}(2vK)=\mg{k}\sq{K}$.
If $\G(\rho)\not\subseteq \mg{k}\sq{K}$, then $v(\G(\rho))\notin 2vK$, and it follows by \Cref{L:hens-rigid-simfactors} that $\rho_L$ is hyperbolic for some quadratic field  extension $L/K$ and further by
\Cref{T:2henselian-odd-simfactor} that $\G(\rho)=\H(\rho)$.

Assume now that $\G(\rho)\subseteq \mg{k}\sq{K}$. Then $v(\G(\rho))\subseteq 2vK$.
Since $v$ is henselian and $\mg{\mc{O}}_v=\mg{k}+\mfm_v$, we have 
that $\G(\varphi_K)=\sq{K}\G(\varphi)$ for every non-hyperbolic quadratic form $\varphi$ over $k$.
Since $\rho$ is not hyperbolic, the forms $\varphi_1,\dots,\varphi_r$  over $k$ are not all hyperbolic.
Since $\G(\rho)\subseteq \mg{k}\sq{K}$, it follows that 
$$\G(\rho)=\G((\varphi_1)_K,\dots,(\varphi_r)_K)=\sq{K}\G(\varphi_1,\dots,\varphi_r)\hspace{1cm}\,.$$
It further follows by \Cref{C:equimodular-dec}
that $$\H(\rho)  = \sq{K} \H((\varphi_1)_K,\ldots,(\varphi_r)_K) = \sq{K} \H(\varphi_1,\ldots,\varphi_r)\,.$$
Since $\mg{k}\cap \sq{K}=\sq{k}$, we conclude that 
\begin{eqnarray*}
\hspace{2cm}\G(\rho)/\H(\rho) & \simeq & \G(\varphi_1,\dots,\varphi_r)/\H(\varphi_1,\dots,\varphi_r). \hspace{3cm}\qedhere
\end{eqnarray*}
\end{proof}

In \cite{Gil97}, examples have been constructed of fields allowing for an $8$-dimensional quadratic form $\varphi$ with  trivial discriminant and such that ${\bf PSim}^+(\varphi)$ is not rationally connected.
However, all such examples known to us from the literature are such that $\ind(\mc{C}(\varphi))=8$.
Here, we provide a different construction, which leads to examples where $\vf$ is a sum of two scaled $2$-fold Pfister forms, hence in particular such that
$\ind(\mc{C}(\varphi))=4$.

\begin{thm}\label{8I2index4ex}
    Let $K=k(\!(t_1)\!)(\!(t_2)\!)$ for a field $k$ with $\car(k)\neq 2$ and
     let $a_0,a_1,a_2\in\mg{k}$ be such that $[k(\sqrt{a_0},\sqrt{a_1},\sqrt{a_2}):k]=8$.
     Consider the $8$-dimensional quadratic form 
     $\vf=\lla t_1,t_2\rra\perp -a_0\lla a_1t_1,a_2t_2\rra$ over $K$.
     Then $\G(\vf)/\H(\vf)\simeq \Lambda(k(\sqrt{a_0},\sqrt{a_1},\sqrt{a_2})/k)$ and ${\bf PSim}^+(\vf)$ is not rationally connected.
\end{thm}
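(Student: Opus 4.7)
The plan is to decompose $\vf$ along the henselian valuation on $K$ furnished by \Cref{P:itpowser-hensval} (with residue field $k$ and value group $\zz^2$), invoke \Cref{GH-glued-families} to reduce the computation of $\G(\vf)/\H(\vf)$ to data over $k$, identify that data with $\Lambda(L/k)$ where $L=k(\sqrt{a_0},\sqrt{a_1},\sqrt{a_2})$, and finally exploit \Cref{Sivatski-triquatshift} to conclude non-rational-connectedness. Expanding both Pfister forms and collecting diagonal entries by valuation coset yields the equivalent diagonalisation
$$\vf \simeq \lla a_0\rra \perp -t_1\lla a_0a_1\rra \perp -t_2\lla a_0a_2\rra \perp t_1t_2\lla a_0a_1a_2\rra\,,$$
which exhibits $\vf$ in the shape $\bigperp_{I\in\mc{P}(\{1,2\})} t_I(\varphi_I)_K$ required by \Cref{GH-glued-families}, with the four summands being (scaled) $1$-fold Pfister forms over $k$.

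The hardest step will be the verification that $\G(\vf)\subseteq \mg{k}\sq{K}$. By \Cref{L:hens-rigid-simfactors}, this reduces to showing that $\vf_{K(\sqrt{u})}$ is non-hyperbolic for every $u\in\mg{K}$ with $v(u)\notin 2vK$. Up to $\sq{K}$, such a $u$ lies in $c\cdot\{t_1,t_2,t_1t_2\}$ for some $c\in\mg{k}$, and the extension $K(\sqrt{u})/K$ is totally ramified with residue field $k$. In each of the three cases, the four equimodular summands of $\vf$ collapse over $K(\sqrt{u})$ into two $4$-dimensional equimodular pieces defined over $k$; Durfee's theorem then reduces hyperbolicity of $\vf_{K(\sqrt{u})}$ to hyperbolicity of these pieces over $k$. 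A direct discriminant computation shows that one such piece has discriminant $a_1$, $a_2$, or $a_1a_2$ respectively. The hypothesis $[k(\sqrt{a_0},\sqrt{a_1},\sqrt{a_2}):k]=8$ ensures that none of these classes is a square in $k$, blocking hyperbolicity in every case.

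With $\G(\vf)\subseteq\mg{k}\sq{K}$ established, \Cref{GH-glued-families} delivers $\G(\vf)/\H(\vf)\simeq \G(\varphi_\emptyset,\varphi_{\{1\}},\varphi_{\{2\}},\varphi_{\{1,2\}})/\H(\varphi_\emptyset,\varphi_{\{1\}},\varphi_{\{2\}},\varphi_{\{1,2\}})$. The numerator is the intersection of the four norm groups $\N^\ast_{k(\sqrt{a})/k}$ for $a\in\{a_0,a_0a_1,a_0a_2,a_0a_1a_2\}$; by the biquadratic norm trick (\Cref{biquatrick}) applied to the pairs $(a_0,a_0a_1)$, $(a_0,a_0a_2)$ and $(a_0,a_0a_1a_2)$, this intersection already lies in $\N^\ast_{k(\sqrt{a_1})/k}\cap\N^\ast_{k(\sqrt{a_2})/k}\cap\N^\ast_{k(\sqrt{a_1a_2})/k}$, covering the three remaining quadratic subextensions of $L/k$, and therefore equals $\widetilde{\N}^\ast_{L/k}$. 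For the denominator, noting that the four $1$-fold Pfister forms hyperbolise simultaneously over an extension $M/k$ precisely when $L\subseteq M$ gives $\H((\varphi_I)_I)=\sq{k}\cdot\N^\ast_{L/k}$. Hence $\G(\vf)/\H(\vf)\simeq\Lambda(L/k)$, matching what \Cref{Tignol-group-Gille-translation} gives for the family $(\lla a_0\rra,\lla a_1\rra,\lla a_2\rra)$.

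For the non-rational-connectedness statement, \Cref{Sivatski-triquatshift} furnishes a field extension $k'/k$ such that $[k'(\sqrt{a_0},\sqrt{a_1},\sqrt{a_2}):k']=8$ and $\Lambda(k'(\sqrt{a_0},\sqrt{a_1},\sqrt{a_2})/k')\neq 0$. The whole argument above applies verbatim over $k'$, with $K'=k'(\!(t_1)\!)(\!(t_2)\!)\supseteq K$, yielding $\G(\vf_{K'})/\H(\vf_{K'})\simeq\Lambda(k'(\sqrt{a_0},\sqrt{a_1},\sqrt{a_2})/k')\neq 0$. \Cref{Mer:T1} then implies that ${\bf PSim}^+(\vf)$ is not rationally connected.
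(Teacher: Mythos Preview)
Your proof is correct and follows the same strategy as the paper: decompose $\vf$ into equimodular pieces over the henselian valuation, apply \Cref{GH-glued-families} together with \Cref{Tignol-group-Gille-translation} to identify $\G(\vf)/\H(\vf)$ with $\Lambda(L/k)$, and then invoke \Cref{Sivatski-triquatshift} and \Cref{Mer:T1}. Two points are worth noting: your diagonalisation $\vf\simeq\lla a_0\rra\perp-t_1\lla a_0a_1\rra\perp-t_2\lla a_0a_2\rra\perp t_1t_2\lla a_0a_1a_2\rra$ corrects a small slip in the paper's proof (which writes $\lla a_1\rra$ and $\lla a_2\rra$ in the middle slots), though both tuples of binary forms generate the same triquadratic extension and hence yield the same $\G$ and $\H$ over $k$; more substantively, you spell out via the discriminant computation why $\G(\vf)\subseteq\mg{k}\sq{K}$, i.e.\ why the second alternative in \Cref{GH-glued-families} does not occur---a verification the paper's proof passes over in silence.
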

\begin{proof}
     Note that $\vf=\lla a_0\rra\perp -t_1\lla a_1\rra\perp -t_2\lla a_2\rra\perp t_1t_2\lla a_0a_1a_2\rra$.
    Since $a_0a_1a_2\in\sq{k(\sqrt{a_0},\sqrt{a_1},\sqrt{a_2})}$, it follows by \Cref{Tignol-group-Gille-translation} and \Cref{GH-glued-families} that $\Lambda(k(\sqrt{a_0},\sqrt{a_1},\sqrt{a_2})/k)\simeq \G(\vf)/\H(\vf)$.
    It follows from \Cref{Sivatski-triquatshift} that there exists a field extension $k'/k$ linearly disjoint to $k(\sqrt{a_0},\sqrt{a_1},\sqrt{a_2})/k$ such that $\Lambda(k'(\sqrt{a_0},\sqrt{a_1},\sqrt{a_2})/k')\neq 0$.
    Replacing $K$ by $K'=k'(\!(t_1)\!)(\!(t_2)\!)$ in the first part, we conclude that $\G(\vf_{K'})\neq \H(\vf_{K'})$.
    Therefore ${\bf PSim}^+(\vf)$ is not rationally connected, in view of \Cref{Mer:T1}.
\end{proof}

\begin{cor}\label{8I2-followup}
    Let $k$ be a field with $\car(k)\neq 2$ and $|\scg{k}|\geq 8$. Then over $k(t_1,t_2)$, there exists an $8$-dimensional quadratic form $\vf$ which decomposes as a sum of two scaled $2$-fold Pfister froms such that ${\bf PSim}^+(\vf)$ is not rationally connected.
\end{cor}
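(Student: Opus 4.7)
The plan is to derive this corollary as a direct consequence of \Cref{8I2index4ex} by a base-change argument. First, I would use the hypothesis $|\scg{k}|\geq 8$ to select elements $a_0,a_1,a_2\in\mg{k}$ whose square classes are $\mathbb{F}_2$-linearly independent in $\scg{k}$; this guarantees the triquadratic extension condition $[k(\sqrt{a_0},\sqrt{a_1},\sqrt{a_2}):k]=8$. Setting $K=k(t_1,t_2)$, I would then define the quadratic form
\[
\vf=\lla t_1,t_2\rra\perp -a_0\lla a_1t_1,a_2t_2\rra
\]
over $K$, which by construction is $8$-dimensional and is a sum of two scaled $2$-fold Pfister forms.

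The key step is to transfer non-rational-connectedness from an iterated power series context back to $K$. I would use the canonical embedding $K=k(t_1,t_2)\hookrightarrow \tilde K=k(\!(t_1)\!)(\!(t_2)\!)$. Since $a_0,a_1,a_2\in\mg{k}$ still satisfy the triquadratic independence condition over $k$, \Cref{8I2index4ex} applies over $\tilde K$: its proof, which relies on \Cref{Sivatski-triquatshift}, furnishes a field extension $K''/\tilde K$ for which $\G(\vf_{K''})\neq \H(\vf_{K''})$.

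Since $K''$ is also a field extension of $K$, Merkurjev's criterion (\Cref{Mer:T1}) immediately implies that ${\bf PSim}^+(\vf)$ is not rationally connected over $K=k(t_1,t_2)$. The main subtlety is simply making explicit that Merkurjev's criterion is quantified over \emph{all} field extensions, so it does not matter that the witness $K''$ factors through $\tilde K$ rather than being produced directly over $K$; no genuine obstacle arises beyond this observation, as all the substantive work has been carried out in \Cref{8I2index4ex} and the Sivatski shifting statement.
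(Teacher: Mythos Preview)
Your proposal is correct and follows essentially the same approach as the paper's proof: choose $a_0,a_1,a_2\in\mg{k}$ giving a triquadratic extension, define the same form $\vf$ over $k(t_1,t_2)$, pass to the iterated power series field $k(\!(t_1)\!)(\!(t_2)\!)$, and invoke \Cref{8I2index4ex} together with \Cref{Mer:T1}. The only difference is that the paper simply cites the conclusion of \Cref{8I2index4ex} (non-rational-connectedness over the power series field), whereas you unpack its proof to exhibit the witnessing extension $K''$; both amount to the same observation that rational connectedness, via Merkurjev's criterion, is stable under restriction of the base field.
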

\begin{proof}
       The hypothesis on $k$ means that there exist elements $a_0,a_1,a_2\in \mg{k}$ such that $[k(\sqrt{a_0},\sqrt{a_1},\sqrt{a_2}):k]=8$ holds.
    Consider now the quadratic form 
    $\vf=\lla t_1,t_2\rra\perp -a_0\lla a_1t_1,a_2t_2\rra$ over $k(t_1,t_2)$.
    By extending $\vf$ to $k(\!(t_1)\!)(\!(t_2)\!)$, we conclude from \Cref{8I2index4ex} that ${\bf PSim}^+(\vf)$ is not rationally connected.
\end{proof}

The following example sharpens the main result of \cite{Gil97}.

\begin{ex}\label{Gille-cd3examples-simplified}
    Consider an arbitrary field $k$ with $\car(k)\neq 2$ and the rational function field in three variables $K=k(t_0,t_1,t_2)$.
    Since $|\scg{k(t_0)}|=\infty$, it follows by \Cref{8I2-followup} that  over $K$ there exist $8$-dimensional quadratic forms $\vf$ with trivial discriminant and such that ${\bf PSim}^+(\vf)$ is not rationally connected.
    This holds in particular when $k$ is algebraically closed.
    In that situation, however, we have additionally that 
    $K$ has cohomological dimension $3$ and $u$-invariant $8$.
\end{ex}

\begin{rem}\label{u}
    We refer to \cite[Chap.~8]{Pfi95} for a discussion of the $u$-invariant of a field, and more particularly to \cite[Chap.~8, Ex.~1.2 (6)]{Pfi95} for the fact that, for $n\in\nat$, a rational function field in $n$ variables over an algebraically closed field has $u$-invariant $2^n$.
\end{rem}
    
\begin{thm}\label{In-fork-construction}
    Let 
    $K=k(\!(t_1)\!)\ldots(\!(t_n)\!)$ for a field $k$ with $\car(k)\neq 2$ and some $n\in\nat$.
    Let $a_1,\dots,a_
    n\in\mg{k}$ be such that $[k(\sqrt{a_1},\dots,\sqrt{a_n}):k]=2^n$.
    Let $\mc{P}=\mc{P}(\{1,\dots,n\})\setminus\{\emptyset\}$ and 
    $$\varphi=\bigperp_{I\in \mc{P}}  t_I\lla a_I\rra\,.$$
    Then $\varphi\in\I^nK$, $\dim(\varphi)=2^{n+1}-2$ and $$\G(\varphi)/\H(\varphi)\simeq \Lambda(k(\sqrt{a_1},\dots,\sqrt{a_n})/k)\,.$$
\end{thm}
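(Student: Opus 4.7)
The plan for \Cref{In-fork-construction} proceeds in three stages.

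First, the dimension count is immediate: since $|\mc{P}|=2^n-1$ and each summand $t_I\lla a_I\rra$ is binary, we get $\dim(\vf)=2(2^n-1)=2^{n+1}-2$. For the claim $\vf\in\I^nK$, expand the two $n$-fold Pfister forms
$$\lla -t_1,\dots,-t_n\rra \,\simeq\, \bigperp_{I\subseteq\{1,\dots,n\}}\la t_I\ra \qquad\text{and}\qquad \lla -a_1t_1,\dots,-a_nt_n\rra \,\simeq\, \bigperp_I\la a_It_I\ra\,.$$
Their Witt-class difference equals $\sum_I[\la t_I,-a_It_I\ra]=\sum_I[t_I\lla a_I\rra]$ in $\W K$; the $I=\emptyset$ contribution $\la 1,-1\ra$ is hyperbolic, so $[\vf]$ is a difference of two classes in $\I^nK$ and hence lies in $\I^nK$.

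Set $L=k(\sqrt{a_1},\dots,\sqrt{a_n})$. The plan for the isomorphism $\G(\vf)/\H(\vf)\simeq\Lambda(L/k)$ is to apply \Cref{GH-glued-families} to the presentation $\vf=\bigperp_{I\in\mc{P}}t_I(\lla a_I\rra)_K$. Once the hypothesis $\G(\vf)\subseteq\mg{k}\sq{K}$ is verified, \Cref{GH-glued-families} delivers $\G(\vf)/\H(\vf)\simeq\G((\lla a_I\rra)_{I\in\mc{P}})/\H((\lla a_I\rra)_{I\in\mc{P}})$, and then \Cref{Tignol-group-Gille-translation} applied to the family $(\lla a_I\rra)_{I\in\mc{P}}$ over $k$---whose associated multiquadratic extension is $k(\sqrt{a_I}:I\in\mc{P})=L$---identifies the right-hand quotient with $\Lambda(L/k)$.

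The main obstacle is the verification that $\G(\vf)\subseteq\mg{k}\sq{K}$. Take the henselian valuation $v$ on $K$ provided by \Cref{P:itpowser-hensval}: $vK\simeq(\zz^n,\leq_{\mathsf{lex}})$, residue field $k$, and $\mg{\mc{O}}_v=\mg{k}+\mfm_v$. Every class in $\mg{K}/\sq{K}$ is represented by some $ct_J$ with $c\in\mg{k}$ and $J\subseteq\{1,\dots,n\}$. Suppose for contradiction that $\G(\vf)$ contains an element in the square-class of $ct_J$ with $J\neq\emptyset$; then $\lla ct_J\rra\otimes\vf$ is hyperbolic. The summand indexed by $I\in\mc{P}$ splits as
$$\lla ct_J\rra\otimes t_I\lla a_I\rra \,\simeq\, t_I\lla a_I\rra\,\perp\, t_It_J\la -c,ca_I\ra\,,$$
and the two halves sit in valuation classes $v(t_I)$ and $v(t_{I\triangle J})$ modulo $2vK$, respectively. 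At the valuation class $M=\emptyset$, the left half requires $I=\emptyset$ (excluded), while the right half requires $I=J$; only the latter occurs, and after absorbing the square factor $t_J^2$ the $v$-unimodular residue over $k$ is $\la -c,ca_J\ra\simeq -c\lla a_J\rra$. By Durfee's theorem (cf.~\cite[Theorem 3.11]{BDM25}), this residue must be hyperbolic over $k$, forcing $a_J\in\sq{k}$; but this is incompatible with $[L:k]=2^n$ for $J\neq\emptyset$. Hence $J=\emptyset$, as desired.
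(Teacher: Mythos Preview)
Your argument is correct and follows the paper's overall architecture: establish $\vf\in\I^nK$ via the identity $\vf\perp\hh\simeq\lla -t_1,\dots,-t_n\rra\perp -\lla -a_1t_1,\dots,-a_nt_n\rra$, then feed the decomposition $\vf=\bigperp_{I\in\mc{P}}t_I\lla a_I\rra$ into \Cref{GH-glued-families} and identify the resulting quotient via \Cref{Tignol-group-Gille-translation}.

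The one genuine divergence is in how you verify the hypothesis $\G(\vf)\subseteq\mg{k}\sq{K}$ of \Cref{GH-glued-families}. The paper does this indirectly: it first observes that each $\lla a_I\rra$ is anisotropic over $k$ (since $a_I\notin\sq{k}$), hence $\vf$ is anisotropic over $K$; then it invokes the dichotomy built into \Cref{GH-glued-families}, noting that the alternative branch would force $\vf_L$ to be hyperbolic for some quadratic extension $L/K$, which is impossible because $\vf\in\I^2K$ and $\dim(\vf)\equiv 2\bmod 4$. You instead compute directly: assuming $ct_J\in\G(\vf)$ with $J\neq\emptyset$, you isolate the valuation-class-$\emptyset$ piece of $\lla ct_J\rra\otimes\vf$, find it equals $-c\lla a_J\rra$, and derive the contradiction $a_J\in\sq{k}$ from Durfee's theorem. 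The paper's route is slicker because it reuses the dichotomy already packaged in \Cref{GH-glued-families}, avoiding any bookkeeping of residue classes; your route is more hands-on but entirely self-contained and does not rely on the $\I^2$-plus-dimension obstruction.
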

\begin{proof}
     It is easy to see that $\varphi\perp\hh\simeq \lla -t_1,\dots,-t_n\rra\perp - \lla -a_1t_1,\dots,-a_nt_n\rra$. Hence 
    $\varphi\in\I^nK$ and $\dim(\varphi)=2^{n+1}-2$.
    Since $[k(\sqrt{a_1},\dots,\sqrt{a_n}):k]=2^n$, we have for every $I\in\mc{P}$ that 
    $a_I\notin\sq{k}$, whereby $\lla a_I\rra$ is anisotropic over $k$.
    From this it follows that the quadratic form $\vf$ over $K$ is anisotropic.
    Since $\dim(\varphi)\equiv 2\bmod 4$ and $\varphi\in\I^2K$, it follows that $\varphi_L$ is not hyperbolic for any quadratic field extension $L/K$.
    We conclude by \Cref{Tignol-group-Gille-translation} and \Cref{GH-glued-families} that $\G(\varphi)/\H(\varphi)\simeq \Lambda(k(\sqrt{a_1},\dots,\sqrt{a_n})/k)$.
\end{proof}

\begin{cor}\label{In-ex}
    Let $n\in\nat$ with $n\geq 3$ and $K=k(t_0,\dots,t_n)$ for a field $k$ with $\car(k)\neq 2$ and $|\scg{k}|\geq 4$.
    There exists a quadratic form $\varphi\in\I^nK$ with $\dim(\varphi)=2^{n+1}-2$ and such that ${\bf PSim}^+(\varphi)$ is not rationally connected.
\end{cor}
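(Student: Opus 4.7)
The plan is to combine \Cref{Tignol-Sivatski-multiquad-nontriv}, which provides a multiquadratic extension with nontrivial $\Lambda$-group, with \Cref{In-fork-construction}, which yields a concrete quadratic form over an iterated power-series field realising this $\Lambda$-group as $\G/\H$, and then to conclude via Merkurjev's criterion \Cref{Mer:T1}.

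First, using $|\scg{k}|\geq 4$, I would fix $a_1,a_2\in\mg{k}$ with $[k(\sqrt{a_1},\sqrt{a_2}):k]=4$. I would then apply \Cref{Tignol-Sivatski-multiquad-nontriv} with base field $k$, variable $t_0$, and $r=n$ to obtain elements $a_3,\ldots,a_n\in\mg{k(t_0)}$ (of the form $t_0^2-4dc_i^2$ with $d$ a biquadratic-norm witness and $c_i\in\mg{k}$) such that the multiquadratic extension $L=k(t_0)(\sqrt{a_1},\ldots,\sqrt{a_n})$ of $k(t_0)$ has degree $2^n$ and satisfies $\Lambda(L/k(t_0))\neq 0$.

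Next, I would apply \Cref{In-fork-construction} with base field $k(t_0)$, the data $a_1,\ldots,a_n$, and iterated power-series variables $t_1,\ldots,t_n$. Setting $\hat{K}=k(t_0)(\!(t_1)\!)\ldots(\!(t_n)\!)$ and $\mc{P}=\mc{P}(\{1,\ldots,n\})\setminus\{\emptyset\}$, one obtains the quadratic form $\varphi=\bigperp_{I\in\mc{P}}t_I\lla a_I\rra$ satisfying $\varphi\in\I^n\hat{K}$, $\dim(\varphi)=2^{n+1}-2$, and $\G(\varphi_{\hat{K}})/\H(\varphi_{\hat{K}})\simeq\Lambda(L/k(t_0))\neq 0$. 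As every coefficient of $\varphi$ lies in $k(t_0)(t_1,\ldots,t_n)=K$, the form $\varphi$ is defined over $K$; and since $\G(\varphi_{\hat{K}})\neq\H(\varphi_{\hat{K}})$ for the extension $\hat{K}/K$, \Cref{Mer:T1} gives that ${\bf PSim}^+(\varphi)$ is not rationally connected.

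The main obstacle I anticipate is arranging Sivatski's intersection hypothesis $\N_{k(\sqrt{a_1})/k}^\ast\cap\N_{k(\sqrt{a_2})/k}^\ast\not\subseteq\sq{k}$ under only the assumption $|\scg{k}|\geq 4$. For most fields this holds for well-chosen $a_1,a_2$ via explicit biquadratic norms such as $(1+a_1-a_2)^2-4a_1$, but it can fail for quadratically rigid fields like $\mathbb{Q}_p$ (odd $p$), where local class field theory forces all biquadratic norms to be squares. In those cases I would first replace $k$ by a harmless scalar extension $k(s)$ for a new variable $s$, where the biquadratic norm $(s^2+a_1-a_2)^2-4a_1s^2$ gives a non-square witness, carry out the Sivatski construction over $k(s)$, and exploit the fact that \Cref{Mer:T1} only needs $\G\neq\H$ over \emph{some} extension of $K$, which can absorb the auxiliary variable $s$.
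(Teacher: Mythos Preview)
Your plan coincides with the paper's: apply \Cref{Tignol-Sivatski-multiquad-nontriv} over $k(t_0)$ to obtain $a_1,\dots,a_n\in k(t_0)^\times$ with $\Lambda\neq 0$, feed these into \Cref{In-fork-construction} with $t_1,\dots,t_n$ as the power-series variables, and invoke \Cref{Mer:T1}. The paper's write-up is equally silent about the norm hypothesis $\N_{k(\sqrt{a_1})/k}^\ast\cap\N_{k(\sqrt{a_2})/k}^\ast\not\subseteq\sq{k}$; it simply cites the proposition without verifying it. So you have flagged a genuine point that the paper itself does not address, and your diagnosis that the hypothesis can fail under $|\scg{k}|\geq 4$ alone (e.g.\ for $k=\mathbb{Q}_p$ with $p$ odd, where the Hilbert pairing on $\scg{k}\cong(\mathbb{Z}/2)^2$ is nondegenerate, forcing the intersection of any two distinct norm groups into $\sq{k}$) is correct.

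Your proposed repair, however, does not close the gap. Adjoining a fresh variable $s$ and taking $d=(s^2+a_1-a_2)^2-4a_1s^2$ does give a non-square biquadratic-norm witness over $k(s)$, but Sivatski's construction then produces $a_3=t_0^2-4d,\dots,a_n\in k(s,t_0)$, so the form $\varphi=\bigperp_I t_I\lla a_I\rra$ has coefficients in $k(s,t_0,\ldots,t_n)$, a rational function field in $n+2$ variables over $k$, not in $K=k(t_0,\ldots,t_n)$. Merkurjev's criterion lets you pass to an extension $K'/K$ to test $\G\neq\H$, but it does not let the \emph{form} acquire an extra parameter: the corollary demands $\varphi$ be defined over $K$, and once $a_3,\ldots,a_n$ involve $s$ you have left $K$. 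The variable budget is tight---$n$ variables for the fork, one for Sivatski's $t$---and there is no slot left for $s$.
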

\begin{proof}
    Set $k'=k(t_0)$. According to \Cref{Tignol-Sivatski-multiquad-nontriv}, there exist $a_1,\dots,a_n\in\mg{k'}$ such that 
    $[k'(\sqrt{a_1},\dots,\sqrt{a_n}):k']=2^n$ and $\Lambda(k'(\sqrt{a_1},\dots,\sqrt{a_n})/k')\neq 0$.
    Let $\mc{P}=\mc{P}(\{1,\dots,n\})\setminus\{\emptyset\}$ and 
    $$\varphi=\bigperp_{I\in \mc{P}}  t_I\lla a_I\rra\,.$$
    Then $\varphi\in\I^nK$, $\dim(\varphi)=2^{n+1}-2$.
    Furthermore, letting $K'=k'(\!(t_0)\!)\ldots(\!(t_n)\!)$, it follows 
    by \Cref{In-fork-construction} that 
    $$\G(\varphi_{K'})/\H(\varphi_{K'})\simeq \Lambda(k'(\sqrt{a_1},\dots,\sqrt{a_n})/k')\neq 0\,.$$
    Hence, according to \Cref{Mer:T1}, ${\bf PSim}^+(\varphi)$ is not rationally connected.
\end{proof}

\begin{ex}\label{E:In-PSim-nrc}
    Taking $k=k_0(t)$ for an algebraically closed field $k_0$ with $\car(k_0)\neq 2$ and a variable $t$, we obtain by \Cref{In-ex} a field $K$ of cohomological dimension $n+2$ with $u$-invariant $2^{n+2}$ and a quadratic form $\varphi\in\I^nK$ of dimension $2^{n+1}-2$ such that ${\bf PSim}^+(\varphi)$ is not rationally connected.
    \end{ex}
    
\begin{cor}\label{14I3ex}
Let $K=k(t_1,t_2,t_3)$ for a field $k$ with $\car(k)\neq 2$ and $|\scg{k}|\geq 4$.
    Then there exists a $14$-dimensional quadratic form $\varphi\in\I^3K$ such that ${\bf PSim}^+(\varphi)$ is not rationally connected.
\end{cor}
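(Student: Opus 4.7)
The plan is to derive this corollary by adapting the proof of \Cref{In-ex}, replacing \Cref{Tignol-Sivatski-multiquad-nontriv} (which needs an auxiliary variable to construct a triquadratic extension with nontrivial $\Lambda$-group) by \Cref{Sivatski-triquatshift} (which only requires a given triquadratic extension). This saves one variable, since a triquadratic extension of $K = k(t_1,t_2,t_3)$ can be obtained without introducing an extra $t_0$. Concretely, by $|\scg{k}| \geq 4$ I would pick $a_1, a_2 \in \mg{k}$ with $[k(\sqrt{a_1},\sqrt{a_2}):k] = 4$, and I would choose $a_3 \in \mg{K}$ so that $a_1,a_2,a_3$ generate a triquadratic extension $L/K$ of degree $8$. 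A natural candidate is $a_3 = t_1 + c$ for some $c \in \mg{k}$, whose square class in $\scg{K}$ lies outside the subgroup generated by $\{a_1,a_2,t_1,t_2,t_3\}$ (witnessed by the $(t_1+c)$-adic valuation). By \Cref{Sivatski-triquatshift}, there then exists a field extension $K'/K$ with $[LK':K'] = 8$ and $\Lambda(LK'/K') \neq 0$.

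The form I would use is $\varphi = \bigperp_{I \in \mc{P}(\{1,2,3\})\setminus\emptyset} t_I \lla a_I\rra$ over $K$, following the formula of \Cref{In-fork-construction} applied with parameters $a_1,a_2,a_3$. The identity $\varphi \perp \hh \simeq \lla -t_1,-t_2,-t_3\rra \perp -\lla -a_1 t_1,-a_2 t_2,-a_3 t_3\rra$ places $\varphi$ in $\I^3 K$, and the independence of $a_3$'s square class from the others ensures that the two $3$-fold Pfister forms on the right share only the trivial common subform $\la 1\ra$; their Witt difference is therefore of dimension $14$ and anisotropic, so $\dim\varphi = 14$. To identify $\G(\varphi_{K'})/\H(\varphi_{K'})$ with $\Lambda(LK'/K')$, one would pass $\varphi$ to a suitable henselization of $K'$ where \Cref{GH-glued-families} applies and combine this with \Cref{Tignol-group-Gille-translation}, exactly as in the proofs of \Cref{In-fork-construction} and \Cref{8I2index4ex}. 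Since $\Lambda(LK'/K')\neq 0$, \Cref{Mer:T1} yields that ${\bf PSim}^+(\varphi)$ is not rationally connected.

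The main obstacle I foresee is the dimension verification. Naive choices for $a_3$ (such as $a_3 = t_1$) result in the two $3$-fold Pfister forms sharing a common $1$-fold sub-Pfister (for example $\lla t_1 t_3\rra$ when $a_3 = t_1$), so that the Witt-equivalence class of $\varphi$ drops to dimension $12$; by \Cref{T:Karpenko} such a form has height at most $2$, and by \Cref{h2ratcon} its group ${\bf PSim}^+$ would then be rationally connected, contradicting the goal. The choice $a_3 = t_1 + c$, whose square class is genuinely new in $\scg{K}$ (beyond the lattice generated by $\scg{k}$ and the $t_i$'s), is what circumvents this cancellation and preserves the independence of the two $3$-fold Pfister forms.
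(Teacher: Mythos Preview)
Your proposal has a genuine structural gap, and it is not the dimension issue you flag. The machinery behind \Cref{In-fork-construction} and \Cref{GH-glued-families} rests on a clean separation between ``residue-field data'' (the elements $a_i$) and ``valuation data'' (the uniformizers $t_i$): one needs an iterated power-series field $\ell(\!(t_1)\!)\cdots(\!(t_n)\!)$ in which each $\lla a_I\rra$ is defined over the residue field $\ell$, and the conclusion identifies $\G(\rho)/\H(\rho)$ with a $\Lambda$-group computed over $\ell$. By taking $a_3=t_1+c\in K\setminus k$ and then invoking \Cref{Sivatski-triquatshift} over $K$, you obtain an abstract extension $K'/K$ with $\Lambda(LK'/K')\neq 0$, but this $K'$ carries no controlled valuation structure: the elements $t_1,t_2,t_3$ already lie in $K'$, so there is no iterated power-series field $K'(\!(t_1)\!)\cdots$ to pass to, and for an arbitrary henselian valuation on (an extension of) $K'$ there is no reason the residues of $a_1,a_2,a_3$ should generate a triquadratic extension of the residue field with nontrivial $\Lambda$-group. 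Concretely, if you henselize $K$ itself at $(t_1,t_2,t_3)$, then $a_3=t_1+c\in c\sq{K}$ collapses to the square class of $c\in k$, and when $|\scg{k}|=4$ the extension $k(\sqrt{a_1},\sqrt{a_2},\sqrt{c})/k$ has degree at most $4$, so \Cref{In-fork-construction} does not even apply. There is thus no route from $\Lambda(LK'/K')\neq 0$ to $\G(\varphi_{K''})\neq\H(\varphi_{K''})$ for any extension $K''/K$.

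The paper's proof avoids this by choosing all of $a_1,a_2,a_3$ inside $k$ and applying \Cref{Sivatski-triquatshift} over $k$ (not over $K$) to obtain $k'/k$ with $\Lambda(k'(\sqrt{a_1},\sqrt{a_2},\sqrt{a_3})/k')\neq 0$. One then passes to $K^\ast=k'(\!(t_1)\!)(\!(t_2)\!)(\!(t_3)\!)$, which is simultaneously an extension of $K=k(t_1,t_2,t_3)$ and an iterated power-series field over $k'$ with $a_1,a_2,a_3\in\mg{k'}$; now \Cref{In-fork-construction} applies verbatim and \Cref{Mer:T1} finishes. Note that picking three independent square classes in $k$ actually requires $|\scg{k}|\geq 8$, so the stated bound $|\scg{k}|\geq 4$ appears to be a slip in the paper; your attempt to genuinely work under the weaker hypothesis by placing $a_3$ in $K$ is well-motivated but, as explained, does not go through.
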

\begin{proof}
    By the hypothesis on $k$, there exist elements $a_1,a_2,a_3\in\mg{k}$ such that $[k(\sqrt{a_1},\sqrt{a_2},\sqrt{a_3}):k]=8$.
    By \Cref{Sivatski-triquatshift}, there exists a field extension $k'/k$ such that $[k'(\sqrt{a_1},\sqrt{a_2},\sqrt{a_3}):k']=8$ and $\Lambda(k'(\sqrt{a_1},\sqrt{a_2},\sqrt{a_3})/k')\neq 0$.
    Let $\mc{P}=\mc{P}(\{1,2,3\})\setminus\{\emptyset\}$ and consider the quadratic form 
    $$\varphi=\bigperp_{I\in\mc{P}} t_I\lla a_I\rra$$
    over $K$.
    Let $K^\ast=k'(\!(t_1)\!)(\!(t_2)\!)(\!(t_3)\!)$.
    It follows by \Cref{In-fork-construction} that 
    $$\G(\varphi_{K^\ast})/\H(\varphi_{K^\ast})\simeq \Lambda(k'(\sqrt{a_1},\sqrt{a_2},\sqrt{a_3})/k')\neq 0\,.$$
    Hence, according to \Cref{Mer:T1}, ${\bf PSim}^+(\varphi)$ is not rationally connected.
\end{proof}

\begin{ex}
    Taking $k=k_0(t_0)$ for an algebraically closed field $k_0$ with $\car(k_0)\neq 2$, we obtain from \Cref{14I3ex} an example of a field $K$ of cohomological dimension $4$ with $u$-invariant $16$ admitting a $14$-dimensional quadratic form $\varphi$ in $\I^3K$ such that ${\bf PSim}^+(\varphi)$ is not rationally connected.
\end{ex}

\bibliographystyle{plain}

\end{document}